\newcounter{satznum}
\newtheorem{theorem}{Theorem}[satznum]
\newcounter{examplenum}
\newtheorem{example}{Example}[examplenum]
\newcounter{lemmanum}
\newtheorem{lemma}{Lemma}[lemmanum]
\newenvironment{remark}
 {\begin{trivlist}\item[]{\bf Remark.}}
 {\end{trivlist}}
\newenvironment{remarks}
 {\begin{trivlist}\item[]{\bf Remarks.}}
 {\end{trivlist}}
\newenvironment{examples}
 {\begin{trivlist}\item[]{\bf Examples.}}
 {\end{trivlist}}
\newenvironment{proof}
 {\begin{trivlist}\item[]{\bf Proof.}}
 {\end{trivlist}}
\gdef\cz{{\mathbb C}} 
\gdef\me{{\mathbb E}} 
\gdef\nz{{\mathbb N}} 
\gdef\pr{{\mathbb P}} 
\gdef\rz{{\mathbb R}} 
\newcounter{todocounter}
\def\@MRExtract#1 #2!{#1}
\newcommand{\MR}[1]{
  \xdef\@MRSTRIP{\@MRExtract#1 !}
  \href{http://www.ams.org/mathscinet-getitem?mr=\@MRSTRIP}{MR\@MRSTRIP}}
\begin{document}
   \section*{ASYMPTOTICS OF CONTINUOUS-TIME DISCRETE STATE SPACE
   BRANCHING PROCESSES FOR LARGE INITIAL STATE}
   {\sc Martin M\"ohle} and {\sc Benedict Vetter}\footnote{Mathematisches
   Institut, Eberhard Karls Universit\"at T\"ubingen, Auf der Morgenstelle 10,
   72076 T\"ubingen, Germany, E-mail addresses: martin.moehle@uni-tuebingen.de,
   benedict.vetter@uni-tuebingen.de}
\begin{center}
   \today
\end{center}
\begin{abstract}
   Scaling limits for continuous-time branching processes with discrete
   state space are provided as the initial state tends to infinity.
   Depending on the finiteness or non-finiteness of the mean and/or the
   variance of the offspring distribution, the limits are in general
   time-inhomogeneous Gaussian processes, time-inhomogeneous
   generalized Ornstein--Uhlenbeck type processes or continuous-state
   branching processes. We also provide transfer results showing how
   specific asymptotic relations for the probability generating
   function of the offspring distribution carry over to those of the
   one-dimensional distributions of the branching process.

   \vspace{2mm}

   \noindent Keywords: Branching process; generalized Mehler semigroup;
   Neveu's continuous-state branching process;
   Ornstein--Uhlenbeck type process; self-decomposability; stable law;
   time-inhomogeneous process; weak convergence

   \vspace{2mm}

   \noindent 2020 Mathematics Subject Classification:
            Primary
            60J80; 
            60F05  
            Secondary
            60F17; 
            60G50; 
            60J27  
\end{abstract}

\subsection{Introduction} \label{intro}
Suppose that the lifetime of each individual in some population is
exponentially distributed with a given parameter $a\in (0,\infty)$
and that at the end of its life each individual gives birth to
$k\in\nz_0:=\{0,1,\ldots\}$ individuals with probability $p_k$,
independently of the rest of the population. Assuming that the population
consists of $n\in\nz:=\{1,2,\ldots\}$ individuals at time $t=0$ we denote
with $Z_t^{(n)}$ the random number of individuals alive at time
$t\ge 0$. The process $Z^{(n)}:=(Z_t^{(n)})_{t\ge 0}$ is a
classical continuous-time branching process with discrete state space
$\nz_0\cup\{\infty\}$ and initial state $Z_0^{(n)}=n$. These processes
have been studied extensively in the literature. For fundamental
properties of these processes we refer the reader to the classical books
of Harris \cite[Chapter V]{harris} and Athreya and Ney
\cite[Chapter III]{athreyaney}. Define $Z_t:=Z_t^{(1)}$ and $Z:=Z^{(1)}$
for convenience. By the branching property, $Z^{(n)}$ is distributed as
the sum of $n$ independent copies of $Z$. The literature thus mainly
focuses on the situation $n=1$ and most results focus on the asymptotic
behavior of these processes as the time $t$ tends to infinity.

In contrast we are interested in the asymptotic behavior of $Z^{(n)}$ as
the initial state $n$ tends to infinity. To the best of the authors
knowledge this question has not been discussed rigorously in the
literature for continuous-time discrete state space branching processes.
Related questions for discrete-time Galton--Watson processes have been
studied extensively in the literature (see for example Lamperti
\cite{lamperti1,lamperti2} or Green \cite{green}), however in this
situation time is usually scaled as well, which make these
approaches different from the continuous-time case. The article of
Sagitov \cite{sagitov1} contains related results, however the
critical case is considered and again an additional time scaling is used.

The asymptotics as the initial state $n$ tends to infinity may in some sense
be viewed as a non-natural question in branching process theory, however this
question has fundamental applications, for example in coalescent theory. It is
well known that the block counting process of any exchangeable coalescent,
restricted to a sample of size $n$, has a Siegmund dual process, called
the fixation line. For the Bolthausen--Sznitman coalescent the fixation line
is (see, for example, \cite{kuklamoehle}) a continuous-time discrete state
space branching process $Z^{(n)}$ with offspring distribution $p_k=1/(k(k-1))$,
$k\in\{2,3\ldots\}$. In this context the parameter $n$ is the sample size
and hence the question about its asymptotic behavior when the sample size
$n$ size becomes large is natural and important. In fact, this example was
the starting point to become interested in the asymptotical behavior of
branching processes for large initial value.

The convergence results are provided in Section \ref{results}. We provide
a convergence result for the finite variance case (Theorem \ref{main1}),
another result for the situation when the process has still finite mean
but infinite variance (Theorem \ref{main2}) and for the situation
when even the mean is infinite but the process still does not explode
in finite time (Theorem \ref{main3}). The limiting processes arising
in Theorem \ref{main1} are (time-inhomogeneous) Gaussian processes whereas
those in Theorem \ref{main2} are (time-inhomogeneous) Ornstein--Uhlenbeck type
processes. In Theorem \ref{main3} continuous-state branching processes arise
in the limit as $n\to\infty$. For all three regimes typical examples
are provided. The basic idea to obtain convergence results
of this form is relatively obvious. Since $Z^{(n)}$ is a sum of $n$ independent
copies of $Z$ we can in principle apply central limit theorems, which
essentially lead to the desired results. We prove not only
convergence of the marginals or the finite-dimensional distributions.
We provide functional limiting results for the sequence of processes
$(Z^{(n)})_{n\in\nz}$. We think that the arising limiting processes are
quite interesting. For example, since the centering or scaling of the space
in Theorem \ref{main1} and Theorem \ref{main2} in general explicitly depends
on the time $t$, the limiting processes are in general time-inhomogeneous.

The convergence results are as well based on crucial transfer results
showing how particular asymptotic relations for the probability
generating function (pgf) of the offspring distribution carry over
to the pgf of $Z_t$. Results of this form are for example provided
in Lemma \ref{lemmaoffspringdist}, Lemma \ref{lemmaminfty} and
Lemma \ref{betatlemma} and are of its own interest. Despite the fact that
the literature on continuous-time branching processes is rather large, we
have not been able to trace these results in the literature.


Throughout the article $\xi$ denotes a random variable taking values in
$\nz_0$ with probability $p_k:=\pr(\xi=k)$, $k\in\nz_0$. For a space $E$
equipped with a $\sigma$-algebra we denote with $B(E)$ the space of all
bounded measurable functions $g:E\to\rz$. For a topological space $X$ and
$K\in\{\rz,\cz\}$ we denote by $\widehat{C}(X,K)$ the space of continuous
functions $g:X\to K$ vanishing at infinity and also write $\widehat{C}(X)$
for $\widehat{C}(X,\rz)$.
\subsection{Results} \label{results}
Let $f$ denote the pgf of $\xi$, i.e.
$f(s):=\me(s^\xi)=\sum_{k\ge 0}p_k s^k$ and define $u(s):=a(f(s)-s)$
for $s\in [0,1]$. Let $r\ge 1$. It is well known (see, for example,
Athreya and Ney \cite[p.~111, Corollary 1]{athreyaney}) that
$m_r(t):=\me(Z_t^r)<\infty$ for all $t>0$ if and only if $\me(\xi^r)
=\sum_{k\ge 0} k^rp_k<\infty$. Moreover $m(t):=m_1(t)=e^{\lambda t}$
with $\lambda:=u'(1-)=a(\me(\xi)-1)$ and
\begin{equation} \label{secondmoment}
   m_2(t)\ =\
   \left\{
   \begin{array}{cl}
      \tau^2\lambda^{-1}e^{\lambda t}(e^{\lambda t}-1)+e^{\lambda t}
                & \mbox{if $\lambda\ne 0$,}\\
      \tau^2t+1 & \mbox{if $\lambda=0$,}
   \end{array}
   \right.
\end{equation}
with $\tau^2:=u''(1-)=af''(1-)=a\me(\xi(\xi-1))$. Note that
(\ref{secondmoment}) slightly corrects Eq.~(5) on p.~109 in \cite{athreyaney},
which accidently provides the formula for the second descending factorial
moment $\me(Z_t(Z_t-1))$ instead of the second moment $\me(Z_t^2)$.
In particular, if $m_2(t)<\infty$, then
\[
\sigma^2(t)\ :=\ {\rm Var}(Z_t)\ =\
\left\{
   \begin{array}{cl}
      (\tau^2-\lambda)e^{\lambda t}(e^{\lambda t}-1)/\lambda & \mbox{if $\lambda\ne 0$,}\\
      \tau^2t & \mbox{if $\lambda=0$.}
   \end{array}
\right.
\]
\subsubsection{The finite variance case}
Assume that the second mean
$\me(\xi^2)=\sum_{k\ge 0} k^2p_k$ of the offspring distribution is finite or,
equivalently, that ${\rm Var}(Z_t)<\infty$ for all $t\ge 0$.
In the following $a\wedge b:=\min\{a,b\}$ denotes the minimum of
$a,b\in\rz$. We furthermore use for $\mu\in\rz$ and $\sigma^2\ge 0$
the notation $N(\mu,\sigma^2)$ for the normal distribution with mean
$\mu$ and variance $\sigma^2$ with the convention that $N(\mu,0)$
is the Dirac measure at $\mu$. Our first fluctuation result (Theorem \ref{main1})
clarifies the asymptotic behavior of $Z_t^{(n)}$ as the initial state $n$
tends to infinity. The proof of Theorem \ref{main1} is provided in
Section \ref{proofs}.
\begin{theorem} \label{main1}
   If $\me(\xi^2)<\infty$ or, equivalently, if $\sigma^2(t)
   :={\rm Var}(Z_t)<\infty$ for all $t\ge 0$, then, as $n\to\infty$,
   the process $X^{(n)}:=(X_t^{(n)})_{t\ge 0}$, defined via
   \begin{equation} \label{main1scaling}
      X_t^{(n)}\ :=\ \frac{Z_t^{(n)}-nm(t)}{\sqrt{n}}
      \ =\ \frac{Z_t^{(n)}-ne^{\lambda t}}{\sqrt{n}},
      \qquad n\in\nz,t\ge 0,
   \end{equation}
   converges in $D_\rz[0,\infty)$ to a continuous Gaussian Markov process
   $X=(X_t)_{t\ge 0}$ with $X_0=0$ and covariance function
   $(s,t)\mapsto {\rm Cov}(X_s,X_t)=\me(X_sX_t)=m(|s-t|)\sigma^2(s\wedge t)$,
   $s,t\ge 0$.
\end{theorem}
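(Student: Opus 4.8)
The plan is to exploit the branching property together with the martingale obtained by deflating $Z$ by its mean, and then to invoke a functional central limit theorem for martingales. By the branching property, $Z_t^{(n)}=\sum_{i=1}^n Z_t^{(i)}$ with $Z^{(1)},Z^{(2)},\ldots$ independent copies of $Z$, so that
\[
   X_t^{(n)}\ =\ \frac{1}{\sqrt n}\sum_{i=1}^n\bigl(Z_t^{(i)}-m(t)\bigr).
\]
Since $m(t)=e^{\lambda t}=\me(Z_t)$ and, by the Markov branching structure, $\me(Z_t\,|\,Z_s)=Z_s\,e^{\lambda(t-s)}$ for $s\le t$, the deflated process $W_t:=e^{-\lambda t}Z_t$ is a mean-one martingale. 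Writing $W_t^{(i)}:=e^{-\lambda t}Z_t^{(i)}$ and $\widetilde X_t^{(n)}:=e^{-\lambda t}X_t^{(n)}$, a direct computation gives
\[
   \widetilde X_t^{(n)}\ =\ \frac{1}{\sqrt n}\sum_{i=1}^n\bigl(W_t^{(i)}-1\bigr),
\]
which is, for each $n$, a square-integrable martingale (a normalized sum of i.i.d.\ centered martingales). I would work with $\widetilde X^{(n)}$ first and recover $X^{(n)}$ at the end through the deterministic rescaling $X_t^{(n)}=e^{\lambda t}\widetilde X_t^{(n)}$.

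Next I would apply a martingale functional central limit theorem (for instance the criteria of Jacod--Shiryaev or Ethier--Kurtz) to the sequence $\widetilde X^{(n)}$, which yields finite-dimensional convergence and tightness simultaneously. Two conditions must be checked. First, the predictable quadratic variation must converge: since $\langle\widetilde X^{(n)}\rangle_t=\frac1n\sum_{i=1}^n\langle W^{(i)}\rangle_t$ is an average of i.i.d.\ terms with common mean $\me(\langle W\rangle_t)={\rm Var}(W_t)=e^{-2\lambda t}\sigma^2(t)=:V(t)$, the weak law of large numbers gives $\langle\widetilde X^{(n)}\rangle_t\to V(t)$ in probability, with $V$ continuous, deterministic and nondecreasing (being the variance of the martingale $W$). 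Finiteness of $V(t)$ here is exactly the hypothesis $\me(\xi^2)<\infty$. Second, a Lindeberg-type negligibility of jumps must hold; the jumps of $\widetilde X^{(n)}$ are of the form $n^{-1/2}e^{-\lambda s}(\xi-1)$ coming from single branching events, and I would show $\me\bigl[\sum_{s\le t}|\Delta\widetilde X_s^{(n)}|^2\mathbf 1\{|\Delta\widetilde X_s^{(n)}|>\varepsilon\}\bigr]\to0$. The limit $\widetilde X$ is then a continuous Gaussian martingale with $\langle\widetilde X\rangle_t=V(t)$, i.e.\ a time-changed Brownian motion.

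Finally I would transfer the convergence back to $X^{(n)}$: multiplication by the fixed continuous function $t\mapsto e^{\lambda t}$ is a continuous map on $D_\rz[0,\infty)$, so $\widetilde X^{(n)}\Rightarrow\widetilde X$ implies $X^{(n)}\Rightarrow X$ with $X_t=e^{\lambda t}\widetilde X_t$. This $X$ inherits continuity, the Gaussian property, the Markov property (a deterministic time-dependent scaling of a Gaussian Markov process), and $X_0=\widetilde X_0=0$. For the covariance, $\langle\widetilde X\rangle_t=V(t)$ gives $\me(\widetilde X_s\widetilde X_t)=V(s\wedge t)$ for $s\le t$, whence
\[
   \me(X_sX_t)\ =\ e^{\lambda(s+t)}V(s\wedge t)\ =\ e^{\lambda(s+t)-2\lambda(s\wedge t)}\sigma^2(s\wedge t)\ =\ m(|s-t|)\,\sigma^2(s\wedge t),
\]
using $s+t-2(s\wedge t)=|s-t|$, as claimed.

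I expect the main obstacle to be tightness, and specifically the verification of the jump (Lindeberg) condition: this is the only place where the second-moment hypothesis $\me(\xi^2)<\infty$ is genuinely indispensable, and it requires controlling the total squared-jump mass of the pure-jump process $W$ through its generator. Secondary technical points are the passage from the compact horizon $[0,T]$ to $[0,\infty)$ (by restriction, since convergence in $D_\rz[0,\infty)$ can be read off from convergence on each $[0,T]$ with $T$ a continuity point of the limit) and, if one prefers to compute $V$ directly rather than via ${\rm Var}(W_t)$, an explicit identification of $\langle W\rangle$.
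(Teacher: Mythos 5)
Your proposal is correct and shares the paper's structural skeleton: both exploit the branching property to write $X_t^{(n)}$ as a normalized i.i.d.\ sum, both deflate by the mean to form the martingales $\widetilde X_t^{(n)}=X_t^{(n)}/m(t)$ (the paper's $M^{(n)}$), and both recover $X^{(n)}$ at the end by multiplying by the continuous deterministic function $m$. Where you genuinely diverge is in how convergence of these martingales is established. The paper first obtains finite-dimensional convergence from the classical multivariate CLT (after computing ${\rm Cov}(Z_s,Z_{s+t})=m(t)\sigma^2(s)$ by conditioning on $Z_s$), and then gets tightness almost for free from Aldous' criterion for martingales, which upgrades fdd convergence to a continuous limit into $D_\rz[0,\infty)$-convergence given only uniform integrability of $\{M_t^{(n)}:n\in\nz\}$ for each fixed $t$ --- immediate here because $\me((M_t^{(n)})^2)=\sigma^2(t)/(m(t))^2$ does not depend on $n$. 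You instead invoke a martingale functional CLT, which requires identifying $\langle W\rangle_t=a\,\me((\xi-1)^2)\int_0^t e^{-2\lambda s}Z_s\,{\rm d}s$ (or at least its expectation), applying the law of large numbers to $\langle\widetilde X^{(n)}\rangle_t$, and checking a Lindeberg condition on the jumps $n^{-1/2}e^{-\lambda s}(\xi-1)$; all of this goes through under $\me(\xi^2)<\infty$ (the expected total squared-jump mass on $[0,T]$ equals $a\,\me((\xi-1)^2)\int_0^T e^{-\lambda s}\,{\rm d}s<\infty$ and dominated convergence handles the truncation), but it costs noticeably more work than Aldous' route. What your approach buys is that fdd convergence and tightness come in one stroke, and the limit is identified intrinsically as a deterministic time-change of Brownian motion, from which the covariance $m(|s-t|)\sigma^2(s\wedge t)$ falls out exactly as you compute. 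One small correction of emphasis: the Lindeberg step is not the unique place where $\me(\xi^2)<\infty$ is indispensable --- it is already needed to make $V(t)$ finite and the quadratic variations integrable, and in the paper's version it enters only through the multivariate CLT and the uniform integrability bound.
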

\begin{remarks}
\begin{enumerate}
   \item[1.] (Continuity of $X$) Let $s,t\ge 0$ and $x\in\rz$. Conditional
      on $X_s=x$ the random variable $X_{s+t}-X_s$ has a normal distribution
      with mean $\mu:=xm(t)-x=x(m(t)-1)$ and variance $v^2:=m(s)\sigma^2(t)$.
      Thus, $\me((X_{s+t}-X_s)^4\,|\,X_s=x)=3v^4+6\mu^2v^2+\mu^4
      =3m^2(s)\sigma^4(t)+6x^2(m(t)-1)^2m(s)\sigma^2(t)+x^4(m(t)-1)^4$ or,
      equivalently,
      \[
      \me((X_{s+t}-X_s)^4\,|\,X_s)\ =\
      3m^2(s)\sigma^4(t) + 6X_s^2(m(t)-1)^2m(s)\sigma^2(t)+X_s^4(m(t)-1)^4.
      \]
      Taking expectation yields
      \begin{eqnarray*}
         &   & \hspace{-15mm}\me((X_{s+t}-X_s)^4)\\
         & = & 3m^2(s)\sigma^4(t)
               + 6\me(X_s^2)(m(t)-1)^2m(s)\sigma^2(t)
               + \me(X_s^4)(m(t)-1)^4\\
         & = & 3m^2(s)\sigma^4(t)
               + 6\sigma^2(s)(m(t)-1)^2m(s)\sigma^2(t)
               + 3\sigma^4(s)(m(t)-1)^4.
      \end{eqnarray*}
      From this formula it follows that for every $T>0$ there exists a
      constant $K=K(T)\in (0,\infty)$ such that $\me((X_s-X_t)^4)\le K(s-t)^2$
      for all $s,t\in [0,T]$. By Kolmogorov's continuity theorem (see, for
      example, Kallenberg \cite[p.~57, Theorem 3.23]{kallenberg}) we can
      therefore assume that $X$ has continuous paths.
   \item[2.] (Generator) For $\lambda\ne 0$ the Gaussian process $X$ is
      time-inhomogeneous. Note that
      $T_{s,t}g(x):=\me(g(X_{s+t})\,|\,X_s=x)=\me(g(xm(t)+\sqrt{m(s)}X_t))$,
      $s,t\ge 0$, $g\in B(\rz)$, $x\in\rz$. Let $C^2(\rz)$ denote the space
      of real valued twice continuously differentiable functions on $\rz$.
      For $s\ge 0$, $g\in C^2(\rz)$ and $x\in\rz$ it follows that
      \[
      A_sg(x)
      \ :=\ \lim_{t\to 0} \frac{T_{s,t}g(x)-g(x)}{t}
      \ =\ \lambda xg'(x) + \frac{\sigma^2}{2}m(s)g''(x),
      \]
      where $\sigma^2:=\lim_{t\to 0}\sigma^2(t)/t=\tau^2-\lambda=a\me((\xi-1)^2)$.
      For $\lambda=0$ (critical case) the process $X$ is a time-homogeneous
      Brownian motion with generator $Ag(x)=(\tau^2/2)g''(x)$, $g\in C^2(\rz)$,
      $x\in\rz$, where $\tau^2=a{\rm Var}(\xi)$.
   \item[3.] (Doob--Meyer decomposition) Define $A:=(A_t)_{t\ge 0}$ via
      $A_t:=\lambda\int_0^t X_s\,{\rm d}s$, $t\ge 0$. Let
      ${\cal F}_t:=\sigma(X_s,s\le t)$, $t\ge 0$. For all $0\le s\le t$,
      \begin{eqnarray*}
      \me(A_t-A_s\,|\,{\cal F}_s)
      & = & \lambda\me\bigg(\int_s^t X_u\,{\rm d}u\,\bigg|\,{\cal F}_s\bigg)
      \ = \ \lambda\int_s^t\me(X_u\,|\,{\cal F}_s)\,{\rm d}u\\
      & = & \lambda\int_s^t m(u-s)X_s{\rm d}u
      \ = \ X_s\int_s^t \lambda e^{\lambda(u-s)}{\rm d}u\\
      & = & X_s(e^{\lambda(t-s)}-1)
      \ = \ X_sm(t-s)-X_s\\
      & = & \me(X_t\,|\,{\cal F}_s)-X_s
      \ = \ \me(X_t-X_s\,|\,{\cal F}_s).
      \end{eqnarray*}
      Thus, $M:=(M_t)_{t\ge 0}:=(X_t-A_t)_{t\ge 0}$ is a martingale with
      respect to the filtration $({\cal F}_t)_{t\ge 0}$. For $\lambda=0$
      the process $X$ itself is hence a martingale. Clearly, $X=M+A$ is
      the Doob--Meyer decomposition of $X$. The process $A$ is not monotone,
      but decomposes into $A=A^+-A^-$, where $A^+:=(A_t^+)_{t\ge 0}$ and
      $A^-:=(A_t^-)_{t\ge 0}$, defined via
      $A_t^+:=\lambda\int_0^t X_s^+\,{\rm d}s$ and
      $A_t^-:=\lambda\int_0^t X_s^-\,{\rm d}s$ for all $t\ge 0$, both have
      non-decreasing paths.
   \item[4.] (Positive semi-definiteness) The limiting process $X$ in
      Theorem \ref{main1} is Gaussian. For any finite number $k$ of time
      points $0\le t_1<\cdots<t_k<\infty$ it follows that
      $(X_{t_1},\ldots,X_{t_k})$ has a multivariate normal distribution
      with positive semi-definite covariance matrix
      $\Sigma:=(\sigma_{i,j})_{i,j\in\{1,\ldots,k\}}$ having entries
      $\sigma_{i,j}={\rm Cov}(X_{t_i},X_{t_j})=m(|t_i-t_j|)\sigma^2(t_i\wedge t_j)$,
      $i,j\in\{1,\ldots,k\}$. For $\lambda=0$ (critical case) it
      follows that the matrix $(t_i\wedge t_j)_{i,j\in\{1,\ldots,k\}}$
      is positive semi-definite. For further properties of such min and max
      matrices and related meet and join matrices we refer the reader
      exemplary to Bhatia \cite{bhatia1,bhatia2}
      and Mattila and Haukkanen \cite{mattilahaukkanen1,mattilahaukkanen2}.
      For $\lambda\ne 0$ (non-critical case)
      it follows that the matrix $(e^{\lambda|t_i-t_j|}
      e^{\lambda(t_i\wedge t_j)}(e^{\lambda(t_i\wedge t_j)}-1)/\lambda)_{i,j\in\{1,\ldots,k\}}$
      is positive semi-definite.
\end{enumerate}
\end{remarks}
\begin{examples}
   (i) Let $\xi$ is geometrically distributed with parameter $p\in (0,1)$. Define $q:=1-p$. Then
   all descending factorial moments $\me((\xi)_j)=j!(q/p)^j$, $j\in\nz_0$,
   are finite. Theorem \ref{main1} is hence applicable with
   $\lambda=a(\me(\xi)-1)=a(q/p-1)$ and $\tau^2=a\me((\xi)_2)=2a(q/p)^2$.
   For $p=1/2$ (critical case) the process $X$ is a Brownian motion
   with generator $Af(x)=af''(x)$, $f\in C^2(\rz)$, $x\in\rz$.

   (ii) If $\xi$ is Poisson distributed with parameter $\mu\in (0,\infty)$,
   then again all descending factorial moments $\me((\xi)_j)=\mu^j$,
   $j\in\nz_0$, are finite. Theorem \ref{main1} is applicable with
   $\lambda=a(\me(\xi)-1)=a(\mu-1)$ and $\tau^2=a\me((\xi)_2)=a\mu^2$.
   For $\mu=1$ (critical case) the process $X$ is a Brownian motion with
   generator $Af(x)=(a/2)\mu^2f''(x)$, $f\in C^2(\rz)$, $x\in\rz$.

   (iii) Let $a_1,a_2\ge 0$ with $a_1+a_2>0$. Theorem \ref{main1} is
   applicable for birth and death processes with rates $na_1$ and $na_2$
   for birth and death respectively if the process is in state $n$. In this
   case we have $a=a_1+a_2$, $f(s)=(a_2+a_1 s^2)/a$, $u(s)=a_2+a_1s^2-as$,
   $\lambda=a_1-a_2$ and $\tau^2=2a_1$. For $a_1=a_2$ (critical case)
   the process $X$ is a Brownian motion with generator $Af(x)=a_1f''(x)$,
   $f\in C^2(\rz)$, $x\in\rz$.
\end{examples}
\subsubsection{The finite mean infinite variance case}
In this subsection it is assumed that $m:=\me(\xi)<\infty$. Since $f$ is convex
on $[0,1]$ the inequality $1-f(s)\le m(1-s)$ holds for all $s\in [0,1]$.
In order to state appropriate limiting results it is usual to control
the difference between $m(1-s)$ and $1-f(s)$. A typical assumption of this
form is the following.

\vspace{2mm}

{\bf Assumption A.} There exists a constant $\alpha\in (1,2]$ and a function
$L:[1,\infty)\to (0,\infty)$ slowly varying (at infinity) such that
\begin{equation} \label{assumption}
   1-f(s)\ =\ m(1-s)-(1-s)^{\alpha}L((1-s)^{-1}),\qquad s\in [0,1).
\end{equation}
Since $f$ is differentiable, Assumption A in particular implies that $L$ is
differentiable. Define $F(s,t):=\me(s^{Z_t})$ for $s\in [0,1]$ and $t\ge 0$.
The following lemma clarifies the structure of $F(s,t)$ under Assumption A.
Recall that $m(t):=\me(Z_t)=e^{\lambda t}<\infty$.
\begin{lemma}
   If the offspring pgf $f$ satisfies Assumption A then, for every $t\ge 0$,
   \begin{equation} \label{consequence}
      1-F(s,t)
      \ =\ m(t)(1-s) - c(t)(1-s)^{\alpha}L((1-s)^{-1})(1+o(1)),
      \qquad s\to 1,
   \end{equation}
   where
   \begin{equation} \label{functionc}
      c(t)\ :=\
      \left\{
         \begin{array}{ll}
         at & \mbox{if $\lambda=0$ (critical case),}\\
         \displaystyle\frac{m(\alpha t)-m(t)}{(\alpha-1)(m-1)}
         \ =\ ae^{\lambda t}\frac{e^{\lambda(\alpha-1)t}-1}{(\alpha-1)\lambda}
         & \mbox{if $\lambda\ne 0$ (non-critical case).}
        \end{array}
      \right.
   \end{equation}
   \label{lemmaoffspringdist}
\end{lemma}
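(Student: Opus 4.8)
The plan is to exploit the fundamental differential equation satisfied by the pgf $F$. For a continuous-time Markov branching process the backward Kolmogorov equation reads $\partial_t F(s,t)=u(F(s,t))$ with $F(s,0)=s$, where $u(s)=a(f(s)-s)$; this is standard and consistent with $m(t)=e^{\lambda t}$. First I would pass to the variables $v:=1-s$ and $w(v,t):=1-F(1-v,t)$, which turns the equation into $\partial_t w=-u(1-w)$ with $w(v,0)=v$. Substituting Assumption A in the form $f(1-w)=1-mw+w^\alpha L(w^{-1})$ and using $\lambda=a(m-1)$ gives $u(1-w)=-\lambda w+aw^\alpha L(w^{-1})$, hence the scalar ODE
\[
\partial_t w\ =\ \lambda w-aw^\alpha L(w^{-1}),\qquad w(v,0)=v.
\]
The claimed relation (\ref{consequence}) is then exactly a statement about the small-$v$ asymptotics of $w(v,t)$.

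Next I would isolate the correction to the linear approximation. Since $m(t)=e^{\lambda t}$ solves the linearised equation, I set $h(v,t):=m(t)v-w(v,t)$. Differentiating and using $\dot m=\lambda m$ yields the \emph{linear} inhomogeneous ODE
\[
\partial_t h\ =\ \lambda h+aw^\alpha L(w^{-1}),\qquad h(v,0)=0,
\]
whose solution via the integrating factor $e^{-\lambda t}$ is
\[
h(v,t)\ =\ ae^{\lambda t}\int_0^t e^{-\lambda r}\,w(v,r)^\alpha L(w(v,r)^{-1})\,{\rm d}r.
\]
If in the integrand one may replace $w(v,r)$ by its leading term $m(r)v$, then using $m(r)^\alpha=e^{\lambda\alpha r}=m(\alpha r)$ the integral collapses to $v^\alpha L(v^{-1})\,ae^{\lambda t}\int_0^t e^{\lambda(\alpha-1)r}\,{\rm d}r$, and evaluating this last elementary integral reproduces the constant $c(t)$ of (\ref{functionc}) in both the critical case $\lambda=0$ and the non-critical case $\lambda\ne 0$. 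Since $1-F(s,t)=w(v,t)=m(t)v-h(v,t)$, this is precisely (\ref{consequence}).

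The main work, and the principal obstacle, is to justify that replacement uniformly in $r$ over the compact interval $[0,t]$. This requires two ingredients. The first is a leading-order estimate $w(v,r)=m(r)v\,(1+o(1))$ as $v\to 0$, uniformly on $[0,t]$: because $\alpha>1$ the nonlinear term $aw^\alpha L(w^{-1})$ is of smaller order than the linear term $\lambda w$ (and, when $\lambda=0$, than $w(v,0)=v$ itself), so a Gronwall-type argument applied to the ODE controls $w(v,r)/(m(r)v)-1$. The second is the passage from $L(w(v,r)^{-1})$ to $L(v^{-1})$, for which I would invoke the uniform convergence theorem for slowly varying functions: since $w(v,r)^{-1}=(m(r)v)^{-1}(1+o(1))$ with $m(r)$ bounded away from $0$ and $\infty$ on $[0,t]$, slow variation gives $L(w(v,r)^{-1})=L(v^{-1})(1+o(1))$ uniformly in $r$. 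Combining these with $(1+o(1))^\alpha=1+o(1)$ (valid since $\alpha\le 2$), the error in the integrand is a uniform $o(1)$ multiple of $v^\alpha L(v^{-1})$, which is exactly the order absorbed by the $(1+o(1))$ factor in (\ref{consequence}). The critical and non-critical cases are handled simultaneously, the only difference being the explicit value of $\int_0^t e^{\lambda(\alpha-1)r}\,{\rm d}r$.
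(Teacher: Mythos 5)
Your argument is correct, but it takes a genuinely different route from the paper. You integrate the backward equation $\partial_t F=u(F)$ in \emph{time}: writing $w(v,t)=1-F(1-v,t)$ and $h(v,t)=m(t)v-w(v,t)$, variation of constants gives the exact Duhamel identity $h(v,t)=ae^{\lambda t}\int_0^t e^{-\lambda r}\,w(v,r)^\alpha L(w(v,r)^{-1})\,{\rm d}r$, and the lemma reduces to replacing $w(v,r)$ by $m(r)v$ in the integrand uniformly in $r\in[0,t]$. That replacement is legitimate as you indicate: $w(v,r)\le m(r)v$ by convexity of $F(\cdot,r)$, Assumption A makes $v\mapsto v^\alpha L(v^{-1})=mv-1+f(1-v)$ non-decreasing (its derivative is $m-f'(1-v)\ge 0$), so the Gronwall step yields $w(v,r)=m(r)v(1+o(1))$ uniformly on $[0,t]$, and the uniform convergence theorem for slowly varying functions handles the factor $L(w^{-1})$. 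The paper instead integrates the backward equation in the \emph{space} variable and, as it explicitly notes, treats the two cases by different techniques: for $\lambda=0$ it sandwiches $at=\int_s^{F(s,t)}(f(x)-x)^{-1}\,{\rm d}x$ using monotonicity of $x\mapsto f(x)-x$; for $\lambda\ne 0$ it sets $h_1(s)=(1-s)m(t)-(1-F(s,t))$, $h_2(s)=(1-s)^\alpha L((1-s)^{-1})$ and extracts $h_1/h_2\to c(t)$ from a bespoke L'Hospital-type rule (Lemma \ref{lemmalhospital}), which in turn relies on the auxiliary estimate of Lemma \ref{lemmaconvpgf} and on Lamperti's theorem that $xL'(x)/L(x)\to 0$. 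Your route buys a unified treatment of the critical and non-critical cases and avoids differentiating $L$ altogether (the paper's Case 2 needs $L'$; you need only measurable slow variation), at the price of the uniform-in-$r$ asymptotics for $w(v,r)$ --- a price you correctly identify and which is paid by standard arguments.
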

\begin{remark}
   Although we are in this subsection mainly interested in the infinite
   variance case, Lemma \ref{lemmaoffspringdist} holds in particular
   for the finite variance case. In this case Taylor expansion of $f$
   around $s=1$ shows that (\ref{assumption}) holds with $\alpha=2$ and
   $L((1-s)^{-1})\sim f''(1-)/2=\me(\xi(\xi-1))/2$ as $s\to 1$. Moreover,
   $c(t)f''(1-)=\me(Z_t(Z_t-1))=F''(1-,t)$, where $F''(s,t)$ denotes the
   second derivative of $F(s,t)$ with respect to $s$.
\end{remark}
In the following we are however interested in the infinite variance situation,
so we assume that $\me(\xi^2)=\infty$. We are now able to state our second
main convergence result.
\begin{theorem} \label{main2}
   Assume that $m:=\me(\xi)<\infty$ and $\me(\xi^2)=\infty$. Suppose that
   Assumption A holds, i.e. there exists a constant $\alpha\in (1,2]$ and
   a slowly varying function $L:[1,\infty)\to (0,\infty)$
   satisfying $\lim_{x\to\infty}L(x)=\infty$ and such that (\ref{assumption})
   holds. Let $(a_n)_{n\in\nz}$ be a sequence of positive real numbers
   satisfying $L(a_n)\sim a_n^\alpha/(\alpha n)$ as $n\to\infty$. Then
   the process $X^{(n)}:=(X_t^{(n)})_{t\ge 0}$, defined via
   \[
   X_t^{(n)}\ :=\ \frac{Z_t^{(n)}-nm(t)}{a_n},\qquad n\in\nz,t\ge 0,
   \]
   converges in $D_\rz[0,\infty)$ as $n\to\infty$ to a limiting process
   $X=(X_t)_{t\ge 0}$ with state space $\rz$ and initial state $X_0=0$,
   whose distribution is characterized as follows. Conditional on $X_s=x$
   the random variable $X_{s+t}$ is distributed as $xm(t)+(m(s))^{1/\alpha}X_t$,
   where $X_t$ is $\alpha$-stable with characteristic function
   $u\mapsto\me(e^{iuX_t})=\exp(c(t)(-iu)^\alpha/\alpha)$,
   $s,t\ge 0$, $u\in\rz$, and Laplace transform $\eta\mapsto
   \me(e^{-\eta X_t})=\exp(c(t)\eta^\alpha/\alpha)$,
   $\eta,t\ge 0$. Note that $\me(X_t)=0$, $t\ge 0$. The variance
   of $X_t$ is equal to $c(t)$ for $\alpha=2$ whereas ${\rm Var}(X_t)=\infty$
   for $t>0$ and $\alpha\in (1,2)$.
\end{theorem}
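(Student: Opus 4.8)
The plan is to deduce the functional convergence $X^{(n)}\Rightarrow X$ in $D_\rz[0,\infty)$ from two ingredients: convergence of all finite-dimensional distributions and tightness of the sequence $(X^{(n)})_{n\in\nz}$. It is convenient to work throughout with Laplace transforms, since for $\eta\ge 0$ the argument $e^{-\eta/a_n}$ lies in $(0,1)$, so all generating functions are evaluated on $[0,1)$, exactly the range in which Assumption A and Lemma \ref{lemmaoffspringdist} apply; moreover the one-sided transform $\me(e^{-\eta X_t})=\exp(c(t)\eta^\alpha/\alpha)$, $\eta\ge 0$, already determines the $\alpha$-stable law of $X_t$. I first note that $a_n\to\infty$: as $L$ is defined on $[1,\infty)$ and $L(a_n)\sim a_n^\alpha/(\alpha n)\to\infty$, the sequence $a_n$ cannot stay bounded, and in fact $a_n=(\alpha n L(a_n))^{1/\alpha}(1+o(1))$ is regularly varying of index $1/\alpha$.

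For the finite-dimensional distributions fix $0=t_0<t_1<\cdots<t_k$ and $\eta_1,\dots,\eta_k\ge 0$, and put $s_j:=e^{-\eta_j/a_n}$. Combining the branching property with the Markov property gives the nested representation
\[
\me\Big(\prod_{j=1}^{k}s_j^{Z_{t_j}^{(n)}}\Big)=\big(F(h_1,t_1)\big)^{n},
\]
where the $h_j\in(0,1)$ are defined by the backward recursion $h_k:=s_k$ and $h_j:=s_j\,F(h_{j+1},t_{j+1}-t_j)$ for $j=k-1,\dots,1$. I would then expand every factor $F(\cdot,\cdot)$ by means of (\ref{consequence}). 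Because each $1-h_j$ is of order $a_n^{-1}$, with $a_n(1-h_j)\to\vartheta_j:=\sum_{i\ge j}\eta_i m(t_i-t_j)$, the scaling $L(a_n)\sim a_n^\alpha/(\alpha n)$ together with the uniform convergence theorem for slowly varying functions turns every ``stable'' contribution $c(\cdot)(1-h_j)^\alpha L((1-h_j)^{-1})$ into a term of order $n^{-1}$, while the linear parts $m(\cdot)(1-h_j)$ are of order $a_n^{-1}$. After raising to the $n$th power the linear parts produce the divergent quantity $n\sum_{j}\eta_j m(t_j)/a_n$, which is cancelled exactly by the centering factor $\exp(n\sum_j\eta_j m(t_j)/a_n)$ thanks to the multiplicativity $m(s)m(t-s)=m(t)$; the order $n^{-1}$ terms survive multiplication by $n$ and yield
\[
\lim_{n\to\infty}\me\Big(\exp\Big(-\sum_{j=1}^{k}\eta_j X_{t_j}^{(n)}\Big)\Big)
=\exp\Big(\frac{1}{\alpha}\sum_{j=1}^{k}c(t_j-t_{j-1})\,m(t_{j-1})\,\vartheta_j^{\alpha}\Big).
\]
A direct computation from the conditional rule of the theorem, namely that given $X_s=x$ the variable $X_{s+t}$ is distributed as $xm(t)+(m(s))^{1/\alpha}X_t$, shows that the right-hand side is the joint Laplace transform of $(X_{t_1},\dots,X_{t_k})$; the case $k=2$ already displays the whole mechanism and the general case follows by induction on $k$.

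It remains to establish tightness, for which I would verify Aldous' criterion. Tightness of the one-dimensional marginals is immediate from the marginal convergence just obtained. For the oscillation condition let $(\tau_n)$ be stopping times bounded by a fixed $T$ and $\delta_n\downarrow 0$. Conditioning on $\mathcal F_{\tau_n}$ and using the strong Markov and branching properties, the conditional Laplace transform of the increment $X_{\tau_n+\delta_n}^{(n)}-X_{\tau_n}^{(n)}$ equals $\exp(\eta Z_{\tau_n}^{(n)}/a_n)\exp(\eta n m(\tau_n)(m(\delta_n)-1)/a_n)(F(e^{-\eta/a_n},\delta_n))^{Z_{\tau_n}^{(n)}}$, and the same expansion as above shows that its logarithm converges in probability to $\eta(1-m(\delta_n))X_{\tau_n}^{(n)}+m(\tau_n)c(\delta_n)\eta^\alpha/\alpha$. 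Both summands vanish as $\delta_n\to 0$, since $m(\delta_n)\to 1$ with $(X_{\tau_n}^{(n)})$ tight and $c(\delta_n)\to 0$ with $m(\tau_n)\le m(T)$; hence the increments tend to $0$ in probability and Aldous' condition holds. The hard part is precisely this step in the genuinely infinite variance regime $\alpha\in(1,2)$: there the conditional second moment of the increment is infinite, so $L^2$-type oscillation estimates are unavailable and one must argue through the conditional Laplace transform --- equivalently, by separating off the rare large jumps --- instead of through a variance bound. Finally $\me(X_t)=0$ follows from the exact centering $\me(X_t^{(n)})=0$, and the stated values of ${\rm Var}(X_t)$ are the classical variances of the limiting $\alpha$-stable law.
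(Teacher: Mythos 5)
Your strategy---full finite-dimensional convergence via nested Laplace transforms, then tightness via Aldous' criterion---is a legitimate route but genuinely different from the paper's. The paper establishes convergence of the \emph{one-dimensional} distributions only (its ``Version 2'' is essentially your expansion with $k=1$), and then obtains convergence in $D_\rz[0,\infty)$ by passing to the space-time processes $(X_t^{(n)},t)_{t\ge 0}$ and invoking Ethier--Kurtz (Corollary 8.7) on uniform convergence of the semigroups over shrinking windows $S_{n,T}=[-\varepsilon_nn/a_n,\varepsilon_nn/a_n]\times[0,T]$; this needs two auxiliary inputs absent from your write-up, namely the uniform asymptotics $a_{nm(s)+xa_n}/a_n\to(m(s))^{1/\alpha}$ and the maximal bound $\pr(\sup_{t\le T}|X_t^{(n)}|\ge\varepsilon_nn/a_n)\to 0$, proved via Doob's $L^1$ submartingale inequality applied to $(Z_t^{(n)}/m(t)-n)_{t\ge 0}$ and the law of large numbers. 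Your recursion $h_j=s_jF(h_{j+1},t_{j+1}-t_j)$, the limits $a_n(1-h_j)\to\vartheta_j$ and the resulting joint Laplace transform are correct (they match the transition rule $X_{s+t}\stackrel{d}{=}xm(t)+(m(s))^{1/\alpha}X_t$), so your approach buys an explicit description of the finite-dimensional distributions that the paper never computes; the price is that all the work is shifted onto the tightness step.

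That step is where your argument has genuine gaps. First, your expansion of $\log F(e^{-\eta/a_n},\delta_n)$ applies (\ref{consequence}) with $t=\delta_n\to 0$ and $s\to 1$ \emph{simultaneously}, whereas Lemma \ref{lemmaoffspringdist} provides the $(1+o(1))$ only for each fixed $t$; you would need uniformity of that error term for $t$ near $0$, which the paper's proof of the lemma (a L'Hospital-type argument at fixed $t$) does not supply and which you do not establish. Second, to dispose of the terms $\eta(1-m(\delta_n))X_{\tau_n}^{(n)}$ and $c(\delta_n)(\eta^\alpha/\alpha)Z_{\tau_n}^{(n)}/n$ you invoke tightness of $(X_{\tau_n}^{(n)})_{n\in\nz}$ over arbitrary stopping times $\tau_n\le T$; this does not follow from tightness of the fixed-time marginals and is itself a maximal-inequality statement. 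What you actually need is only the weaker assertion that $\sup_{t\le T}|X_t^{(n)}|$ is of order $o(n/a_n)$ in probability, which is exactly the paper's Part 3: your remark that $L^2$ oscillation estimates are unavailable is correct, but the $L^1$ maximal inequality for the martingale $(Z_t^{(n)}/m(t)-n)_{t\ge 0}$ is available and is the intended substitute, and without it (or an equivalent) your Aldous verification is circular. With these two repairs the route goes through; as written it does not.
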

\begin{remark}
   As in Theorem \ref{main1} the limiting process $X$ in Theorem \ref{main2} is
   time-homogeneous if and only if $\lambda=0$. We have
   $T_{s,t}g(x):=\me(g(X_{s+t})\,|\,X_s=x)=\me(g(xm(t)+(m(s))^{1/\alpha}X_t))$
   for $s,t\ge 0$, $g\in B(\rz)$ and $x\in\rz$. Note that
   $T_{s,t}g(x)$ is well defined even for some functions $g$ which are not bounded.
   For example, for Laplace test functions of the
   form $g=g_\eta$, defined via $g_\eta(x):=e^{-\eta x}$ for all $x\in\rz$ and
   $\eta\ge 0$, we obtain the explicit formula
   \begin{eqnarray}
      A_sg_\eta(x)
      & := & \lim_{t\to 0}\frac{T_{s,t}g_\eta(x)-g_\eta(x)}{t}
      \ = \ \lim_{t\to 0}
            \frac{e^{-m(t)\eta x+c(t)m(s)\eta^\alpha/\alpha} - e^{-\eta x}}{t}
            \nonumber\\
      & = & \lim_{t\to 0} \Big(-m'(t)\eta x+c'(t)m(s)\frac{\eta^\alpha}{\alpha}\Big)
            e^{-m(t)\eta x+c(t)m(s)\eta^\alpha/\alpha}\nonumber\\
      & = & \Big(-m'(0+)\eta x + c'(0+)m(s)\frac{\eta^\alpha}{\alpha}\Big) e^{-\eta x}
            \nonumber\\
      & = & \Big(-\lambda\eta x + am(s)\frac{\eta^\alpha}{\alpha}\Big)e^{-\eta x},
            \qquad s,\eta\ge 0,x\in\rz. \label{explicit}
   \end{eqnarray}
   For $\alpha=2$ and $g\in C^2(\rz)$ it follows from (\ref{explicit}) that
   \[
   A_sg(x)\ :=\ \lim_{t\to 0}\frac{T_{s,t}g(x)-g(x)}{t}
   \ =\ \lambda xg'(x) + \frac{a}{2}m(s)g''(x),
   \qquad s\ge 0,x\in\rz,
   \]
   showing that for $\alpha=2$ the process $X$ has the same structure
   as in Theorem \ref{main1} with $\sigma^2$ replaced by the constant $a$.

   Assume now that $\alpha\in (1,2)$. Then, from (\ref{explicit}),
   a straightforward calculation based on the formula
   \[
   \int_0^\infty \frac{e^{-\eta h}-1+\eta h}{h^{\alpha+1}}\,{\rm d}h
   \ =\ \frac{\Gamma(2-\alpha)}{\alpha(\alpha-1)}\eta^\alpha\ =\ \Gamma(-\alpha)\eta^\alpha,
   \qquad \eta\ge 0, \alpha\in (1,2),
   \]
   yields
   \[
   A_sg(x)\ =\ \lambda xg'(x)+am(s)\frac{\alpha-1}{\Gamma(2-\alpha)}
   \int_0^\infty \frac{g(x+h)-g(x)-hg'(x)}{h^{\alpha+1}}\,{\rm d}h,
   \qquad s\ge 0, x\in\rz,
   \]
   first for $g=g_\eta$ and, hence, for
   other classes of functions $g$, for example for $g\in C_c^2(\rz)$.
   These formulas for the semigroup and the generator show that $X$ is
   a time-inhomogeneous Ornstein--Uhlenbeck type process
   \cite{shunxiangroeckner}. For fundamental results on such processes
   and related generalized Mehler semigroups we refer the reader to
   \cite{bogachevroecknerschmuland}.

   Even for $\alpha=2$ we have $a_n^\alpha/n\sim\alpha L(a_n)\to\infty$
   as $n\to\infty$, in contrast to the situation in Theorem \ref{main1},
   where $a_n=\sqrt{n}$ and, hence, $a_n^2/n=1$. For $\alpha=2$ the limiting
   random variable $X_t$ has a normal distribution with mean $0$ and variance
   $c(t)$ given via (\ref{functionc}) with $\alpha=2$.
\end{remark}
Two examples are now provided, one with $\alpha=2$ and the other with
$\alpha\in (1,2)$. In the first example the underlying branching process
is supercritical whereas in the second example it is critical. In the first
example $F(s,t)$ can be expressed in terms of the Lambert
$W$ function. In the second example $F(s,t)$ is known explicitly.
\begin{example}
   Suppose that $p_k=4/((k-1)k(k+1))$ for $k\in\{2,3,\ldots\}$, i.e.
   $f(s)=\sum_{k=2}^\infty p_ks^k=2s^{-1}(1-s)^2(-\log(1-s))-2+3s$,
   $s\in (0,1)$.
   Note that (\ref{assumption}) holds with $\alpha=2$,
   $m:=\me(\xi)=3$ and $L(x):=2(\log x)/(1-1/x)\sim2\log x$ as $x\to\infty$.
   Moreover, $\lambda=2a$,
   $m(t):=\me(Z_t)=e^{2at}$ and ${\rm Var}(Z_t)=\infty$ for $t>0$.
   The sequence $(a_n)_{n\in\nz}$,
   defined via $a_n:=\sqrt{2n\log n}$ for all $n\in\nz$, satisfies
   $L(a_n)\sim 2\log a_n\sim\log n=a_n^2/(2n)$ as $n\to\infty$.
   By Theorem \ref{main2}, the process
   $((Z_t^{(n)}-ne^{2at})/\sqrt{2n\log n})_{t\ge 0}$ converges
   in $D_\rz[0,\infty)$ as $n\to\infty$ to a time-inhomogeneous
   process $X=(X_t)_{t\ge 0}$ with distribution
   as described in Theorem \ref{main2}. In particular, for every $t>0$
   the random variable $X_t$ has a normal distribution with
   mean $0$ and variance $c(t)=\frac{1}{2}e^{2at}(e^{2at}-1)$.
   The pgf $F(.,t)$ of $Z_t$ can be computed as follows. From
   the backward equation
   \[
   t
   \ = \ \int_s^{F(s,t)} \frac{1}{u(x)}\,{\rm d}x
   \ = \ \frac{1}{a}\int_s^{F(s,t)}\frac{x}{2(1-x)((x-1)\log(1-x)-x)}\,{\rm d}x
   \ = \ \frac{1}{2a}[v(x)]_s^{F(s,t)}
   \]
   with $v(x):=\log(1-x)-\log(x+(1-x)\log(1-x))$, $x\in (0,1)$, we conclude
   that
   \begin{equation} \label{Fexplicit}
      F(s,t)\ =\ v^{-1}(2at+v(s)),
   \end{equation}
   where $v^{-1}:\rz\to (0,1)$ denotes the inverse of $v$, which turns out to
   be of the form $v^{-1}(y)=(1+W(h))/W(h)$, where $h:=-\exp(-1-e^{-y})
   \in (-1/e,0)$ and $W=W_{-1}$ denotes the lower branch of the Lambert $W$
   function satisfying $W(h)e^{W(h)}=h$ and being real valued on $[-1/e,0)$.
   Expansion of (\ref{Fexplicit}) shows that
   \[
   F(s,t)\ =\ 1 - e^{2at}(1-s)
   + e^{2at}(e^{2at}-1)(1-s)^2\log((1-s)^{-1}) + O((1-s)^2),
   \qquad s\to 1,
   \]
   in agreement with (\ref{consequence}), since
   $c(t)=\frac{1}{2}e^{2at}(e^{2at}-1)$ and $L(x)\sim 2\log x$ as $x\to\infty$.
\end{example}
\begin{example}
   Let $\alpha\in (1,2)$. Assume that $f(s)=s+(1-s)^\alpha/\alpha$,
   $s\in [0,1]$. Note that $p_0=1/\alpha$, $p_1=0$ and
   $p_k=(-1)^k{\alpha\choose k}/\alpha$ for $k\in\{2,3,\ldots\}$. In
   particular, $p_k\sim 1/(\alpha\Gamma(-\alpha) k^{\alpha+1})$ as
   $k\to\infty$. Moreover, $f'(s)=1-(1-s)^{\alpha-1}$ and, therefore,
   $m:=\me(\xi)=f'(1-)=1$. Thus, the underlying branching process is critical,
   the extinction probability is $q=1$ and (\ref{assumption}) holds
   with $L\equiv 1/\alpha$. Note that $u(s)=a(1-s)^\alpha/\alpha$.
   Theorem \ref{main2} is applicable with
   $a_n:=n^{1/\alpha}$. It follows that $(n^{-1/\alpha}(Z_t^{(n)}-n))_{t\ge 0}$
   converges in $D_\rz[0,\infty)$ as $n\to\infty$ to a process $X$ with distribution
   as described in Theorem \ref{main2}. In particular, for every $t\ge 0$ the
   random variable $X_t$ has characteristic function
   $u\mapsto \exp(-at(-iu)^\alpha/\alpha)$, $u\in\rz$. From
   \[
   at
   \ =\ \int_s^{F(s,t)}\frac{1}{f(x)-x}\,{\rm d}x
   \ =\ \int_s^{F(s,t)} \alpha(1-x)^{-\alpha}\,{\rm d}x
   \ =\ \frac{\alpha}{\alpha-1}
        \big((1-F(s,t))^{1-\alpha}-(1-s)^{1-\alpha}\big)
   \]
   it follows that $F(s,t)=1-
   ((\alpha-1)\alpha^{-1}ta + (1-s)^{1-\alpha})^{1/(1-\alpha)}$
   is known explicitly. Note that
   \begin{eqnarray*}
      1-F(s,t)
      & = & (1-s) - \frac{at}{\alpha}(1-s)^\alpha
            + \frac{a^2t^2}{2\alpha}(1-s)^{2\alpha-1} + O((1-s)^{3\alpha-2}),
            \qquad s\to 1,
   \end{eqnarray*}
   in agreement with (\ref{consequence}), since $c(t)=at$ and
   $L\equiv 1/\alpha$.
\end{example}
\subsubsection{The infinite mean case with non-explosion}
In this subsection it is assumed that $m:=\me(\xi)=\infty$ or, equivalently,
that $m(t):=\me(Z_t)=\infty$ for all $t>0$.
In order to state the result it is convenient to
define the function $L:[1,\infty)\to (0,\infty)$ via
\begin{equation} \label{Ldef}
   L(x)\ :=\ x(1-f(1-x^{-1})),\qquad x\ge 1.
\end{equation}
The substitution $s=1-x^{-1}$ shows that this definition is equivalent to
\begin{equation} \label{fLrelation}
   1-f(s)\ =\ (1-s)L((1-s)^{-1}),\qquad s\in [0,1).
\end{equation}
Non-explosion is assumed throughout this
section, which is equivalent to (see, for example, Harris
\cite[Chapter V, Section 9, p.~106, Theorem 9.1]{harris})
\[
\int_\varepsilon^1 \frac{1}{s-f(s)}\,{\rm d}s
\ =\ \int_{(1-\varepsilon)^{-1}}^\infty \frac{1}{x(L(x)-1)}\,{\rm d}x
\ =\ \infty
\]
for all $\varepsilon\in (q,1)$, where $q$ denotes the extinction
probability. For the theory of stable distributions and their domains
of attraction we refer the reader to Geluk and de Haan \cite{gelukdehaan}.
For the moment let $t>0$ be fixed. Then $Z_t^{(n)}$, suitably normalized,
converges in distribution as $n\to\infty$ to a non-degenerate limit,
that is, $Z_t$ is in the domain of attraction of a stable law, if and only
if the following condition is satisfied. There exists $\alpha(t)\in(0,1]$
and a slowly varying function $L_t:[1,\infty)\to(0,\infty)$
such that
\begin{equation}
   \pr(Z_t>x)\ \sim\ x^{-\alpha(t)}L_t(x),\qquad x\to\infty.
   \label{local_1}
\end{equation}
And, if $\alpha(t)=1$, then $L_t(x)\to\infty$ as $x\to\infty$.
In this subsection only the case $\alpha(t)<1$ is
investigated.
Recall that $F(s,t)=\me(s^{Z_t})$ for $s\in [0,1]$ and $t\ge 0$.
It follows from Bingham and Doney \cite{binghamdoney} that (\ref{local_1})
is then equivalent to
\begin{equation}
   1-F(s,t)\ =\ (1-s)^{\alpha(t)}L_t((1-s)^{-1}),\qquad s\in[0,1),
   \label{eqgeneratingfunc}
\end{equation}
where, to be precise, the function $L_t$ of (\ref{eqgeneratingfunc})
replaces $\Gamma(1-\alpha(t))L_t$.
%
%
Then,
\begin{equation} \label{alphatgeneral}
   \alpha(t)\ =\ \frac{\log\frac{1-F(s,t)}{L_t((1-s)^{-1})}}{\log(1-s)},
   \qquad t\ge 0,s\in [0,1).
\end{equation}
Since $L_t$ is slowly varying and hence satisfies
$\log L_t(x)/\log x\to 0$ as $x\to\infty$, it follows from
(\ref{alphatgeneral}) that
\begin{equation} \label{alphat}
\alpha(t)\ =\ \lim_{s\to 1}\frac{\log(1-F(s,t))}{\log(1-s)},
\qquad t\ge 0.
\end{equation}
In particular, $\alpha(t)$ is uniquely determined by the pgf $F(.,t)$.
Note that (\ref{eqgeneratingfunc}) always holds for $t=0$ with
$\alpha(0)=1$ and $c(0)=1$ because of the boundary condition $F(s,0)=s$.

Suppose (\ref{eqgeneratingfunc}) holds for all $t\ge 0$. From the
iteration formula $F(s,t+u)=F(F(s,t),u)$ it follows that
\begin{eqnarray*}
   &   & \hspace{-20mm}(1-s)^{\alpha(t+u)}L_{t+u}((1-s)^{-1})
   \ = \ 1-F(s,t+u)
   \ = \ 1-F(F(s,t),u)\\
   & = & (1-F(s,t))^{\alpha(u)}L_u((1-F(s,t))^{-1})\\
   & = & (1-s)^{\alpha(t)\alpha(u)} L_t^{\alpha(u)}((1-s)^{-1})
         L_u((1-s)^{-\alpha(t)}L_t^{-1}((1-s)^{-1})),
         \qquad s\in [0,1).
\end{eqnarray*}
Since all terms depending on $L_.$ are slowly varying, $\alpha(.)$
has to be multiplicative, i.e. $\alpha(t+u)=\alpha(t)\alpha(u)$
for all $t,u\ge 0$. The map $k:[0,\infty)\to [0,\infty)$, defined via
$k(t):=-\log\alpha(t)$ for all $t\ge 0$, is hence additive, so it
satisfies the Cauchy functional equation. By Aczel
\cite[p.~34, Theorem 1]{aczel}, $k(t)=Ct$ and, hence, $\alpha(t)=e^{-Ct}$
for all $t\ge 0$, where $C:=k(1)=-\log\alpha(1)\in [0,\infty)$. Clearly,
either $\alpha(t)=1$ for all $t\ge 0$, or $\alpha(t)<1$ for all $t>0$,
depending on whether $C=0$ or $C>0$. Also, the map $t\mapsto L_t(x)$
is continuously differentiable and satisfies the equation
\[
L_{t+u}((1-s)^{-1})\ =\
L_t^{\alpha(u)}((1-s)^{-1})L_u((1-s)^{-\alpha(t)}L_t^{-1}((1-s)^{-1})),
\qquad t,u\ge 0, s\in [0,1),
\]
or $L_{t+u}(x)=L_t^{\alpha(u)}(x)L_u(x^{\alpha(t)}L_t^{-1}(x))$ for all
$t,u\ge 0$ and all $x\ge 1$. The following result (Lemma \ref{lemmaminfty})
relates (\ref{eqgeneratingfunc}) to the pgf $f$ of the offspring
distribution of the branching process.
The map $s\mapsto L((1-s)^{-1})=\frac{1-f(s)}{1-s}$ has derivative
$s\mapsto\frac{1}{1-s}(\frac{1-f(s)}{1-s}-f'(s))$, which is strictly
positive on $[0,1)$ since $f$ is strictly convex. Thus, $L$ is strictly
increasing on $[1,\infty)$. We also have $L(x)\to\infty$ as $x\to\infty$
since $m=\infty$. The proof of Lemma \ref{lemmaminfty} is provided in
Section \ref{proofs3}.
%
%
%
\begin{lemma} \label{lemmaminfty}
   If $m:=f'(1-)=\infty$ then the following conditions are equivalent.
   \begin{enumerate}
      \item[(i)] For every $t>0$ there exists $\alpha(t)\in(0,1)$ and
         a slowly varying function $L_t:[1,\infty)\to (0,\infty)$ such
         that (\ref{eqgeneratingfunc}) holds.
      \item[(ii)] For every $t>0$ the limit
         \[
         \alpha(t)\ :=\ \lim_{s\to 1}\alpha(s,t)\ \in\ (0,1)
         \]
         exists, where $\alpha(s,t):=(1-s)(\frac{\partial}{\partial s}F(s,t))/(1-F(s,t))$
         for all $s\in [0,1)$.
      \item[(iii)] The limit
         \begin{equation} \label{Alimit}
         A\ :=\ \lim_{x\to\infty} \frac{L(x)}{\log x}
         \ =\ \lim_{s\to 1}\frac{1-f(s)}{(1-s)\log((1-s)^{-1})}
         \ \in\ (0,\infty)
         \end{equation}
         exists.
   \end{enumerate}
   In this case $\alpha(t)=e^{-aAt}$ for all $t\ge 0$.
\end{lemma}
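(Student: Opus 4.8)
The plan is to reduce the whole statement to the asymptotics of the single increasing function $\ell(w):=L(e^{w})$, so that condition (iii) becomes literally $\ell(w)\sim Aw$ as $w\to\infty$ (this is just (\ref{Alimit}) rewritten with $x=e^{w}$). The engine will be an exact closed form for the quantity $\alpha(s,t)=(1-s)(\partial_{s}F(s,t))/(1-F(s,t))$ of (ii). Starting from the backward equation $\partial_{t}F=u(F)$, $F(s,0)=s$, with $u(\sigma)=a(f(\sigma)-\sigma)$, I would first record two consequences: differentiating $\partial_{t}F=u(F)$ in $s$ and solving the resulting linear ODE in $t$ gives $\partial_{s}F(s,t)=u(F(s,t))/u(s)$, while (\ref{fLrelation}) gives $u(\sigma)=-a(1-\sigma)(h(\sigma)-1)$, where $h(\sigma):=(1-f(\sigma))/(1-\sigma)=L((1-\sigma)^{-1})$. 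Substituting both yields the identity that drives everything,
\[
   \alpha(s,t)\ =\ \frac{h(F(s,t))-1}{h(s)-1}.
\]
Passing to the coordinate $w:=-\log(1-s)$ and writing $w(t):=-\log(1-F(s,t))$ for the orbit, the backward equation becomes the autonomous ODE $\dot{w}(t)=-a(\ell(w(t))-1)$, equivalently $t=\frac{1}{a}\int_{w(t)}^{w}(\ell(\omega)-1)^{-1}\,d\omega$, and the identity reads $\alpha(s,t)=(\ell(w(t))-1)/(\ell(w)-1)$. I would also note that $\ell$ is strictly increasing with $\ell(\omega)\to\infty$ (exactly the monotonicity and divergence of $L$ recorded before the lemma), so the integrand $(\ell(\omega)-1)^{-1}$ is positive and strictly decreasing.

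For (iii)$\Rightarrow$(ii), I would feed $\ell(\omega)\sim A\omega$ into $t=\frac{1}{a}\int_{w(t)}^{w}(\ell(\omega)-1)^{-1}\,d\omega$ and squeeze the integral between $\frac{1\mp\varepsilon}{aA}\log(w/w(t))$ to get $w(t)/w\to e^{-aAt}$ and $w(t)\to\infty$; the identity then gives $\alpha(s,t)\to e^{-aAt}\in(0,1)$, which is (ii) with $\alpha(t)=e^{-aAt}$. For (ii)$\Rightarrow$(i), I would observe that $\alpha(s,t)=-(1-s)\,\partial_{s}\log(1-F(s,t))$ is precisely the elasticity of $1-F(\cdot,t)$ at $s=1$, so its convergence to $\alpha(t)$ is the Karamata/representation criterion for $1-F(\cdot,t)$ to be regularly varying of index $\alpha(t)$, i.e. (\ref{eqgeneratingfunc}). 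The converse (i)$\Rightarrow$(ii) is the monotone density theorem: $1-F(\cdot,t)=\int_{0}^{1-s}\partial_{s}F$ where $\partial_{s}F(\cdot,t)$ is monotone because the pgf $F(\cdot,t)$ is convex, so regular variation of $1-F(\cdot,t)$ transfers to convergence of its elasticity $\alpha(s,t)$.

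The delicate direction, and the one I expect to be the main obstacle, is (ii)$\Rightarrow$(iii): one must convert the asymptotic self-similarity of the flow back into the sharp relation $\ell(w)\sim Aw$, and the tempting step of differentiating the convergence $\alpha(s,t)\to\alpha(t)$ is not licensed. The plan is first to deduce multiplicativity: from $F(s,t+r)=F(F(s,t),r)$ and the identity one gets $\alpha(s,t+r)=\alpha(s,t)\,\alpha(F(s,t),r)$, and letting $s\to1$ (so $F(s,t)\to1$) gives $\alpha(t+r)=\alpha(t)\alpha(r)$, hence $\alpha(t)=e^{-Ct}$ with $C\in(0,\infty)$. Next, (ii)$\Rightarrow$(i) together with (\ref{alphat}) yields $w(t)/w\to\alpha(t)=e^{-Ct}$. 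Finally I would exploit the \emph{exactness} of $at=\int_{w(t)}^{w}(\ell(\omega)-1)^{-1}\,d\omega$ and the monotonicity of the integrand: sandwiching the integral as $\frac{w-w(t)}{\ell(w)-1}\le at\le\frac{w-w(t)}{\ell(w(t))-1}$, substituting $w-w(t)\sim(1-e^{-Ct})w$, and letting $t\downarrow0$ in the two resulting one-sided bounds forces $\omega/(\ell(\omega)-1)\to a/C$, that is $\ell(\omega)\sim(C/a)\,\omega$.

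This last squeeze is the crux: monotonicity of $(\ell-1)^{-1}$ (inherited from strict monotonicity of $L$) is exactly what replaces the forbidden differentiation and upgrades the flow asymptotics to genuine regular variation of $\ell$. It simultaneously identifies the constant, $A=C/a$, so that (iii) holds and $\alpha(t)=e^{-Ct}=e^{-aAt}$, which is the value claimed in the lemma. Together with the two implications of the previous paragraph this closes the cycle (iii)$\Rightarrow$(ii)$\Rightarrow$(i)$\Rightarrow$(ii)$\Rightarrow$(iii), establishing the equivalence and the formula $\alpha(t)=e^{-aAt}$.
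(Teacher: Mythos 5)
Your proposal is correct and follows essentially the same route as the paper: the same exact identity $\alpha(s,t)=\frac{L((1-F(s,t))^{-1})-1}{L((1-s)^{-1})-1}$ obtained from the forward/backward equations, the backward equation written as the exact integral $at=\int_{(1-F(s,t))^{-1}}^{(1-s)^{-1}}\frac{{\rm d}x}{x(L(x)-1)}$ (your $w$-coordinate is just the substitution $x=e^{\omega}$), a Lamperti/Karamata-type equivalence for (i)$\Leftrightarrow$(ii), multiplicativity plus the Cauchy equation to get $\alpha(t)=e^{-Ct}$, and monotonicity sandwiches followed by $t\downarrow 0$ to identify $A=C/a$. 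The only cosmetic deviations are that you squeeze the integral directly where the paper integrates by parts for (iii)$\Rightarrow$(ii), and you invoke the Karamata representation and monotone density theorems where the paper cites Lamperti's occupation-time theorem.
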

\begin{remark}
   Note that
   \[
   aA
   \ =\ a\lim_{s\to 1}\frac{f(s)-1}{(1-s)\log(1-s)}
   \ =\ \lim_{s\to 1}\frac{u(s)-a(1-s)}{(1-s)\log(1-s)}
   \ =\ \lim_{s\to 1}\frac{u(s)}{(1-s)\log(1-s)}.
   \]
   Thus, $\alpha(t)=e^{-aAt}$ can be alternatively computed from the
   function $u(.)$.
\end{remark}
Suppose $m=\infty$ and that the limit $A:=\lim_{x\to\infty} L(x)/\log x
\in(0,\infty)$ in Lemma \ref{lemmaminfty} exists. Recall that, by Lemma
\ref{lemmaminfty}, the existence of the limit $A$ is equivalent to the
existence of constants $\alpha(t)\in (0,1)$ and of
slowly varying functions $L_t$ such that
(\ref{eqgeneratingfunc}) holds, i.e.
$1-F(s,t)=(1-s)^{\alpha(t)}L_t((1-s)^{-1})$. In the following we focus on
the particular situation that the limit
\begin{equation} \label{betatdef}
\beta(t)\ :=\
\lim_{x\to\infty} L_t(x)
\ =\ \lim_{s\to 1} L_t((1-s)^{-1})
\ =\ \lim_{s\to 1}\frac{1-F(s,t)}{(1-s)^{\alpha(t)}}\ \in\ (0,\infty)
\end{equation}
exists for each $t\ge 0$ and is neither $0$ nor $\infty$. We know already
that $\alpha(t)=e^{-aAt}$. If (\ref{betatdef}) holds, then we must have
$A>0$, since otherwise $\alpha(t)=1$ and hence $\beta(t)=m(t)=\infty$,
in contradiction to (\ref{betatdef}). The following result relates
(\ref{betatdef}) to the offspring's pgf $f$ and provides an explicit
formula for $\beta(t)$. The proof of Lemma \ref{betatlemma} is provided
in Section \ref{proofs3}.
\begin{lemma} \label{betatlemma}
   Suppose $m=\infty$ and that (\ref{Alimit})
   holds. If the limit $B:=\lim_{x\to\infty}(L(x)-A\log x)\in\rz$
   exists, then (\ref{betatdef}) holds for all $t\ge 0$. In this case
   \begin{equation} \label{betatformula}
      \beta(t)\ =\ \exp\bigg(\frac{B-1}{A}(1-\alpha(t))\bigg),\qquad t\ge 0.
   \end{equation}
\end{lemma}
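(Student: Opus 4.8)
The plan is to combine the backward equation $\frac{\partial}{\partial t}F(s,t)=u(F(s,t))$, $F(s,0)=s$, with the sharpened expansion of $L$ provided by the hypothesis that $B=\lim_{x\to\infty}(L(x)-A\log x)$ exists, namely $L(x)=A\log x+B+o(1)$ as $x\to\infty$. Writing $G(s,t):=1-F(s,t)$ and using $u(s)=a(f(s)-s)$ together with the defining relation $1-f(s)=(1-s)L((1-s)^{-1})$ from (\ref{fLrelation}), a short computation gives $u(1-G)=aG(1-L(G^{-1}))$, so that the backward equation turns into $\frac{\partial}{\partial t}G=aG(L(G^{-1})-1)$ and hence
\[
\frac{\partial}{\partial t}\log G(s,t)\ =\ a\bigl(L(G(s,t)^{-1})-1\bigr),\qquad G(s,0)=1-s.
\]

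Next I would introduce the very quantity whose limit is sought, $H(s,t):=G(s,t)/(1-s)^{\alpha(t)}=L_t((1-s)^{-1})$, so that $\beta(t)=\lim_{s\to1}H(s,t)$ is exactly what must be shown to exist and to be evaluated. Recalling from Lemma~\ref{lemmaminfty} that $\alpha(t)=e^{-aAt}$, whence $\alpha'(t)=-aA\alpha(t)$, differentiation yields $\frac{\partial}{\partial t}\log H=\frac{\partial}{\partial t}\log G+aA\alpha(t)\log(1-s)$. Substituting $L(G^{-1})=-A\log G+B+o(1)$ and the identity $\log G=\log H+\alpha(t)\log(1-s)$, the decisive point is that the two $\alpha(t)\log(1-s)$ contributions cancel exactly, leaving the scalar equation
\[
\frac{\partial}{\partial t}\log H(s,t)\ =\ -aA\log H(s,t)+a(B-1)+a\,r(s,t),\qquad \log H(s,0)=0,
\]
where the remainder satisfies $r(s,t)\to0$ whenever $G(s,t)\to0$.

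Integrating this linear ODE in $t$ (for fixed $s$) with integrating factor $e^{aAt}$ and initial value $\log H(s,0)=0$ gives
\[
\log H(s,t)\ =\ \frac{B-1}{A}\bigl(1-e^{-aAt}\bigr)+e^{-aAt}\int_0^t e^{aAw}\,a\,r(s,w)\,{\rm d}w.
\]
Since $e^{-aAt}=\alpha(t)$, the first term is precisely $\frac{B-1}{A}(1-\alpha(t))$, the asserted value of $\log\beta(t)$ in (\ref{betatformula}), and it remains only to show that the error integral tends to $0$ as $s\to1$.

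The main obstacle is controlling this error term uniformly in $w\in[0,t]$. Here I would exploit monotonicity in time: since $m=\infty$ forces the branching process to be supercritical with extinction probability $q<1$, for $s\in(q,1)$ one has $F(s,w)\in(q,1)$ and thus $\partial_w G(s,w)=-u(F(s,w))>0$, so $w\mapsto G(s,w)$ is nondecreasing on $[0,t]$ and $\sup_{w\in[0,t]}G(s,w)=G(s,t)$. By non-explosion the pgf $F(\cdot,t)$ is continuous with $F(1,t)=1$, whence $G(s,t)\to0$ as $s\to1$ for each fixed $t$; consequently $\sup_{w\in[0,t]}|r(s,w)|\le\sup_{0<g\le G(s,t)}|r(g)|\to0$, and the error integral is bounded by $\frac{1-\alpha(t)}{A}\sup_{w\in[0,t]}|r(s,w)|\to0$. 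Letting $s\to1$ then yields $\log\beta(t)=\frac{B-1}{A}(1-\alpha(t))$; in particular the limit $\beta(t)$ exists in $(0,\infty)$, so (\ref{betatdef}) holds and (\ref{betatformula}) follows. The transfer of the $o(1)$ term in $L(x)=A\log x+B+o(1)$ through the flow, i.e. the uniform smallness of $r(s,w)$, is where the bulk of the rigor lies.
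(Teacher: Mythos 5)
Your argument is correct and is essentially the paper's proof: you derive the same evolution equation $\frac{\partial}{\partial t}\log L_t(x)=a\bigl(-A\log L_t(x)+B-1+r\bigr)$ for $\log H(s,t)=\log L_t((1-s)^{-1})$, and you control the remainder uniformly on $[0,t]$ via exactly the same time-monotonicity of $w\mapsto 1-F(s,w)$ for $s>q$. The only cosmetic difference is that you solve the linear ODE explicitly with the integrating factor $e^{aAt}$, whereas the paper compares $\log L_t(x)$ with the integral equation satisfied by $\log\beta(t)$ and invokes Gronwall's inequality; the two bookkeeping devices are interchangeable here.
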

We are now able to provide the third main convergence result.
In the following the notation $E:=[0,\infty)$ is used.
\begin{theorem} \label{main3}
   Suppose that $m=\infty$ and let $L$ be defined via (\ref{Ldef}) such
   that (see (\ref{fLrelation})) the relation $1-f(s)=(1-s)L((1-s)^{-1})$
   holds for all $s\in [0,1)$. Assume that both limits
   \[
   A\ :=\ \lim_{x\to\infty}\frac{L(x)}{\log x}\ \in\ (0,\infty)
   \quad\mbox{and}\quad
   B\ :=\ \lim_{x\to\infty}(L(x)-A\log x)\ \in\ \rz
   \]
   exist.
   For $t\ge 0$ define
   \begin{equation} \label{alphabeta}
      \alpha(t)\ :=\ e^{-aAt}\quad\mbox{and}\quad
      \beta(t)\ :=\ \exp\bigg(\frac{B-1}{A}(1-\alpha(t))\bigg).
   \end{equation}
   Then, as $n\to\infty$, the scaled process
   $X^{(n)}:=(X_t^{(n)})_{t\ge 0}$, defined via
   \[
   X_t^{(n)}\ :=\ n^{-1/\alpha(t)}Z_t^{(n)},\qquad t\ge 0,
   \]
   converges in $D_E[0,\infty)$ to a limiting continuous-state branching
   process $X=(X_t)_{t\ge 0}$, whose distribution is characterized as
   follows.
   \begin{enumerate}
      \item[i)] For every $t\ge 0$ the marginal random variable
         $X_t$ is $\alpha(t)$-stable with Laplace transform $\lambda\mapsto
         \exp(-\beta(t)\lambda^{\alpha(t)})$, $\lambda\ge 0$.
      \item[ii)] The semigroup $(T_t)_{t\ge 0}$ of $X$ satisfies
         $T_tg(x)=\me(g(x^{1/\alpha(t)}X_t))$, $x,t\ge 0$, $g\in B(E)$, i.e.
         conditional on $X_s=x$ the random variable $X_{s+t}$ has the same
         distribution as $x^{1/\alpha(t)}X_t$.
   \end{enumerate}
\end{theorem}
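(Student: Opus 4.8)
The plan is to establish weak convergence in $D_E[0,\infty)$ by proving convergence of the finite-dimensional distributions and tightness separately, and then to identify the limit through its Laplace functionals. Throughout I would work with Laplace transforms and exploit the branching property in the form $\me(s^{Z_t^{(n)}})=F(s,t)^n$. For a single time point $t$ and $\eta\ge 0$, putting $s_n:=\exp(-\eta n^{-1/\alpha(t)})$ gives $\me(e^{-\eta X_t^{(n)}})=F(s_n,t)^n$, and since $1-s_n\sim\eta n^{-1/\alpha(t)}$, relation (\ref{eqgeneratingfunc}) together with (\ref{betatdef}) (available because the limits $A$ and $B$ exist, by Lemma \ref{betatlemma}) yields $n(1-F(s_n,t))=n(1-s_n)^{\alpha(t)}L_t((1-s_n)^{-1})\to\eta^{\alpha(t)}\beta(t)$. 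Hence $F(s_n,t)^n\to\exp(-\beta(t)\eta^{\alpha(t)})$, the Laplace transform of the $\alpha(t)$-stable law in part~i). The decisive algebraic input, used repeatedly below, is the multiplicativity $\alpha(s)\alpha(t)=\alpha(s+t)$, which follows from $\alpha(t)=e^{-aAt}$ (Lemma \ref{lemmaminfty}) and which I apply in the form $\alpha(t-r)/\alpha(t)=1/\alpha(r)$.

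For the finite-dimensional distributions I would proceed by backward induction over time points $0\le t_1<\cdots<t_k$ using the Markov and branching properties of $Z^{(n)}$. Conditioning on ${\cal F}_{t_{k-1}}:=\sigma(Z_u^{(n)},u\le t_{k-1})$ and writing $s_n:=\exp(-\eta_k n^{-1/\alpha(t_k)})$, the branching property gives $\me(e^{-\eta_k X_{t_k}^{(n)}}\,|\,{\cal F}_{t_{k-1}})=F(s_n,t_k-t_{k-1})^{Z_{t_{k-1}}^{(n)}}=\exp(-c_n X_{t_{k-1}}^{(n)})$, where $c_n:=-n^{1/\alpha(t_{k-1})}\log F(s_n,t_k-t_{k-1})$ is deterministic. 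The identity $\alpha(t_k-t_{k-1})/\alpha(t_k)=1/\alpha(t_{k-1})$ shows $c_n\to\beta(t_k-t_{k-1})\eta_k^{\alpha(t_k-t_{k-1})}=:c$, so the $k$-point Laplace transform collapses to a $(k-1)$-point transform with $\eta_{k-1}$ replaced by $\eta_{k-1}+c_n$. A routine equicontinuity estimate (using $xe^{-\delta x}\le 1/(e\delta)$ to replace $c_n$ by its limit $c$) lets the induction run from the one-dimensional case. The resulting recursion matches the branching mechanism $u_t(\eta)=\beta(t)\eta^{\alpha(t)}$ of the limiting process, whose composition rule $u_{s+t}=u_s\circ u_t$ is guaranteed by $\alpha(s+t)=\alpha(s)\alpha(t)$ and $\beta(s+t)=\beta(t)\beta(s)^{\alpha(t)}$, both read off from (\ref{alphabeta}). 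This simultaneously identifies the semigroup $T_tg(x)=\me(g(x^{1/\alpha(t)}X_t))$ of part~ii); since $\me_x(e^{-\eta X_t})=\exp(-x\beta(t)\eta^{\alpha(t)})$, the limit $X$ is genuinely a continuous-state branching process.

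It remains to prove tightness of $(X^{(n)})_{n\in\nz}$ in $D_E[0,\infty)$, and I expect this to be the main obstacle, because the marginals are heavy tailed with infinite mean and the space scaling $n^{-1/\alpha(t)}$ depends on $t$. I would verify Aldous' criterion together with compact containment, the latter being the delicate point. Fix $T>0$; for a threshold $R$ set $s_n:=\exp(-\eta n^{-1/\alpha(T)})$ and use the bounded martingale $G_t:=\me(s_n^{Z_T^{(n)}}\,|\,{\cal F}_t)=F(s_n,T-t)^{Z_t^{(n)}}$, $t\in[0,T]$. Optional stopping at $\tau:=\inf\{t:X_t^{(n)}>R\}$ gives $G_0=F(s_n,T)^n=\me(G_{\tau\wedge T})$. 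On $\{\tau\le T\}$ one has $Z_\tau^{(n)}\ge Rn^{1/\alpha(\tau)}$, so $\log G_\tau\le -Z_\tau^{(n)}(1-F(s_n,T-\tau))$, and using $\alpha(T-\tau)/\alpha(T)=1/\alpha(\tau)$ together with the uniform control of $1-F(\cdot,\cdot)$ from (\ref{eqgeneratingfunc}) and (\ref{betatdef}) one obtains $G_\tau\le\exp(-R\,\eta^{\alpha(T-\tau)}\beta(T-\tau)(1+o(1)))$. Choosing $\eta=\eta(R)\to 0$ with $R\eta\to\infty$ (for instance $\eta=R^{-1/2}$) forces $G_0\to 1$, since $\eta^{\alpha(T)}\to 0$, while $R\eta^{\alpha(T-\tau)}\ge R\eta\to\infty$ makes $G_\tau$ uniformly small on $\{\tau\le T\}$; the elementary bound $G_0\le 1-(1-\delta)\pr(\tau\le T)$ then yields $\pr(\sup_{t\le T}X_t^{(n)}>R)\le\eta$ for $R$ large, uniformly in $n$. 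The hardest ingredient here is the \emph{uniform} (over the random $\tau\in[0,T]$) convergence $L_{T-\tau}((1-s_n)^{-1})\to\beta(T-\tau)$ of the slowly varying parts, which I would control through Potter-type bounds and the monotonicity established before Lemma \ref{lemmaminfty}. The remaining Aldous increment condition, $X_{\tau_n+\theta_n}^{(n)}-X_{\tau_n}^{(n)}\to 0$ in probability for stopping times $\tau_n\le T$ and $\theta_n\downarrow 0$, I would check by applying the Markov property at $\tau_n$ and showing via the same Laplace-transform estimates that the transition over a vanishing time interval converges to the identity. Combining finite-dimensional convergence with tightness gives convergence in $D_E[0,\infty)$ to the continuous-state branching process identified above.
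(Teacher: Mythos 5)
Your proposal is correct in substance but follows a genuinely different route from the paper. You prove convergence of the finite-dimensional distributions by backward induction on Laplace transforms (using the Markov/branching property to collapse a $k$-point transform to a $(k-1)$-point one via $c_n=-n^{1/\alpha(t_{k-1})}\log F(s_n,t_k-t_{k-1})\to\beta(t_k-t_{k-1})\eta_k^{\alpha(t_k-t_{k-1})}$, which is correct and matches the cumulant composition $u_{s+t}=u_s\circ u_t$), and you then establish tightness separately via compact containment (optional stopping of the bounded martingale $F(s_n,T-t)^{Z_t^{(n)}}$) and Aldous' criterion. The paper instead proves only the one-dimensional convergence and then passes to the space-time process $Y^{(n)}=(X_t^{(n)},t)$, verifying \emph{uniform} convergence of the semigroups on the test functions $e^{-\lambda x-\mu s}$ and invoking an Ethier--Kurtz-type result (via \cite[Proposition 5.4]{kuklamoehle}); the uniformity over the initial state is obtained essentially for free from the monotone coupling $Z_t^{(1)}\le Z_t^{(2)}\le\cdots$ and P\'olya's theorem (pointwise convergence of monotone functions to a continuous monotone limit is uniform), and the uniformity over the time coordinate from the multiplicativity $\alpha(s+t)=\alpha(s)\alpha(t)$, which reduces everything to $s=0$. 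Your approach buys a self-contained, probabilistically explicit argument (fdd plus tightness) at the cost of heavier uniform estimates; the paper's buys brevity by letting monotonicity do the uniformity work and by outsourcing tightness to the semigroup-convergence machinery.

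Two points in your sketch need shoring up. First, the uniform (over the random $\tau\in[0,T]$) convergence $L_{T-\tau}((1-s_n)^{-1})\to\beta(T-\tau)$ is not a Potter-bound statement: Potter bounds control a single slowly varying function at varying arguments, whereas here you need $\sup_{u\in[0,T]}|L_u(x)-\beta(u)|\to 0$ as $x\to\infty$, i.e.\ uniformity over the family index $u$. This does hold, but the right source is the Gronwall estimate $|\log L_t(x)-\log\beta(t)|\le a\varepsilon t e^{aAt}$ in the proof of Lemma \ref{betatlemma}, which is already uniform for $t$ in compact intervals once $x$ is large. Second, the Aldous increment condition is only asserted; carrying it out requires the same kind of uniformity over the conditioning value $X_{\tau_n}^{(n)}=x\le R$ and over $\tau_n\in[0,T]$, which is exactly the difficulty the paper circumvents with P\'olya's theorem. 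Both gaps are fixable with the tools already in the paper, so the plan is viable, but as written these are the places where the proof is not yet complete.
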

The proof of Theorem \ref{main3} is provided in Section \ref{proofs}.
We now provide three examples. In the first two examples the distribution of
$Z_t$ is known explicitly.
\begin{example} \label{neveu}
   Assume that $\xi$ has distribution $p_k:=\pr(\xi=k):=1/(k(k-1))$,
   $k\in\{2,3,\ldots\}$. Note that $\xi=\lfloor X\rfloor$, where $X$
   has density $f(x)=1/(x-1)^2$, $x\ge 2$, so $X$ has a shifted Pareto
   distribution with parameter $1$. Then,
   $f(s)=s+(1-s)\log(1-s)=1-(1-s)L((1-s)^{-1})$ with
   $L(x):=1+\log x$ and $u(s):=a(f(s)-s)=a(1-s)\log(1-s)$.
   Note that $A:=\lim_{x\to\infty}L(x)/\log x=1$ and
   $B:=\lim_{x\to\infty}(L(x)-\log x)=1$.
   From the backward equation
   $(\partial/\partial t)F(s,t)=u(F(s,t))$ it follows that
   \[
   t\ =\ \int_s^{F(s,t)}\frac{1}{u(x)}\,{\rm d}x
   \ =\ \frac{1}{a}\left[-\log(-\log(1-x))\right]_s^{F(s,t)}
   \ =\ \frac{1}{a}\log\Big(\frac{\log(1-s)}{\log(1-F(s,t))}\Big).
   \]
   Thus, $F(s,t)=1-(1-s)^{e^{-at}}$ showing that $Z_t$ is Sibuya distributed
   (see, for example, Christoph and Schreiber
   \cite[Eq.~(2)]{christophschreiber}) with parameter $e^{-at}$.
   The Sibuya distribution and similar distributions occur for example
   in Gnedin \cite[p.~84, Eq.~(9)]{gnedin},
   Huillet and M\"ohle \cite[p.~9]{huilletmoehle},
   Iksanov and M\"ohle \cite[p.~225]{iksanovmoehle} and
   Pitman \cite[p.~84, Eq.~(18)]{pitman1}, \cite[p.~70, Eq.(3.38)]{pitman2}.
   We conclude that (\ref{betatdef}) holds with $\alpha(t):=e^{-at}$ and
   $\beta(t):=1$. By Theorem \ref{main3}, as $n\to\infty$, the scaled process
   $X^{(n)}:=(Z_t^{(n)}/n^{e^{at}})_{t\ge 0}$ converges in $D_E[0,\infty)$
   to a limiting process $X=(X_t)_{t\ge 0}$ such that $X_t$ has Laplace
   transform $\lambda\mapsto\exp(-\lambda^{e^{-at}})$, $\lambda\ge 0$,
   and the semigroup $(T_t)_{t\ge 0}$ of $X$ satisfies
   $T_tg(x)=\me(g(x^{e^{at}}X_t))$, $x,t\ge 0$, $g\in B(E)$. We
   identify $(X_{t/a})_{t\ge 0}$ as Neveu's continuous-state branching
   process \cite{neveu}. For $a=1$ this example coincides with
   \cite[Theorem 2.1 b)]{kuklamoehle} stating that the fixation line of
   the Bolthausen--Sznitman $n$-coalescent, properly scaled, converges
   as $n\to\infty$ to Neveu's continuous-state branching process.
\end{example}
\begin{example} \label{neveugeneral}
   Example \ref{neveu} is easily generalized as follows. Fix two
   constants $b>0$ and $c\ge 0$ with $b+c\le 1$ and assume that
   $p_0:=c$, $p_1:=1-b-c$ and $p_k:=b/(k(k-1))$ for $k\ge 2$. Then
   $f(s)=s+(1-s)(c+b\log(1-s))=1-(1-s)(1-c-b\log(1-s))$,
   $u(s)=a(f(s)-s)=a(1-s)(c+b\log(1-s))$ and $L(x)
   =1-c+b\log x$. For $b=1$ and  $c=0$ we are back in
   Example \ref{neveu}. Note that $A:=\lim_{x\to\infty}L(x)/\log x=b>0$
   and $B:=\lim_{x\to\infty}(L(x)-b\log x)=1-c\in (0,1]$.
   The same argument as in Example \ref{neveu} leads to
   $F(s,t)=1-(1-s)^{e^{-abt}}\exp(cb^{-1}(e^{-abt}-1))$.
   Thus, Theorem \ref{main3} is applicable with $\alpha(t):=e^{-abt}$
   and $\beta(t):=\exp(cb^{-1}(e^{-abt}-1))$, $t\ge 0$. Clearly,
   these formulas for $\alpha(t)$ and $\beta(t)$ are in agreement with those
   from Lemma \ref{lemmaminfty} and Lemma \ref{betatlemma}, namely
   $\alpha(t)=e^{-aAt}=e^{-abt}$ and $\beta(t)=\exp((B-1)A^{-1}(1-\alpha(t)))
   =\exp(cb^{-1}(e^{-abt}-1))$, $t\ge 0$.
\end{example}
\begin{example} (Discrete Luria--Delbr\"uck distribution) \label{luria}
   Assume that $\xi$ has a discrete Luria--Delbr\"uck distribution with
   parameter $b\in (0,\infty)$, i.e. $f(s)=(1-s)^{b(1-s)/s}$, $s\in(0,1)$.
   Note that $f(0)=e^{-b}$ and $f(s)=1-(1-s)L((1-s)^{-1})$ for $s\in [0,1)$,
   where $L(1):=1-e^{-b}$ and $L(x):=x(1-x^{b/(1-x)})$ for $x\in(1,\infty)$.
   Note that $A:=\lim_{x\to\infty}L(x)/\log x=b$ and
   $B:=\lim_{x\to\infty}(L(x)-b\log x)=0$.
   Let $q=q(b)$ denote the extinction probability,
   i.e. the smallest fixed point of $f$ in the interval $[0,1]$. For all
   $\varepsilon\in (q,1)$,
   \[
   \int_\varepsilon^1 \frac{1}{s-f(s)}\,{\rm d}s
   \ =\ \int_{(1-\varepsilon)^{-1}}^\infty \frac{1}{x(L(x)-1)}\,{\rm d}x
   \ =\ \infty,
   \]
   since $L(x)\sim b\log x$ as $x\to\infty$. By the explosion criterion
   the associated branching process $Z=(Z_t)_{t\ge 0}$ does not explode.
   The functions $\alpha(.)$ and $\beta(.)$ are obtained as follows.
   By Lemma \ref{lemmaminfty}, $\alpha(t)=e^{-aAt}=e^{-abt}$, $t\ge 0$.
   Furthermore,
   \[
   \beta(t)\ =\ \exp\bigg(\frac{B-1}{A}(1-\alpha(t))\bigg)
   \ =\ \exp\bigg(\frac{e^{-abt}-1}{b}\bigg),\qquad t\ge 0.
   \]
   By Theorem \ref{main3}, as $n\to\infty$, the scaled process
   $X^{(n)}:=(Z_t^{(n)}/n^{e^{abt}})_{t\ge 0}$ converges in $D_E[0,\infty)$
   to a limiting process $X=(X_t)_{t\ge 0}$ such that $X_t$ has Laplace
   transform $\lambda\mapsto\exp(-\beta(t)\lambda^{e^{-abt}})$,
   $\lambda\ge 0$, and the semigroup $(T_t)_{t\ge 0}$ of $X$ satisfies
   $T_tg(x)=\me(g(x^{e^{abt}}X_t))$, $x,t\ge 0$, $g\in B(E)$.
\end{example}
The previous three examples are summarized in the following table.
\begin{center}
   \begin{tabular}{|c||c|c|c|}
   \hline
   Example & Example \ref{neveu} & Example \ref{neveugeneral} & Example \ref{luria}\\
   \hline\hline
   Parameters & --- & $b>0$, $c\ge 0$, $b+c\le 1$ & $0<b<\infty$\\
   \hline
   pgf $f(s)$ & $s+(1-s)\log(1-s)$ & $s+(1-s)(c+b\log(1-s))$ & $(1-s)^{b(1-s)/s}$\\
   \hline
   $L(x)$  & $1+\log x$ & $1-c+b\log x$ & $x(1-x)^{b/(1-x)}$\\
   \hline
   $\alpha(t)$ & $e^{-at}$ & $e^{-abt}$ & $e^{-abt}$\\
   \hline
   $\beta(t)$ & $1$ & $\exp(cb^{-1}(e^{-abt}-1))$ & $\exp((e^{-abt}-1)/b)$\\
   \hline
   \end{tabular}
\end{center}
\begin{remark}
   Theorem \ref{main3} does not cover the situation when the limit
   $A:=\lim_{x\to\infty}L(x)/\log x$ is either $0$ or $\infty$. We
   leave the analysis of the two boundary cases $A=0$ and $A=\infty$
   and of corresponding examples for future work.
\end{remark}
\subsubsection{The explosive case}
We briefly comment on the situation when the branching process may
explode in finite time. Note that explosion implies that
$A:=\lim_{x\to\infty}L(x)/\log x=\infty$. Thus, Theorem \ref{main3} is not
applicable. We have $F(1,t)<1$ for all $t>0$. For $t\ge 0$ let $G(.,t)$
denote the pgf of $Z_t$ conditioned on $Z_t<\infty$, i.e.
\[
G(s,t)\ :=\ \frac{F(s,t)}{F(1,t)},\qquad s\in [0,1], t\ge 0,
\]
In this situation a convergence result in the spirit of the previous
theorems, but with $F$ replaced by $G$, is obtained as follows.
For $t>0$ we have $\me(Z_t\,|\,Z_t<\infty)=G'(1-,t)=F'(1-,t)/F(1,t)=\infty$.
Thus, it is natural to assume that
$1-G(s,t)=(1-s)^{\alpha(t)}L_t((1-s)^{-1})$ for some $\alpha(t)\in (0,1]$
and some slowly varying function $L_t$. Assume now
furthermore that the limits
\[
\beta(t)\ :=\ \lim_{x\to\infty}L_t(x)\ \in\ (0,\infty),\qquad t\ge 0,
\]
exist. Then $\alpha(t)<1$ for all $t>0$. Now, for $t\ge 0$ and $n\in\nz$ choose
$a_n(t)$ such that $L_t(a_n(t))\sim (a_n(t))^{\alpha(t)}/(n\alpha(t))$ as
$n\to\infty$. Then $Z_t^{(n)}/a_n(t)$, conditioned on $Z_t<\infty$, converges
to $X_t$ in distribution as $n\to\infty$, where $X_t$ has Laplace transform
$\lambda\mapsto\exp(-\beta(t)\lambda^{\alpha(t)})$, $\lambda\ge 0$.
Example \ref{sibuya} below turns out to be in that regime.
\begin{example} \label{sibuya}
   Suppose that $\xi$ is Sibuya distributed with parameter
   $\alpha\in (0,1)$, i.e. $f(s)=1-(1-s)^\alpha$, $s\in [0,1]$.
   Note that $f(s)=1-(1-s)L((1-s)^{-1})$, where
   $L(x):=x^{1-\alpha}$ is regularly varying of index $1-\alpha$.
   From the backward equation
   \begin{eqnarray*}
      at
      & = & \int_s^{F(s,t)} \frac{1}{f(x)-x}\,{\rm d}x
      \ = \ \int_s^{F(s,t)} \frac{1}{1-x-(1-x)^\alpha}\,{\rm d}x\\
      & = & \bigg[\frac{-\log(1-(1-x)^{1-\alpha})}{1-\alpha}\bigg]_s^{F(s,t)}
      \ = \ \frac{1}{1-\alpha}\log\frac{1-(1-s)^{1-\alpha}}{1-(1-F(s,t))^{1-\alpha}},
      \qquad t\ge 0,
   \end{eqnarray*}
   we obtain the explicit solution
   \begin{equation} \label{sibuyaexplicit}
      F(s,t)\ =\ 1 - \Big(
         1- e^{-(1-\alpha)at} (1-(1-s)^{1-\alpha})
      \Big)^{\frac{1}{1-\alpha}},
      \qquad s\in [0,1], t\ge 0.
   \end{equation}
   We have $\pr(Z_t=\infty)=1-F(1,t)=(1-e^{-(1-\alpha)at})^{\frac{1}{1-\alpha}}$
   for $t\ge0$, so $0<\pr(Z_t=\infty)<1$ for all $t>0$.
   The time
   $T:=\inf\{t>0\,:\,Z_t=\infty\}$ of explosion satisfies
   $\pr(T<\infty)=\lim_{t\to\infty}\pr(Z_t=\infty)=1$, so $Z$ explodes
   in finite time almost surely. Note that $T$ has mean
   \[
   \me(T)
   \ =\ \int_0^\infty \pr(T>t)\,{\rm d}t
   \ =\ \int_0^\infty \pr(Z_t<\infty)\,{\rm d}t
   \ =\ \int_0^\infty (1-(1-e^{-(1-\alpha)at})^{\frac{1}{1-\alpha}})\,{\rm d}t.
   \]
   The substitution $x=1-e^{-(1-\alpha)at}$ yields
   \[
   \me(T)\ =\ \frac{1}{a(1-\alpha)}\int_0^1\frac{1-x^{\frac{1}{1-\alpha}}}{1-x}\,{\rm d}x
   \ =\ \frac{1}{a(1-\alpha)}\bigg(\Psi\bigg(\frac{2-\alpha}{1-\alpha}\bigg)+\gamma\bigg),
   \]
   where $\Psi=\Gamma'/\Gamma$ denotes the logarithmic derivative of the
   gamma function and $\gamma$ is the Euler--Mascheroni constant.

   Let $t>0$ in the following. Expansion of (\ref{sibuyaexplicit})
   yields 
   \begin{equation} \label{expansion}
      F(s,t)\ =\ F(1,t)
      - \frac{1}{1-\alpha}
      (1-e^{-(1-\alpha)at})^{\frac{\alpha}{1-\alpha}}e^{-(1-\alpha)at}
      (1-s)^{1-\alpha}
      + O((1-s)^{2(1-\alpha)}),\quad s\to 1.
   \end{equation}
   Rewriting (\ref{expansion}) in the form
   \begin{eqnarray*}
      1-G(s,t)
      & = & 1-\frac{F(s,t)}{F(1,t)}\\
      & = & \frac{(1-e^{-(1-\alpha)at})^{\frac{\alpha}{1-\alpha}}e^{-(1-\alpha)at}}
   {(1-\alpha)(1-(1-e^{-(1-\alpha)at})^{\frac{1}{1-\alpha}})}
   (1-s)^{1-\alpha} + O((1-s)^{2(1-\alpha)}),
   \qquad s\to 1,
   \end{eqnarray*}
   yields $\alpha(t)=1-\alpha$ for all $t>0$ and
   \[
   \beta(t)\ :=\ \lim_{x\to\infty}L_t(x)
   \ =\ \frac{(1-e^{-(1-\alpha)at})^{\frac{\alpha}{1-\alpha}}e^{-(1-\alpha)at}}
  {(1-\alpha)(1-(1-e^{-(1-\alpha)at})^{\frac{1}{1-\alpha}})},\qquad t>0.
   \]
   Thus, the sequence $a_n(t):=(n\alpha(t)\beta(t))^{1/\alpha(t)}$ satisfies
   $L_t(a_n(t))\sim (a_n(t))^{\alpha(t)}/(n\alpha(t))$ as $n\to\infty$ and
   it follows that $X_t^{(n)}:=Z_t^{(n)}/a_n(t)$, conditioned on
   $Z_t<\infty$, converges to $X_t$ in distribution as $n\to\infty$,
   where $X_t$ has Laplace transform
   $\lambda\mapsto\exp(-\beta(t)\lambda^{\alpha(t)})$, $\lambda\ge 0$.
\end{example}
We leave the study of further examples of branching processes with
explosion similar to those of Example \ref{sibuya} to the interested
reader. One may for instance study the pgf $f(s):=\frac{2}{\pi}\arcsin s$,
$s\in [0,1]$, occurring in Pakes \cite[p.~276, Example 4.5]{pakes}.
A further example is the offspring distribution
$p_k=\frac{\sqrt{\pi}}{4}\Gamma(k)/\Gamma(k+3/2)$, $k\in\nz$,
in which case the offspring pgf has the form
$f(s)=1-\sqrt{(1-s)/s}\arcsin\sqrt{s}$.

Let us finally discuss the situation when
\begin{equation} \label{gcond}
   1-G(s,t)\ =\ (1-s)L_t((1-s)^{-1}),\qquad t\ge 0,
\end{equation}
for some slowly varying function $L_t$. Note that (see, for example,
Bingham and Doney \cite[Theorem A]{binghamdoney}) (\ref{gcond}) is
equivalent to $\sum_{k=0}^n\pr(Z_t>k\,|\,Z_t<\infty)\sim L_t(n)$ as
$n\to\infty$, which is Condition (ii) in Rogozin's relative stability
theorem (see, for example, Bingham, Goldie and Teugels
\cite[Theorem 8.8.1]{binghamgoldieteugels}). Let $(a_n(t))_{n\in\nz}$
be a sequence such that $L_t(a_n(t))\sim a_n(t)/n$ as $n\to\infty$.
Then, by Theorem 8.8.1 of \cite{binghamgoldieteugels},
$Z_t^{(n)}/a_n(t)|_{Z_t<\infty}\to 1$ in probability as $n\to\infty$. Thus,
in this situation we cannot have a non-degenerate limit. The following example
fits into this regime. In this example the limits
\[
\gamma(t)\ :=\ \lim_{x\to\infty}\frac{L_t(x)}{\log x}
\ \in\ (0,\infty),\qquad t\ge 0,
\]
exist.
\begin{example}
Define $f(0):=0$, $f(1):=1$ and
\[
f(s)\ :=\ 1+\frac{s}{\log(1-s)},\qquad s\in (0,1).
\]
It is easily seen that $f$ has Taylor expansion $f(s)=\sum_{n\ge 1}p_ns^n$
with nonnegative coefficients
\[
p_n\ :=\ (-1)^{n-1}\int_0^1{x\choose n}\,{\rm d}x
\ =\ \frac{1}{n!}\int_0^1 x\frac{\Gamma(n-x)}{\Gamma(1-x)}\,{\rm d}x\ \ge\ 0,
\qquad n\in\nz.
\]
Thus, $f$ is the pgf of some random variable $\xi$ taking values in $\nz$.
From $p_0=0$ it follows that the associated continuous-time branching process
$Z=(Z_t)_{t\ge 0}$ has extinction probability $q=0$. Note that
$f(s)=1-(1-s)L((1-s)^{-1})$, where $L(x):=(x-1)/\log x$,
$x>1$, is regularly varying of index $1$.
For all $\varepsilon\in (q,1)=(0,1)$,
\begin{eqnarray*}
   \int_\varepsilon^1 \frac{1}{s-f(s)}\,{\rm d}s
   & = & \int_\varepsilon^1 \frac{1}{s-1-\frac{s}{\log(1-s)}}\,{\rm d}s
   \ = \ [\log(s+(1-s)\log(1-s))]_\varepsilon^1\\
   & = & -\log(\varepsilon+(1-\varepsilon)\log(1-\varepsilon))
   \ < \ \infty,
\end{eqnarray*}
which shows that $Z$ explodes. The Kolmogorov backward equation is
\begin{eqnarray*}
   at
   & = & \int_s^{F(s,t)}\frac{1}{f(u)-u}\,{\rm d}u
   \ = \ \int_s^{F(s,t)}\frac{1}{1-u+\frac{u}{\log(1-u)}}\,{\rm d}u\\
   & = & [-\log(u+(1-u)\log(1-u))]_s^{F(s,t)}\\
   & = & \log\frac{s+(1-s)\log(1-s)}{F(s,t)+(1-F(s,t))\log(1-F(s,t))}
\end{eqnarray*}
or, equivalently,
\[
F(s,t)+(1-F(s,t))\log(1-F(s,t))\ =\ e^{-at}(s+(1-s)\log(1-s))\ =:\ h(s,t).
\]
It is straightforward to check that this equation has the solution
\[
F(s,t)\ =\ 1-\exp\bigg(1+W\bigg(\frac{h(s,t)-1}{e}\bigg)\bigg),
\qquad s\in [0,1), t\ge 0,
\]
where $W=W_{-1}$ denotes the lower branch of the Lambert $W$ function
satisfying $W(h)e^{W(h)}=h$ and being real valued on $[-1/e,0)$.
Note that
$\pr(Z_t=\infty)=1-F(1,t)=\exp(1+W((e^{-at}-1)/e))
$
for $t\ge 0$, so $0<\pr(Z_t=\infty)<1$ for $t>0$. The time
$T:=\inf\{t>0:Z_t=\infty\}$ of explosion satisfies
$\pr(T<\infty)=\lim_{t\to\infty}\pr(Z_t=\infty)=\exp(1+W(-1/e))=\exp(0)=1$,
so $Z$ explodes in finite time almost surely. Note that $T$ has mean
\[
   \me(T)
   \ = \ \int_0^\infty \pr(Z_t<\infty)\,{\rm d}t
   \ = \ \int_0^\infty
         \bigg(
            1-\exp\bigg(1+W\bigg(\frac{e^{-at}-1}{e}\bigg)\bigg)
         \bigg)\,{\rm d}t.
\]
The substitution $x=1-e^{-at}$ ($\Rightarrow$ $t=-\frac{1}{a}\log(1-x)$ and
$\frac{{\rm d}t}{{\rm d}x}=\frac{1}{a(1-x)}$) leads to
\[
\me(T)\ =\
\frac{1}{a}\int_0^1\frac{1-\exp(1+W(-x/e))}{1-x}\,{\rm d}x.
\]
The function below the integral has a singularity at $x=1$.
From $1+W(-x/e)\sim \sqrt{2(1-x)}$ as $x\to 1$ it follows that the
function below the integral behaves asymptotically as
$\sqrt{2/(1-x)}$ as $x\to 1$, which yields $\me(T)<\infty$.\\
Let $G(s,t):=F(s,t)/F(1,t)$
denote the pgf of $Z_t$ conditioned on $Z_t<\infty$.
A somewhat tedious but straightforward calculation shows that
$1-G(s,t)\ =\ (1-s)L_t((1-s)^{-1})$, where $L_t$ is slowly
varying with
\[
\gamma(t)\ :=\ \lim_{x\to\infty}\frac{L_t(x)}{\log x}\ =\
\frac{w}{1+w-(w+1)^2e^{at}}
\]
with $w:=W(\frac{e^{-at}-1}{e})$. For $t\ge 0$ let
$(a_n(t))_{n\in\nz}$ be a sequence such that
$L_t(a_n(t))\sim a_n(t)/n$ as $n\to\infty$. Then, as explained before,
for every $t\ge 0$,
conditional on $Z_t<\infty$, $Z_t^{(n)}/a_n(t)\to 1$ in probability as
$n\to\infty$. A concrete sequence $(a_n(t))_{n\in\nz}$ is
$a_n(t):=\gamma(t)n\log n$, since, in this case,
$L_t(a_n(t))=L_t(\gamma(t)n\log n)\sim L_t(n\log n)
\sim\gamma(t)\log(n\log n)
\sim\gamma(t)\log n\ =\ a_n(t)/n$ as $n\to\infty$.
\end{example}
\subsection{Proof of Theorem \ref{main1}} \label{proofs}
The proof of Theorem \ref{main1} is quite natural and can be summarised
as follows. An application of the multivariate central limit theorem
yields the convergence of the finite-dimensional distributions. The
convergence in $D_\rz[0,\infty)$ is then established using a criterion
of Aldous \cite{aldous}. The following proof is relatively short and
elegant.
\begin{proof} (of Theorem \ref{main1})
   Let us compute for $s,t\ge 0$ the covariance of $Z_s$ and $Z_{s+t}$.
   For $k\in\nz_0$,
   \begin{eqnarray*}
      &   & \hspace{-15mm}\me((Z_s-m(s))(Z_{s+t}-m(s+t))\,|\,Z_s=k)\\
      & = & (k-m(s))\me(Z_{s+t}-m(s+t)\,|\,Z_s=k)
      \ = \ (k-m(s))\me(Z_t^{(k)}-m(s+t))\\
      & = & (k-m(s))(km(t)-m(s)m(t))
      \ = \ m(t)(k-m(s))^2.
   \end{eqnarray*}
   Thus, $\me((Z_s-m(s))(Z_{s+t}-m(s+t))\,|\,Z_s)=m(t)(Z_s-m(s))^2$
   almost surely. Taking expectation yields ${\rm Cov}(Z_s,Z_{s+t})
   =m(t){\rm Var}(Z_s)=m(t)\sigma^2(s)$.

   In order to verify the convergence $X^{(n)}\stackrel{\rm fd}{\to}X$
   of the finite-dimensional distributions fix $k\in\nz$ and
   $0\le t_1<\cdots<t_k<\infty$, define the $\rz^k$-valued random
   variable $Y:=(Z_{t_1}-m(t_1),\ldots,Z_{t_k}-m(t_k))$
   and let $Y_1,Y_2,\ldots$ be independent copies of $Y$. By the
   branching property, $(X_{t_1}^{(n)},\ldots,X_{t_k}^{(n)})
   =((Z_{t_1}^{(n)}-nm(t_1))/\sqrt{n},\ldots,(Z_{t_k}^{(n)}-nm(t_k))/\sqrt{n})$
   has the same distribution as $(Y_1+\cdots+Y_n)/\sqrt{n}$, which by the
   multivariate central limit theorem (see, for example,
   \cite[p.~16, Example 2.18]{vandervaart}) converges in distribution as
   $n\to\infty$ to a centered normal distribution $N(0,\Sigma)$ with
   covariance matrix $\Sigma=(\sigma_{i,j})_{1\le i,j\le k}$ having
   entries $\sigma_{i,j}:=\me((Z_{t_i}-m(t_i))(Z_{t_j}-m(t_j)))
   ={\rm Cov}(Z_{t_i},Z_{t_j})=m(|t_i-t_j|)\sigma^2(t_i\wedge t_j)$.
   Thus the convergence $X^{(n)}\stackrel{\rm fd}{\to} X$ of the
   finite-dimensional distributions holds.

   The convergence $X^{(n)}\to X$ in $D_\rz[0,\infty)$ is achieved as
   follows. Define the processes $M^{(n)}:=(M_t^{(n)})_{t\ge 0}$,
   $n\in\nz$, and $M:=(M_t)_{t\ge 0}$ via
   \[
   M_t^{(n)}\ :=\ \frac{X_t^{(n)}}{m(t)}
   \ =\ \sqrt{n}\bigg(\frac{Z_t^{(n)}}{nm(t)}-1\bigg)
   \quad\mbox{and}\quad
   M_t\ :=\ \frac{X_t}{m(t)},\qquad n\in\nz, t\ge 0.
   \]
   Then, $M,M^{(1)},M^{(2)},\ldots$ are martingales and $M$ is continuous,
   since the Gaussian process $X$ is continuous and $m(.)$ is continuous.
   Since $\me((M_t^{(n)})^2)={\rm Var}(M_t^{(n)})
   ={\rm Var}(Z_t^{(n)})/(n(m(t))^2)=\sigma^2(t)/(m(t))^2<\infty$
   does not depend on $n\in\nz$, we conclude that, for each $t\ge 0$, the
   family $\{M_t^{(n)}:n\in\nz\}$ is uniformly integrable. The convergence
   $M^{(n)}\to M$ in $D_\rz[0,\infty)$ therefore follows from Aldous'
   criterion \cite[Proposition 1.2]{aldous}. Since the map $t\mapsto m(t)$
   is continuous and deterministic it follows by multiplication with $m(t)$
   that $X^{(n)}\to X$ in $D_\rz[0,\infty)$.\hfill$\Box$
\end{proof}
\subsection{Proofs concerning Theorem \ref{main2}}
This section contains the proofs of Lemma \ref{lemmaoffspringdist} and
Theorem \ref{main2}.
\begin{proof} (of Lemma \ref{lemmaoffspringdist})
   The proof distinguishes the critical and non-critical case. Both cases
   are handled with different techniques.
   The representation in the critical case (for age-dependent
   branching processes) follows via an equivalence for the extinction
   probability from a combination of the results of Slack
   \cite[Theorem 1]{slack} and Vatutin \cite[Theorem 1]{vatutin2}.
   The following more elementary proof (see Case 1) is
   based on the backward equation and does not use extinction probabilities.

   \vspace{2mm}

   {\bf Case 1.} ($\lambda=0$) Let $t\ge 0$. In the critical case
   Kolmogorov's backward equation is
   \[
   at
   \ =\ \int_s^{F(s,t)}\frac{1}{f(x)-x}\,{\rm d}x
   \ =\ \int_s^{F(s,t)} \frac{1}{(1-x)^{\alpha}L((1-x)^{-1})}\,{\rm d}x,
   \qquad s\in [0,1].
	\]
   Since the map $x\mapsto f(x)-x$ is non-negative and non-increasing on
   $[0,1]$ it follows that
   \[
   \frac{F(s,t)-s}{(1-s)^{\alpha}L((1-s)^{-1})}
   \ \le\ at
   \ \le\ \frac{F(s,t)-s}{(1-F(s,t))^{\alpha}L((1-F(s,t))^{-1})}
   \]
   and, hence,
   \begin{eqnarray*}
      \limsup_{s\to 1}\frac{F(s,t)-s}{(1-s)^{\alpha}L((1-s)^{-1})}
      \ \le\ at
      & \le & \liminf_{s\to 1}
              \frac{F(s,t)-s}{(1-F(s,t))^{\alpha}L((1-F(s,t))^{-1})}\\
      & = & \liminf_{s\to 1} \frac{F(s,t)-s}{(1-s)^{\alpha}L((1-s)^{-1})},	
   \end{eqnarray*}
   where the last equality holds since $1-F(s,t)\sim 1-s$ as $s\to 1$. Thus,
   $\lim_{s\to 1}(F(s,t)-s)/((1-s)^{\alpha}L((1-s)^{-1}))=at$.

   \vspace{2mm}
	
   {\bf Case 2.} ($\lambda \ne 0$) Fix $t\ge 0$.
   Set $h_1(s):=(1-s)m(t)-(1-F(s,t))$ and
   $h_2(s):=(1-s)^\alpha L((1-s)^{-1})$ for $s\in [0,1)$.
   We have to verify that $\lim_{s\to 1}h_1(s)/h_2(s)=c(t)$, where $c(t)$
   is defined via (\ref{functionc}).
   By the Kolmogorov forward and backward equations,
   $h_1'(s)=-m(t)+\frac{\partial}{\partial s}F(s,t)=-m(t)+(f(F(s,t))-F(s,t))/(f(s)-s)$.
   Moreover,
   $h_2'(s)=(1-s)^{\alpha-1}L((1-s)^{-1})(L'((1-s)^{-1})(1-s)^{-1}/L((1-s)^{-1})-\alpha)$.
   From Assumption (\ref{assumption}), the asymptotics $1-F(s,t)\sim m(t)(1-s)$
   as $s\to 1$ and $(m(t))^\alpha=m(\alpha t)$ it follows that
   \begin{eqnarray}
      &   & \hspace{-10mm}m(\alpha t)-m(t)
      \ = \
		\lim_{s \to 1} \bigg( \frac{(1-F(s,t))m-(1-f(F(s,t)))}{(1-s)^{\alpha}L((1-s)^{-1})}
		- m(t)\frac{(1-s)m-(1-f(s))}{(1-s)^{\alpha}L((1-s)^{-1})} \bigg) \nonumber \\
		&=&
		\lim_{s \to 1} \bigg( (1-m)\frac{(1-s)m(t)-(1-F(s,t))}{(1-s)^{\alpha}L((1-s)^{-1})}
		\nonumber \\
		&& \hspace{1cm}+ \frac{m(t)(1-f(s)-(1-s))-(1-f(F(s,t)))+(1-F(s,t))}{(1-s)^{\alpha}L((1-s)^{-1})} \bigg) \nonumber \\
		&=&
		\lim_{s\to 1} \bigg( (1-m)\frac{(1-s)m(t)-(1-F(s,t))}{(1-s)^{\alpha}L((1-s)^{-1})}
+ \frac{-m(t)(f(s)-s)+f(F(s,t))-F(s,t)}{(1-s)^{\alpha}L((1-s)^{-1})} \bigg) \nonumber \\
		&=&
		(m-1)\lim_{s \to 1} \bigg(\alpha \frac{-m(t)(f(s)-s)+(f(F(s,t))-F(s,t))}{\alpha(m-1)(1-s)^{\alpha}L((1-s)^{-1})}
- \frac{(1-s)m(t)-(1-F(s,t))}{(1-s)^{\alpha}L((1-s)^{-1})} \bigg) \nonumber \\
      & = & (m-1)\lim_{s\to 1}
            \bigg(
		    \alpha\frac{h_1'(s)}{h_2'(s)+R(s)} - \frac{h_1(s)}{h_2(s)}
            \bigg).
            \label{local_7}
   \end{eqnarray}
   Using
   \[
   \frac{(1-m)(1-s)}{f(s)-s}
   \ =\ \frac{1-m}{1-m+(1-s)^{\alpha-1}L((1-s)^{-1})}
   \]	
   we see that $R(s)$ is given by
   \begin{eqnarray*}
      R(s)
      & = & -\alpha(1-s)^{\alpha-1}L((1-s)^{-1})
            \frac{1-m}{1-m + (1-s)^{\alpha-1}L((1-s)^{-1})}\\
      &   & \hspace{10mm}
            -(1-s)^{\alpha-1}L((1-s)^{-1})
            \bigg(\frac{L'((1-s)^{-1})(1-s)^{-1}}{L((1-s)^{-1})}-\alpha\bigg)\\
      & = & \alpha (1-s)^{\alpha-1} L((1-s)^{-1})
            \bigg(
               1-\frac{1-m}{1-m+(1-s)^{\alpha-1}L((1-s)^{-1})}\\
      &   &    \hspace{60mm}-\frac{L'((1-s)^{-1})(1-s)^{-1}}{\alpha L((1-s)^{-1})}
            \bigg).
   \end{eqnarray*}
   From Lamperti \cite[Theorem 2]{lamperti3} it follows that
   $\lim_{x\to\infty}xL'(x)/L(x)=0$. Applying this relation with
   $x:=(1-s)^{-1}$ yields
   \begin{equation}
      \lim_{s\to 1}\frac{R(s)}{h_2'(s)}
      \ =\ \lim_{s \to 1}
           \frac{\alpha\big(1-\frac{1-m}{1-m+(1-s)^{\alpha-1}L((1-s)^{-1})}-\frac{L'((1-s)^{-1})(1-s)^{-1}}{\alpha L((1-s)^{-1})}\big)}{\frac{L'((1-s)^{-1})(1-s)^{-1}}{L((1-s)^{-1})}-\alpha}
      \ =\ 0. \label{local_8}
   \end{equation}
   The three quantities $h_1(s)$, $h_2(s)$ and $(m(\alpha t)-m(t))/(m-1)$
   are non-negative, so from (\ref{local_7}) necessarily $\liminf_{s\to 1} h_1'(s)/(h_2'(s)+R(s))\ge 0$,
   leading to the boundary $h_1'(s)/(h_2'(s)+R(s))\ge(1-\delta)h_1'(s)/h_2'(s)$
   for any $0<\delta<(\alpha-1)/\alpha$ and $s$ sufficiently large. Then
   \[
   \frac{m(\alpha t)-m(t)}{m-1}
   \ \ge\ \limsup_{s\to 1}
          \bigg(\alpha(1-\delta)\frac{h_1'(s)}{h_2'(s)}-\frac{h_1(s)}{h_2(s)}\bigg),
   \]
   and the second part of Lemma \ref{lemmalhospital} provides
   \begin{equation}
      \limsup_{s \to 1}\frac{h_1(s)}{h_2(s)}
      \ \le\ \frac{m(\alpha t)-m(t)}{m-1}. \label{local_9}
   \end{equation}
   Now (\ref{local_7}), (\ref{local_8}) and (\ref{local_9}) yield
   \begin{eqnarray*}
      \frac{m(\alpha t)-m(t)}{m-1}
      & = & \lim_{s\to 1}
            \bigg(
               \bigg(\alpha\frac{h_1'(s)}{h_2'(s)}-\frac{h_1(s)}{h_2(s)}\bigg)
               \frac{h_2'(s)}{h_2'(s)+R(s)}
               - \frac{h_1(s)}{h_2(s)}\frac{R(s)}{h_2'(s)+R(s)}
           \bigg)\\
      & = & \lim_{s\to 1}
            \bigg(\alpha\frac{h_1'(s)}{h_2'(s)}-\frac{h_1(s)}{h_2(s)}\bigg).
	\end{eqnarray*}
   The claim follows again from Lemma \ref{lemmalhospital} in the appendix.
   Note that Lemma \ref{lemmalhospital} is applicable in both cases due to
   Lemma \ref{lemmaconvpgf}.\hfill$\Box$
\end{proof}
\begin{proof} (of Theorem \ref{main2})
   The proof is divided into four parts. The first part establishes the
   convergence of the one-dimensional distributions. The second and third
   part give two auxiliary results, one is about the normalizing sequence
   $(a_n)_{n\in\nz}$ and the other is a
   kind of upper bound 
   for the process, used in the final part to conclude the convergence in
   $D_\rz[0,\infty)$.

   \vspace{2mm}

   {\bf Part 1.} (Convergence of the one-dimensional distributions)

   \vspace{2mm}

   {\bf Version 1.} (based on $\alpha$-stable theory)
   Fix $t\in [0,\infty)$, define $Y:=Z_t$ for convenience and let
   $Y_1,Y_2,\ldots$ be independent copies of $Y$.

   Assume first that $\alpha\in (1,2)$. Then, by Bingham and Doney
   \cite[Theorem A]{binghamdoney}, Eq.~(\ref{betatdef}) is equivalent to
   $\pr(Y>x)\sim c(t)(-\Gamma(1-\alpha))^{-1}L(x)x^{-\alpha}$, $x\to\infty$.
   In particular, the map $x\mapsto \pr(Y>x)$ is regularly varying
   (at infinity) with index $-\alpha$. By Theorem 1 (ii) $\Rightarrow$ (i) of
   Geluk and de Haan \cite{gelukdehaan} (note that
   $p=1$ since $Y$ is nonnegative) it follows that the distribution function
   of $Y$ is in the domain of attraction of an $\alpha$-stable distribution,
   i.e. $\pr(Y\le .)\in D_\alpha$. The results at the top of
   p.~174 in \cite{gelukdehaan} on the
   choice of the normalizing sequences
   $(a_n)_{n\in\nz}$ and $(b_n)_{n\in\nz}$ furthermore
   show that, if we choose $a_n$ such that $L(a_n)\sim a_n^\alpha/(\alpha n)$
   as $n\to\infty$ and $b_n:=n\me(Y)/a_n=nm(t)/a_n$, then
   $(Z_t^{(n)}-nm(t))/a_n\stackrel{d}{=}(Y_1+\cdots+Y_n)/a_n-b_n\to X_t$
   in distribution as $n\to\infty$, where $X_t$ is $\alpha$-stable with
   characteristic function $u\mapsto\exp(c(t)(-iu)^\alpha/\alpha)$, $u\in\rz$.
   Thus, the convergence of the one-dimensional distributions holds.

   The case $\alpha=2$ is handled similarly by noting that
   (\ref{consequence}) is then equivalent (see \cite{binghamdoney}) to
   $\me(1_{\{Y\le x\}}Y^2)\sim 2c(t)L(x)$ as
   $x\to\infty$ such that we can apply Theorem 2
   of Geluk and de Haan \cite{gelukdehaan}.

   \vspace{2mm}

   {\bf Version 2.} (based on Laplace transforms)
   Fix $t\in [0,\infty)$. For every $n\in\nz$ the real valued random
   variable $X_t^{(n)}$ has Laplace transform
   \begin{eqnarray*}
      \eta\ \mapsto\ \me(\exp(-\eta X_t^{(n)}))
      & = & \me(\exp(-\eta a_n^{-1}(Z_t^{(n)}-nm(t))))
      \ = \ s_n^{-nm(t)} (F(s_n,t))^n,\quad \eta\ge 0,
   \end{eqnarray*}
   where $s_n:=\exp(-\eta/a_n)$. In order to verify that
   $\lim_{n\to\infty}\me(\exp(-\eta X_t^{(n)}))=\me(\exp(-\eta X_t))$
   assume without loss of generality that $\eta>0$.
   Taking logarithm yields
   \begin{equation} \label{log}
      \log \me(\exp(-\eta X_t^{(n)}))
      \ = \ -nm(t)\log s_n + n\log F(s_n,t)
      \ = \ \eta m(t)\frac{n}{a_n} - nx_n + O(nx_n^2),
   \end{equation}
   where $x_n:=1-F(s_n,t)$. Note that $s_n\to 1$ and, hence,
   $x_n\to 0$ as $n\to\infty$. More precisely, by assumption,
   \[
   x_n
   \ =\ m(t)(1-s_n) - c(t)(1-s_n)^\alpha L((1-s_n)^{-1})(1+o(1)),
   \qquad n\to\infty.
   \]
   From $1-s_n=1-\exp(-\eta/a_n)=\eta/a_n+O(1/a_n^2)$ and
   $L((1-s_n)^{-1})\sim L(a_n/\eta)\sim L(a_n)\sim a_n^\alpha/(\alpha n)$
   we conclude that
   \begin{eqnarray*}
      x_n
      & = & m(t)\bigg(\frac{\eta}{a_n}+O\bigg(\frac{1}{a_n^2}\bigg)\bigg)
            - c(t)\bigg(\bigg(\frac{\eta}{a_n}\bigg)^\alpha + O\bigg(\frac{1}{a_n^{\alpha+1}}\bigg)\bigg)
            \cdot \frac{a_n^\alpha}{\alpha n}(1+o(1))\\
      & = & \frac{\eta m(t)}{a_n}
            -c(t)\frac{\eta^\alpha}{\alpha n} + O\bigg(\frac{1}{a_n^2}\bigg) + o\bigg(\frac{1}{n}\bigg)
      \ = \ \frac{\eta m(t)}{a_n}-c(t)\frac{\eta^\alpha}{\alpha n}
            +o\bigg(\frac{1}{n}\bigg),
   \end{eqnarray*}
   since
   $n/a_n^2\sim (\alpha L(a_n)a_n^{2-\alpha})^{-1}\to 0$ as $n\to\infty$.
   It follows that (\ref{log}) converges to $c(t)\eta^\alpha/\alpha
   =\log\me(\exp(-\eta X_t))$ as $n\to\infty$.
   Thus, the Laplace transform of $X_t^{(n)}$ converges pointwise
   on $[0,\infty)$ to the Laplace transform of $X_t$. In other words,
   the moment generating function of $X_t^{(n)}$ is finite on the interval
   $I:=(-\infty,0]$ and converges pointwise on $I$
   as $n\to\infty$ to the moment generating function of $X_t$. This
   implies (see, for example, Billingsley
   \cite[p.~397, Problem 30.4]{billingsley} or Kallenberg \cite[p.~101,
   Exercise 9]{kallenberg}) the convergence $X_t^{(n)}\to X_t$ in
   distribution as $n\to\infty$.

   \vspace{2mm}

   {\bf Part 2.} (Asymptotic relation for $(a_n)_{n\in\nz}$)
   Let $(\varepsilon_n)_{n\in\nz}$ be an arbitrary sequence of positive
   real numbers converging to zero as $n\to\infty$. For $n\in\nz$ and $T>0$
   define $S_{n,T}:=[-\varepsilon_nn/a_n,\varepsilon_nn/a_n]\times [0,T]$,
   where $(a_n)_{n\in\nz}$ is the normalizing
   sequence satisfying $a_n/(L(a_n))^{1/\alpha}\sim(\alpha n)^{1/\alpha}$
   as $n\to\infty$.
   Bojanic and Seneta \cite[p.~308]{bojanicseneta} provide the
   existence of another slowly varying function $L^\ast$ such that
   $a_n\sim(\alpha n)^{1/\alpha}L^{\ast}(n^{1/\alpha})$ as $n\to\infty$.
   Set $h(n):=(\alpha n)^{1/\alpha}L^{\ast}(n^{1/\alpha})/a_n$ for $n\in\nz$
   and $h(r):=h(\lfloor r\rfloor)$ for $r\in\rz$, $r\ge 1$. Then the
   asymptotic relation simply means $\lim_{r\to\infty}h(r)=1$. From
   \begin{eqnarray}
      \lim_{n\to\infty}\inf_{(x,s)\in S_{n,T}}(nm(s)+xa_n)\ =\ \infty
      \label{eq_infconv}
   \end{eqnarray}
   it follows that
   $\sup_{(x,s)\in S_{n,T}}|h(nm(s)+xa_n)-1|\to 0$ as
   $n$ tends to infinity. Furthermore, $\lim_{n\to\infty}
   \sup_{(x,s)\in S_{n,T}}|xa_n/n|\le\lim_{n\to\infty}\varepsilon_n=0$
   implies $\lim_{n\to\infty}\sup_{(x,s)\in S_{n,T}}
   |(m(s)+xa_n/n)^{1/\alpha}-(m(s))^{1/\alpha}|=0$ as
   well as, using the uniform convergence theorem for slowly varying
   functions (see, for example, Bingham, Goldie and Teugels
   \cite[Theorem 1.2.1]{binghamgoldieteugels} or Bojanic and Seneta
   \cite{bojanicseneta})
   \begin{eqnarray*}
      \lim_{n\to\infty}\sup_{(x,s)\in S_{n,T}}
      \bigg|
         \frac{L^\ast(n^{1/\alpha}(m(s)+xa_n/n)^{1/\alpha})}{L^\ast(n^{1/\alpha})}
         -1
      \bigg|
      \ =\ 0.
   \end{eqnarray*}
   Having bounded limits, the listed uniformly convergent sequences
   are uniformly bounded and thus their product converges again
   uniformly, yielding
   \begin{eqnarray}
      &   & \hspace{-10mm}\lim_{n\to\infty}\sup_{(x,s)\in S_{n,T}}
	        \bigg|\frac{a_{nm(s)+xa_n}}{a_n}
            - (m(s))^{1/\alpha}\bigg|\nonumber\\
      & = & \lim_{n\to\infty}\sup_{(x,s)\in S_{n,T}}
            \bigg|
               \frac{h(n)}{h(nm(s)+xa_n)}\frac{L^\ast((nm(s)+xa_n)^{1/\alpha})}{L^\ast(n^{1/\alpha})}
               \bigg(m(s)+\frac{xa_n}{n}\bigg)^{1/\alpha} - (m(s))^{1/\alpha}
            \bigg|\nonumber\\
	  & = & 0.
   \label{normalizingconstconv}
   \end{eqnarray}

   \vspace{2mm}

   {\bf Part 3.} (Kind of upper bound for $X_t^{(n)}$)
   In this part it is shown that for each $T>0$ there exists a sequence
   $(\varepsilon_n)_{n\in\nz}$ of positive real numbers with
   $\lim_{n\to\infty}\varepsilon_n=0$ such that
   \begin{equation}
      \lim_{n\to\infty}
      \pr\bigg(
      \sup_{t\in [0,T]} |X_t^{(n)}| \ge\frac{\varepsilon_nn}{a_n}
      \bigg)
      \ =\ 0.
   \label{pwbound}
   \end{equation}
   Let $\delta:=0$ if $m<1$ and $\delta:=T$ if $m\ge 1$. Then,
   for any sequence $(\varepsilon_n)_{n\in\nz}$ of positive real numbers,
   \[
   \pr\bigg(\sup_{t\in [0,T]}|X_t^{(n)}|\ge\frac{\varepsilon_nn}{a_n}\bigg)
   \ \le\
   \pr\bigg(
   \sup_{t\in [0,T]} \bigg|\frac{b_nX_t^{(n)}}{m(t)}\bigg| \ge \frac{\varepsilon_n n}{m(\delta)}
   \bigg).
   \]
   Applying Doob's submartingale inequality to the martingale
   $(a_nX_t^{(n)}/m(t))_{t\ge 0}=(Z_t^{(n)}/m(t)-n)_{t\ge 0}$ yields
   \begin{eqnarray*}
	   \pr\bigg( \sup_{t\in [0,T]}\bigg|\frac{a_nX_t^{(n)}}{m(t)}\bigg|
       \ge \frac{\varepsilon_n n}{m(\delta)}\bigg)
	   & \le & \frac{m(\delta)}{\varepsilon_nn}
               \me\bigg(\bigg| \frac{Z_T^{(n)}}{m(T)}-n\bigg|\bigg)
       \ =\ \frac{m(\delta)}{m(T)}
             \frac{1}{\varepsilon_n}
             \me\bigg(\bigg|\frac{Z_T^{(n)}}{n}-m(T)\bigg|\bigg).
   \end{eqnarray*}
   By the law of large numbers the latter expectation converges to
   $0$ as $n\to\infty$. Thus the sequence $(\varepsilon_n)_{n\in\nz}$
   can be chosen such that $\lim_{n\to\infty}\varepsilon_n=0$ and such
   that the right-hand side still converges to $0$, which implies that
   (\ref{pwbound}) holds for the particular sequence $(\varepsilon_n)_{n\in\nz}$.

   \vspace{2mm}

   {\bf Part 4.} (Convergence in $D_{\rz}[0,\infty)$)
   In general, the processes $X^{(n)}$ and $X$ are time-in\-ho\-mo\-geneous.
   Let $Y^{(n)}:=(X_t^{(n)},t)_{t\ge 0}$ and $Y:=(X_t,t)_{t\ge 0}$
   denote the space-time processes of $X^{(n)}$ and $X$ respectively.
   According to Revuz and Yor \cite[p.~85, Exercise (1.10)]{revuzyor}
   the processes $Y^{(n)}$ and $Y$ are time-homogeneous Markov processes
   with state space $S:=\rz\times [0,\infty)$. Recall that
   $S_{n,T}=[-\varepsilon_nn/a_n,\varepsilon_nn/a_n]\times[0,T]$,
   where $(\varepsilon_n)_{n\in\nz}$ is the sequence defined in Part 4.
   In terms of $Y^{(n)}$, (\ref{pwbound}) is simply
   \begin{equation}
      \lim_{n\to\infty}\pr\big(Y_t^{(n)}\in S_{n,T}, 0\le t\le T\big)\ =\ 1.
      \label{local_01}
   \end{equation}
   Corollary 8.7 on p.~232 of Ethier and Kurtz \cite{ethierkurtz} states
   that (\ref{local_01}) jointly with the uniform convergence of the
   semigroups on the restricted area $S_{n,T}$ implies the convergence
   of $Y^{(n)}$ to $Y$ in $D_S[0,\infty)$, hence the desired convergence
   of $X^{(n)}$ to $X$ in $D_{\rz}[0,\infty)$. Thus it remains to show
   that for each $f\in\widehat{C}(S)$, the space of
   real valued continuous functions on $S$ vanishing at infinity, and $t\in[0,T]$
   \begin{eqnarray}
      \lim_{n\to\infty}\sup_{(x,s)\in S_{n,T}}
      |\widetilde{T}_t^{(n)}f(x,s)-\widetilde{T}_tf(x,s)|
      \ =\ 0, \label{semigroupconv}
   \end{eqnarray}
   where $(\widetilde{T}_t^{(n)})_{t\ge 0}$ and $(\widetilde{T}_t)_{t\ge 0}$
   denote the semigroups of $Y^{(n)}$ and $Y$ respectively, that is
   $\widetilde{T}_t^{(n)}f(x,s)=\me(f(X_{s+t}^{(n)},s+t)\,|\,X_s^{(n)}=x)$
   and $\widetilde{T}_tf(x,s)=\me(f(X_{s+t},s+t)\,|\,X_s=x)$
   for all $f\in\widehat{C}(S)$ and $(x,s)\in S$. By Lemma \ref{dense}
   the space of all maps of the form
   $(x,s)\mapsto\sum_{i=1}^{l} g_i(x)h_i(s)$ with $l\in\nz$,
   $g_i\in\widehat{C}(\rz)$ and $h_i\in\widehat{C}([0,\infty))$ is dense
   in $\widehat{C}(S)$. Hence it suffices to show (\ref{semigroupconv})
   for $f = gh$ with $g\in\widehat{C}(\rz)$ and $h\in\widehat{C}([0,\infty))$,
   in which case
   \[
   \widetilde{T}_t^{(n)}f(x,s)
   \ =\ h(s+t)\me(g(X_{s+t}^{(n)})\,|\,X_s^{(n)}=x)
   \ =\ h(s+t)\me\bigg(g\bigg(\frac{a_k}{a_n}X_t^{(k)}+xm(t)\bigg)\bigg),
   \quad (x,s)\in S,
   \]
   where $k:=k(n,s,x):=nm(s)+xa_n$, and
   \[
   \widetilde{T}_tf(x,s)
   \ =\ h(s+t)\me(g(X_{s+t}),|\,X_s=x)
   \ =\ h(s+t)\me(g(m(s)^{1/\alpha}X_t+xm(t))),
   \qquad (x,s)\in S.
   \]
   Let $\varepsilon>0$. Choose $C>0$ such that
   $\sup_{n\in\nz}\pr(|X_t^{(n)}|>C)<\varepsilon$.
   Splitting the mean along the event $A_k:=\{|X_t^{(k)}|\le C\}$ yields
   \begin{eqnarray*}
      &   & \hspace{-10mm}\sup_{(x,s)\in S_{n,T}}
	        |\widetilde{T}_t^{(n)}f(x,s)-\widetilde{T}_tf(x,s)|\\
      & = & \sup_{(x,s)\in S_{n,T}} h(s+t)
            \bigg|
            \me\bigg(g\bigg(\frac{a_k}{a_n}X_t^{(k)}+xm(t)\bigg)\bigg) -
            \me\big(g\big((m(s))^{1/\alpha}X_t + xm(t)\big)\big)
            \bigg|\\
      & \le & \|h\|\bigg(
            \sup_{(x,s)\in S_{n,T}}
	        \big|
               \me\big(g((m(s))^{1/\alpha}X_t^{(k)}+xm(t))\big)
	           -\me\big(g\big((m(s))^{1/\alpha}X_t+xm(t)\big)\big)
            \big|  \\
      &   & ~~+2\|g\|\varepsilon
            + \sup_{(x,s)\in S_{n,T}}
            \me\bigg(
               1_{A_k}\bigg|
                  g\bigg(\frac{a_k}{a_n}X_t^{(k)}+xm(t)\bigg)
                  - g\big((m(s))^{1/\alpha}X_t^{(k)} + xm(t)\big)
               \bigg|
            \bigg)
        \bigg).
   \end{eqnarray*}
   The second last supremum converges to $0$ as $n\to\infty$ by Lemma
   \ref{lemmauniformweakconv} and since $k\to\infty$ as $n\to\infty$
   by (\ref{eq_infconv}). The last supremum converges as well to $0$
   by (\ref{normalizingconstconv}) together with the uniform continuity
   of $g$. Since $\varepsilon >0$ can be chosen arbitrarily,
   (\ref{semigroupconv}) holds, which completes the proof.\hfill$\Box$
\end{proof}
\subsection{Proofs concerning Theorem \ref{main3}} \label{proofs3}
This section contains the proofs of Lemma \ref{lemmaminfty}, Lemma
\ref{betatlemma} and Theorem \ref{main3}.
\begin{proof} (of Lemma \ref{lemmaminfty})
   Fix $t\ge 0$. By Theorem 2 or Corollary 2.2 of Lamperti
   \cite{lamperti3}, applied with
   $x:=1-s$ to the function $x\mapsto 1-F(1-x,t)$, (\ref{eqgeneratingfunc})
   holds if and only if
   \begin{eqnarray}
      \lim_{s\to 1}\alpha(s,t)\ =\ \alpha(t),
	  \label{eqlampertiforF}
   \end{eqnarray}	
   where
   \[
   \alpha(s,t)
   \ :=\ \frac{(1-s)\frac{\partial}{\partial s} F(s,t)}{1-F(s,t)}
   \ =\ \frac{f(F(s,t))-F(s,t)}{1-F(s,t)}\frac{1-s}{f(s)-s}\\
   \ =\ \frac{L((1-F(s,t))^{-1})-1}{L((1-s)^{-1})-1}
   \]
   for all $s\in (0,1)$.
   Thus (i) and (ii) are equivalent. By Kolmogorov's backward equation,
   \begin{equation}
      at
      \ =\ \int_s^{F(s,t)}\frac{1}{f(u)-u}\,{\rm d}u
      \ =\ \int_{(1-F(s,t))^{-1}}^{(1-s)^{-1}}\frac{1}{x(L(x)-1)}\,{\rm d}x.
	   \label{eqkolmogorov}
   \end{equation}
   Also, note that
   \[
   \log\frac{1}{\alpha(s,t)}
   \ =\ \log (L((1-s)^{-1})-1)-\log(L((1-F(s,t))^{-1})-1)
   \ =\ \int_{(1-F(s,t))^{-1}}^{(1-s)^{-1}}\frac{L'(x)}{L(x)-1}\,{\rm d}x.
   \]	
   (iii) $\Rightarrow$ (ii):
   Applying integration by parts to (\ref{eqkolmogorov}) yields
   \begin{equation}
      at\ =\
      \frac{\log x}{L(x)-1}
      \bigg\vert_{x =(1-F(s,t))^{-1}}^{x=(1-s)^{-1}}
      + \int_{(1-F(s,t))^{-1}}^{(1-s)^{-1}}
      \frac{\log x}{L(x)-1}\frac{L'(x)}{L(x)-1}\,{\rm d}x.
      \label{eqproof_intbyparts}
   \end{equation}
   In the following we distinguish the two cases $A>0$ and $A=0$.
   Assume first that $L(x)/\log x\to A$ as $x\to\infty$ for some
   $A>0$ and let $t>0$. Let $\varepsilon>0$ be arbitrary. Then
   there exists $K>0$ such that $1-\varepsilon\le A\log x /(L(x)-1)\le 1+\varepsilon$
   for all $x\ge K$. But, if $s$ is sufficiently close to $1$, both
   inequalities hold on the interval where it is integrated above
   in (\ref{eqproof_intbyparts}), implying that
   $Aat=\lim_{s\to 1}\log(\alpha(s,t))^{-1}$, which is exactly
   (\ref{eqlampertiforF}).

   (i) $\Rightarrow$ (iii):
   Assume that (\ref{eqgeneratingfunc}) holds for all $t\ge 0$.
   By (\ref{alphat}),
   \[
   \alpha(t)\ =\ \lim_{s\to 1}\frac{\log(1-F(s,t))}{\log(1-s)}.
   \]
%
   As already seen before Lemma \ref{lemmaminfty} there exists
   $C\ge 0$ such that $\alpha(t)=e^{-Ct}$. Thus,
   \begin{equation}
      Ct
      \ =\ -\lim_{s\to 1}\log\frac{\log (1-F(s,t))}{\log(1-s)}
      \ =\ \lim_{s\to 1}
           \int_{(1-F(s,t))^{-1}}^{(1-s)^{-1}}\frac{1}{x\log x}\,{\rm d}x.
      \label{eqintegralone_local}
   \end{equation}
   Division of (\ref{eqintegralone_local}) by (\ref{eqkolmogorov}) leads to
   \begin{eqnarray*}
      A\ :=\ \frac{C}{a}\ =\ \lim_{s\to 1}
      \frac{\int_{(1-F(s,t))^{-1}}^{(1-s)^{-1}}\frac{1}{x\log x}\,{\rm d}x}
           {\int_{(1-F(s,t))^{-1}}^{(1-s)^{-1}}\frac{1}{x(L(x)-1)}\,{\rm d}x}.
   \end{eqnarray*}
   Now exploit the monotonicity of $\log x$ and $L(x)$ to conclude that
   \begin{eqnarray*}
      A
      & \le & \liminf_{s\to 1}\frac{\frac{1}{\log( (1-F(s,t))^{-1})}
              \int_{(1-F(s,t))^{-1}}^{(1-s)^{-1}}\frac{1}{x}\,{\rm d}x}
              {\frac{1}{L((1-s)^{-1})-1}
              \int_{(1-F(s,t))^{-1}}^{(1-s)^{-1}}\frac{1}{x}\,{\rm d}x}\\
      & = & \liminf_{s\to 1}\frac{L((1-s)^{-1})}{\log ((1-F(s,t))^{-1})}
      \ = \ \frac{1}{\alpha(t)}
            \liminf_{s\to 1}\frac{L((1-s)^{-1})}{\log((1-s)^{-1})}.
   \end{eqnarray*}
   Similarly, $A\ge\alpha(t)\limsup_{s\to 1} L((1-s)^{-1})/\log((1-s)^{-1})$.
   Letting $t\to 0$ yields $A=\lim_{s\to 1} L((1-s)^{-1})/\log((1-s)^{-1})$,
   which is (iii) and completes the proof.\hfill$\Box$
\end{proof}
\begin{proof} (of Lemma \ref{betatlemma})
   By assumption, $H(x):=L(x)-1-A\log x$, $x\ge 1$, satisfies
   $\lim_{x\to\infty}H(x)=B-1$. Moreover, $\beta(t)$, defined via
   (\ref{betatformula}), satisfies
   \begin{equation}
      \log\beta(t)
      \ =\ a\int_0^t(B-1-A\log\beta(s))\,{\rm d}s,\qquad t\ge 0.
      \label{eqintegralct}
   \end{equation}
   Computing the derivative of $L_t(x)$ with respect to $t$ provides
   a representation for $L_t(x)$ similar to (\ref{eqintegralct}), namely
   \begin{eqnarray*}
      &   & \hspace{-15mm}\frac{\partial}{\partial t}L_t(x)
      \ = \ \frac{\partial}{\partial t}
            \big(x^{\alpha(t)}(1-F(1-x^{-1},t))\big)\\
      & = & x^{\alpha(t)}\alpha'(t)\log x(1-F(1-x^{-1},t))
            - x^{\alpha(t)}a(f(F(1-x^{-1},t))-F(1-x^{-1},t))\\
      & = & ax^{\alpha(t)}(1-F(1-x^{-1},t))\bigg(
            \frac{1-f(F(1-x^{-1},t))}{1-F(1-x^{-1},t)}
            - \frac{1-F(1-x^{-1},t)}{1-F(1-x^{-1},t)}
            - A\alpha(t)\log x
            \bigg)\\
      & = & aL_t(x)\Big(
               L((1-F(1-x^{-1},t))^{-1}) -1 - A\log x^{\alpha(t)}
            \Big)\\
      & = & aL_t(x)\Big(
               L(x^{\alpha(t)}L_t^{-1}(x)) -1 - A\log x^{\alpha(t)}
            \Big)\\
      & = & aL_t(x)\Big(H(x^{\alpha(t)}L_t^{-1}(x))-A\log L_t(x)\Big),
            \qquad t\ge 0,x\ge 1.
   \end{eqnarray*}
   Therefore
   \begin{equation}
      \log L_t(x)
      \ = \ \int_0^t \frac{\frac{\partial}{\partial s}L_s(x)}{L_s(x)}\,{\rm d}s
      \ = \ a\int_0^t \big(
               H(x^{\alpha(s)}L_s^{-1}(x))-A\log L_s(x)
            \big)\,{\rm d}s,\qquad t\ge 0.
      \label{eqintegralLt}
   \end{equation}
   Let $t>0$ be fixed and $\varepsilon>0$ be arbitrary. If $1-x^{-1}>q$,
   then the map $s\to x^{\alpha(s)}L_s^{-1}(x)=(1-F(1-x^{-1},s))^{-1}$ is
   non-increasing. Hence $|H(x^{\alpha(s)}L_s^{-1}(x))-(B-1)|<\varepsilon$
   for all $s\in[0,t]$ and all sufficiently large $x$. From
   (\ref{eqintegralct}) and (\ref{eqintegralLt}) we obtain
   \[
   |\log L_t(x)-\log\beta(t)|\ \le\
   a\varepsilon t + aA\int_0^t |\log L_s(x)-\log\beta(s)|\,{\rm d}s.
   \]
   By Gronwall's inequality,
   \begin{eqnarray*}
      |\log L_t(x)-\log\beta(t)|
      & \le & a\varepsilon t + aA\int_0^t a\varepsilon s
                \exp\bigg(\int_s^taA\,{\rm d}\sigma\bigg)\,{\rm d}s\\
      & \le & a\varepsilon t\bigg(
              1 + \int_0^t aA \exp(aA(t-s))\,{\rm d}s \bigg)
	  \ =\ a\varepsilon t\exp(aAt).
   \end{eqnarray*}
   Since $\varepsilon>0$ can be chosen arbitrarily small, the result
   $\lim_{x\to\infty}L_t(x)=\beta(t)$ follows.\hfill$\Box$
\end{proof}
\begin{proof} (of Theorem \ref{main3})
   The proof is divided into two steps. First the assumption
   (\ref{betatdef}) is used to establish the convergence of the
   one-dimensional distributions. Afterwards it is shown with some general
   weak convergence machinery for Markov processes that the convergence
   of the one-dimensional distributions is already sufficient for
   convergence in $D_E[0,\infty)$, where $E:=[0,\infty)$.

   \vspace{2mm}

   {\bf Step 1.} (Convergence of the one-dimensional distributions)
   Fix $\lambda,t\ge 0$. Define $s_n:=\exp(-\lambda n^{-1/\alpha(t)})$,
   $n\in\nz$. Note that $s_n\to 1$ as $n\to\infty$. We have
   $\me(\exp(-\lambda X_t^{(n)}))
       =  \me(\exp(-\lambda n^{-1/\alpha(t)}Z_t^{(n)}))
       =  (\me(\exp(-\lambda n^{-1/\alpha(t)}Z_t)))^n
       =  (F(s_n,t))^n$.
   Taking the logarithm yields
   \[
   \log\me(\exp(-\lambda X_t^{(n)}))
   \ =\ n\log (1-(1-F(s_n,t)))
   \ \sim\ -n(1-F(s_n,t))
   \ \sim\ -n\beta(t)(1-s_n)^{\alpha(t)}
   \]
   as $n\to\infty$ by (\ref{betatdef}). Since
   $1-s_n=1-\exp(-\lambda n^{-1/\alpha(t)})
   \sim \lambda n^{-1/\alpha(t)}$ as $n\to\infty$ it follows that
   the latter expression is asymptotically equal to
   $-n\beta(t)(\lambda n^{-1/\alpha(t)})^{\alpha(t)}=-\beta(t)\lambda^{\alpha(t)}$.
   Therefore $\lim_{n\to\infty}\me(\exp(-\lambda X_t^{(n)}))
   =\exp(-\beta(t)\lambda^{\alpha(t)})=\me(\exp(-\lambda X_t))$.
   This pointwise convergence of the Laplace
   transforms implies the convergence $X_t^{(n)}\to X_t$ in distribution
   as $n\to\infty$.

   \vspace{2mm}

   {\bf Step 2.} (Convergence in $D_E[0,\infty)$)
   We proceed as in the proof of \cite[Theorem 2.1]{kuklamoehle}.
   For $n\in\nz$ and $t\ge 0$ define $E_{n,t}:=\{j/n^{1/\alpha(t)}\,:\,j\in\nz_0\}$.
   In general the process $X^{(n)}$ is time-inhomogeneous. Let
   $Y^{(n)}:=(X_t^{(n)},t)_{t\ge 0}$ and $Y:=(X_t,t)_{t\ge 0}$ denote
   the space-time processes of $X^{(n)}$ and $X$
   respectively. Note that $Y^{(n)}$ has state space
   $S_n:=\{(j/n^{1/\alpha(t)},t)\,:\,j\in\nz_0,t\ge 0\}
   =\bigcup_{t\ge 0}(E_{n,t}\times\{t\})$ and $Y$ has state space
   $S:=[0,\infty)^2$. According to Revuz and Yor
   \cite[p.~85, Exercise (1.10)]{revuzyor} the process $Y^{(n)}$ is
   time-homogeneous. Define $\pi_n:B(S)\to B(S_n)$ via
   $\pi_nf(x,s):=f(x,s)$ for $f\in B(S)$ and $(x,s)\in S_n$.
   In the following it is shown that $Y^{(n)}$ converges in $D_S[0,\infty)$
   to $Y$ as $n\to\infty$. Note that this convergence implies the desired
   convergence of $X^{(n)}$ in $D_E[0,\infty)$ to $X$ as $n\to\infty$.
   For $\lambda,\mu>0$ define the test function $f_{\lambda,\mu}$
   via $f_{\lambda,\mu}(x,s):=e^{-\lambda x-\mu s}$, $(x,s)\in S$.
   By \cite[Proposition 5.4]{kuklamoehle} it suffices to verify that
   for every $t\ge 0$ and $\lambda,\mu>0$,
   \begin{equation} \label{conv}
      \lim_{n\to\infty}\sup_{s\ge 0}\sup_{x\in E_{n,s}}
      |U_t^{(n)}\pi_nf_{\lambda,\mu}(x,s)-\pi_nU_tf_{\lambda,\mu}(x,s)|
      \ =\ 0,
   \end{equation}
   where $U_t^{(n)}:B(S_n)\to B(S_n)$ is defined via
   $U_t^{(n)}f(x,s):=\me(f(X_{s+t}^{(n)},s+t)\,|\,X_s^{(n)}=x)$,
   $f\in B(S_n)$, $s\ge 0$, $x\in E_{n,s}$. Note that
   $(U_t^{(n)})_{t\ge 0}$ is the semigroup of $Y^{(n)}$.

   Fix $t\ge 0$ and $\lambda,\mu>0$. For all $n\in\nz$, $s\ge 0$ and
   $x\in E_{n,s}$,
   \begin{eqnarray*}
      U_t^{(n)}\pi_nf_{\lambda,\mu}(x,s)
      & = & \me(\pi_nf_{\lambda,\mu}(X_{s+t}^{(n)},s+t)\,|\,X_s^{(n)}=x)\\
      & = & \me(\exp(-\lambda X_{s+t}^{(n)}-\mu(s+t))\,|\,X_s^{(n)}=x)\\
      & = & e^{-\mu(s+t)}\me(\exp(-\lambda n^{-1/\alpha(s+t)}
            Z_{s+t}^{(n)})\,|\,Z_s^{(n)}=xn^{1/\alpha(s)})\\
      & = & e^{-\mu(s+t)}\me(\exp(-\lambda n^{-1/\alpha(s+t)}Z_t^{(xn^{1/\alpha(s)})}))
   \end{eqnarray*}
   and
   \begin{eqnarray*}
      \pi_nU_tf_{\lambda,\mu}(x,s)
      & = & U_tf_{\lambda,\mu}(x,s)
      \ = \ \me(\exp(-\lambda X_{s+t}-\mu(s+t))\,|\,X_s=x)\\
      & = & e^{-\mu(s+t)}\me(\exp(-\lambda X_{s+t})\,|\,X_s=x)
      \ = \ e^{-\mu(s+t)}\me(\exp(-\lambda x^{1/\alpha(t)}X_t)).
   \end{eqnarray*}
   Thus, one has to verify that
   \[
   \lim_{n\to\infty}\sup_{s\ge 0}\sup_{x\in E_{n,s}}
   e^{-\mu(s+t)}
   |\me(\exp(-\lambda n^{-1/\alpha(s+t)}Z_t^{(xn^{1/\alpha(s)})}))
   - \me(\exp(-\lambda x^{1/\alpha(t)}X_t))|
   \ =\ 0.
   \]
   We will even verify that
   \[
   \lim_{n\to\infty}\sup_{s\ge 0}\sup_{x>0}
   |\me(\exp(-\lambda n^{-1/\alpha(s+t)}Z_t^{(\lfloor xn^{1/\alpha(s)}\rfloor)}))
   - \me(\exp(-\lambda x^{1/\alpha(t)}X_t))|
   \ =\ 0.
   \]
   Since $\alpha(s+t)=\alpha(s)\alpha(t)$,
   the quantity inside the absolute values depends
   on $n$ and $s$ only via $n^{1/\alpha(s)}$. Since
   $n^{1/\alpha(s)}$ is non-decreasing in $s$ it follows that the convergence
   for fixed $s\ge 0$ is slower as $s$ is smaller. So the slowest convergence
   holds for $s=0$ ($\Rightarrow\alpha(s)=1$). Thus it suffices to verify that
   for every $t\ge 0$ and $\lambda>0$
   \[
   \lim_{n\to\infty}\sup_{x>0}
   |\me(\exp(-\lambda n^{-1/\alpha(t)}Z_t^{(\lfloor xn\rfloor)}))
   - \me(\exp(-\lambda x^{1/\alpha(t)}X_t))|
   \ =\ 0.
   \]
   The map $x\mapsto\me(\exp(-\lambda x^{1/\alpha(t)}X_t))$ is bounded,
   continuous and non-increasing. Since $Z_t^{(1)}\le Z_t^{(2)}\le\cdots$
   almost surely it follows by P\'olya's theorem \cite[Satz I]{polya} that
   it suffices to verify the above convergence pointwise for every $x>0$.
   Defining $k:=\lfloor xn\rfloor$ it is readily seen that this is
   equivalent to the convergence of the one-dimensional distributions
   $X_t^{(k)}=k^{-1/\alpha(t)}Z_t^{(k)}\to X_t$ in distribution as
   $k\to\infty$, $t\ge 0$. But the convergence of the one-dimensional
   distributions holds by Step 1. The proof is complete.\hfill$\Box$
\end{proof}

\subsection{Appendix} \label{appendix}
In this appendix four auxiliary results are provided. Lemma
\ref{lemmaconvpgf} and Lemma \ref{lemmalhospital} below are
used in the proof of Lemma \ref{lemmaoffspringdist}. Lemma
\ref{lemmaconvpgf} provides an asymptotic statement for Laplace
transforms and generating functions respectively. Lemma
\ref{lemmalhospital} is a version of L'Hospital's rule, which
is stated for completeness.
\begin{lemma} \label{lemmaconvpgf}
   Let $\xi$ be a nonnegative real valued random variable with
   $m:=\me(\xi)<\infty$. Suppose that the distribution function
   $F$ of $\xi$ satisfies $1-F(x)\le Cx^{-\alpha}$ for all
   $x\ge 0$ for some $C<\infty$ and $\alpha>1$. Then, for every
   $\varepsilon\in [0,\min(\alpha-1,1))$,
   \begin{equation} \label{convlaplace}
      \lim_{\lambda\to 0}
      \frac{1-\varphi(\lambda)+\lambda m}{\lambda^{1+\varepsilon}}
      \ =\ 0,
   \end{equation}
   where $\varphi$ denotes the Laplace transform of $\xi$. If $\xi$
   takes only values in $\nz_0$, then, for the same range of values
   of $\varepsilon$ as above,
   \begin{equation} \label{convpgf}
      \lim_{s\to 1}\frac{(1-s)m-(1-f(s))}{(1-s)^{1+\varepsilon}}\ =\ 0,
   \end{equation}
   where $f$ denotes the pgf of $\xi$.
\end{lemma}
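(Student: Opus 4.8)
The plan is to prove the Laplace-transform statement (\ref{convlaplace}) first and to deduce the generating-function statement (\ref{convpgf}) from it via the substitution $s=e^{-\lambda}$. The key is to write the relevant nonnegative quantity $\lambda m-(1-\varphi(\lambda))$ as an expectation: since $\lambda m=\me(\lambda\xi)$ and $\varphi(\lambda)=\me(e^{-\lambda\xi})$,
\[
\lambda m-(1-\varphi(\lambda))\ =\ \me\big(e^{-\lambda\xi}-1+\lambda\xi\big)\ =\ \me\big(g(\lambda\xi)\big),\qquad g(u):=e^{-u}-1+u.
\]
The function $g$ is nonnegative on $[0,\infty)$ by convexity of the exponential, and I would use only the two elementary bounds $g(u)\le u^2/2$ (from $e^{-u}\le 1-u+u^2/2$) and $g(u)\le u$ (from $e^{-u}\le 1$). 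The first is sharp for small $u$, the second for large $u$, and switching between them near $u\approx 1$ is exactly what lets the tail hypothesis enter.

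The core step is a truncation of $\me(g(\lambda\xi))$ at the level $\xi=1/\lambda$. On $\{\xi\le 1/\lambda\}$ I bound $g(\lambda\xi)\le(\lambda\xi)^2/2$ and on $\{\xi>1/\lambda\}$ I bound $g(\lambda\xi)\le\lambda\xi$, giving
\[
\me\big(g(\lambda\xi)\big)\ \le\ \frac{\lambda^2}{2}\,\me\big(\xi^2 1_{\{\xi\le 1/\lambda\}}\big)+\lambda\,\me\big(\xi 1_{\{\xi>1/\lambda\}}\big).
\]
Both truncated moments are estimated by integrating the tail $\overline F(x):=1-F(x)\le Cx^{-\alpha}$ by parts: one has $\me(\xi 1_{\{\xi>K\}})=K\overline F(K)+\int_K^\infty\overline F(x)\,{\rm d}x$ (the boundary term vanishes because $\alpha>1$) and $\me(\xi^2 1_{\{\xi\le K\}})\le\int_0^K 2x\overline F(x)\,{\rm d}x$ (the boundary term is negative and is discarded). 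With $\overline F(x)\le Cx^{-\alpha}$ one gets $\me(\xi 1_{\{\xi>1/\lambda\}})=O(\lambda^{\alpha-1})$, so the second contribution is $O(\lambda^{\alpha})$; and $\int_0^K 2x\overline F(x)\,{\rm d}x$ is $O(K^{2-\alpha})$ for $\alpha<2$, $O(\log K)$ for $\alpha=2$ and $O(1)$ for $\alpha>2$, so the first contribution is $O(\lambda^{\alpha})$, $O(\lambda^2\log(1/\lambda))$ and $O(\lambda^2)$ respectively. Altogether $\me(g(\lambda\xi))=O(\lambda^{\alpha\wedge 2})$, up to a logarithmic factor in the boundary case $\alpha=2$.

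Dividing by $\lambda^{1+\varepsilon}$ then gives a bound of order $\lambda^{(\alpha\wedge 2)-1-\varepsilon}$ (times $\log(1/\lambda)$ when $\alpha=2$), which tends to $0$ exactly when $\varepsilon<(\alpha\wedge 2)-1=\min(\alpha-1,1)$; the logarithm at $\alpha=2$ is harmless since it is beaten by the surviving power $\lambda^{1-\varepsilon}$. This reproduces the range in the statement, and I expect the trichotomy $\alpha<2$ / $\alpha=2$ / $\alpha>2$ in the truncated second moment to be the main bookkeeping obstacle: it is precisely this case split that produces the threshold $\min(\alpha-1,1)$ and explains why the constraint $\varepsilon<1$ is present at all, namely it becomes active only for $\alpha\ge 2$, where the rate saturates at $\lambda^2$ because $\xi$ may already possess a finite second moment.

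For the integer-valued case (\ref{convpgf}) I would reduce to (\ref{convlaplace}). Since $f(e^{-\lambda})=\me(e^{-\lambda\xi})=\varphi(\lambda)$ when $\xi\in\nz_0$, the substitution $s=e^{-\lambda}$ (with $s\to 1$ as $\lambda\to 0$) turns the relevant quantity into $(1-e^{-\lambda})m-(1-\varphi(\lambda))$, which I would split as $[\lambda m-(1-\varphi(\lambda))]-m[\lambda-(1-e^{-\lambda})]$. The first bracket is governed by (\ref{convlaplace}) and the second equals $m\,g(\lambda)=O(\lambda^2)$; since $(1-e^{-\lambda})^{1+\varepsilon}\sim\lambda^{1+\varepsilon}$, dividing sends the first piece to $0$ by (\ref{convlaplace}) and the second to $O(\lambda^{1-\varepsilon})\to 0$. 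Alternatively one can repeat the truncation argument verbatim with $g(\lambda\xi)$ replaced by $\phi_\xi(s):=s^\xi-1+(1-s)\xi\ge 0$ (nonnegativity by convexity of $s\mapsto s^k$), using the discrete analogues $\phi_k(s)\le\frac{1}{2}k(k-1)(1-s)^2$ and $\phi_k(s)\le k(1-s)$ and splitting at $\xi=1/(1-s)$.
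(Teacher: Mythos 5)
Your proof is correct. You have (rightly) read the numerator of (\ref{convlaplace}) as $\lambda m-(1-\varphi(\lambda))=\me(e^{-\lambda\xi}-1+\lambda\xi)\ge 0$, which is also what the paper's own proof works with; the sign as printed in the statement appears to be a typo, since $1-\varphi(\lambda)+\lambda m\sim 2\lambda m$ could not be $o(\lambda^{1+\varepsilon})$. Your route differs from the paper's in how this expectation is estimated. The paper uses the exact representation $\me(g(\xi))=g(0)+\int_0^\infty g'(x)(1-F(x))\,{\rm d}x$ with $g(x)=e^{-\lambda x}-1+\lambda x$, so that after dividing by $\lambda^{1+\varepsilon}$ the quantity becomes $\lambda^{-\varepsilon}\int_0^\infty(1-F(x))(1-e^{-\lambda x})\,{\rm d}x$; it then splits the integral at the \emph{fixed} point $x=1$, bounds $1-F$ by $1$ on $[0,1]$ and by $Cx^{-\alpha}$ on $[1,\infty)$, and applies dominated convergence to each piece. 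This avoids any case distinction in $\alpha$ and gives the limit $0$ directly, but yields no rate. You instead bound $g(u)=e^{-u}-1+u$ pointwise by $\min(u^2/2,\,u)$, truncate at the $\lambda$-dependent level $\xi=1/\lambda$, and evaluate the truncated moments from the tail bound; this forces the trichotomy $\alpha<2$, $\alpha=2$, $\alpha>2$, but in exchange produces the explicit rate $O(\lambda^{\alpha\wedge 2})$ (with a logarithmic correction at $\alpha=2$), which makes transparent why the threshold is $\min(\alpha-1,1)$ and why the constraint $\varepsilon<1$ is present. The reduction of (\ref{convpgf}) to (\ref{convlaplace}) via $s=e^{-\lambda}$ is essentially the same in both arguments, and the individual estimates you invoke (the inequalities $e^{-u}\le 1-u+u^2/2$ and $e^{-u}\le 1$, the tail-integration identities for the truncated first and second moments, and the fact that the logarithm at $\alpha=2$ is beaten by the surviving power $\lambda^{1-\varepsilon}$) are all correct.
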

\begin{remark}
   The tail condition holds if $\me(\xi^\alpha)<\infty$, since, by
   Markov's inequality, $1-F(x)=\pr(\xi^{\alpha}>x^\alpha)
   \le x^{-\alpha}\me(\xi^\alpha)$.
\end{remark}
\begin{proof} (of Lemma \ref{lemmaconvpgf})
   Applying the well known formula $\me(g(\xi))=g(0)+\int_0^\infty
   g'(x)(1-F(x))\,{\rm d}x$, $g\in C^1([0,\infty))$, to the function
   $g(x):=e^{-\lambda x}-1+\lambda x$ yields
   \[
   \frac{\varphi(\lambda)-1+\lambda m}{\lambda^{1+\varepsilon}}
   \ =\ \frac{1}{\lambda^{\varepsilon}}\int_0^\infty
        (1-F(x))(1-e^{-\lambda x})\,{\rm d}x
   \ \le\ \int_0^1\frac{1-e^{-\lambda x}}{\lambda^\varepsilon}\,{\rm d}x
          + C\int_1^\infty\frac{1-e^{-\lambda x}}{(\lambda x)^\varepsilon
          x^{\alpha-\varepsilon}}\,{\rm d}x.
   \]
   Since $\varepsilon<1$,
   $\lim_{\lambda\to 0}(1-e^{-\lambda x})/\lambda^\varepsilon=0$ and the
   first integral converges to $0$ by the dominated convergence theorem. Since
   $(1-e^{-\lambda x})/(\lambda x)^\varepsilon$ is bounded uniformly in
   $\lambda$ and $x$, and $\alpha-\varepsilon>1$, the dominated convergence
   theorem is again applicable and the second integral converges to $0$.
   If $\xi$ takes only values in $\nz_0$ then (\ref{convpgf}) follows from
   (\ref{convlaplace}) via the substitution $\lambda:=-\log s$, $s\in(0,1)$,
   and the fact that $-\log s=(1-s)+O((1-s)^2)$ as $s\to 1$.\hfill$\Box$
\end{proof}
The situation in the following lemma is the one of L'Hospital's rule.
\begin{lemma}
   Let $c,x_0\in [-\infty,\infty]$.	Let $f,g:I\to\rz$ be continuously
   differentiable on an open interval $I$ containing $x_0$ or having $x_0$
   as a limit point if the limit is one-sided. Assume further that
   $g'(x)\ne 0$ for all $x\in I\setminus\{x_0\}$. Let $\alpha\in\rz
   \setminus\{1\}$. If either
   \begin{eqnarray*}
      \lim_{x\to x_0} g^{1-1/\alpha}(x)\ =\
      \lim_{x\to x_0} f(x)/g^{1/\alpha}(x)\ =\ 0
   \end{eqnarray*}
   or
   \begin{eqnarray*}
      \lim_{x\to x_0} g^{1-1/\alpha}(x)\ =\
      \lim_{x\to x_0} f(x)/g^{1/\alpha}(x)\ =\ \infty,
   \end{eqnarray*}
   and
   \begin{eqnarray}
      \lim_{x\to x_0} (\alpha f'(x)/g'(x)-f(x)/g(x))\ =\ c,
      \label{local_2}
   \end{eqnarray}
   then $\lim_{x\to x_0}f(x)/g(x)=c(\alpha-1)^{-1}$. If the limit
   (\ref{local_2}) does not exist it still holds that
   \begin{eqnarray*}
      \liminf_{x\to x_0}
      \bigg(\alpha\frac{f'(x)}{g'(x)}-\frac{f(x)}{g(x)}\bigg)
      & \le & \liminf_{x\to x_0} (\alpha-1)\frac{f(x)}{g(x)} \\
      & \le & \limsup_{x\to x_0} (\alpha-1)\frac{f(x)}{g(x)}
      \ \le \ \limsup_{x\to x_0}
              \bigg(\alpha\frac{f'(x)}{g'(x)}-\frac{f(x)}{g(x)}\bigg).
   \end{eqnarray*}
   \label{lemmalhospital}
\end{lemma}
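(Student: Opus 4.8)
The plan is to reduce the statement to the classical L'Hospital rule by a change of variables that turns both the nonstandard quotient $f/g$ and the combination $\alpha f'/g'-f/g$ into an ordinary derivative quotient. The hypotheses presuppose that the fractional powers of $g$ are real-valued, so $g>0$ near $x_0$; since $g'\neq 0$ on $I\setminus\{x_0\}$, the function $g$ is strictly monotone there and stays bounded away from $0$ after shrinking $I$. I would then set $u:=f\,g^{-1/\alpha}$ and $v:=g^{(\alpha-1)/\alpha}$, which are $C^1$ on $I\setminus\{x_0\}$. The point of this choice is the identity $u/v=f\,g^{-1/\alpha-(\alpha-1)/\alpha}=f/g$, together with the observation that the two displayed hypotheses say exactly $\lim_{x\to x_0}v=\lim_{x\to x_0}u=0$ in the first case and $=\infty$ in the second. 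Hence $u/v$ is of the indeterminate form $0/0$ or $\infty/\infty$, the setting in which L'Hospital's rule operates.

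Next I would compute the derivative quotient. From $v'=\tfrac{\alpha-1}{\alpha}g^{-1/\alpha}g'$ and $u'=g^{-1/\alpha}\bigl(f'-\tfrac1\alpha f g'/g\bigr)$ the common factor $g^{-1/\alpha}$ cancels, and a short simplification gives
\[
\frac{u'(x)}{v'(x)}\ =\ \frac{1}{\alpha-1}\Big(\alpha\frac{f'(x)}{g'(x)}-\frac{f(x)}{g(x)}\Big),\qquad x\in I\setminus\{x_0\}.
\]
Moreover $v'$ never vanishes on $I\setminus\{x_0\}$, because $\alpha\neq 1$, $g\neq 0$ and $g'\neq 0$ there, which is precisely the regularity L'Hospital requires. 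Thus assumption (\ref{local_2}) reads $\lim_{x\to x_0}u'/v'=c/(\alpha-1)$, and the classical L'Hospital rule yields $\lim_{x\to x_0}f/g=\lim_{x\to x_0}u/v=c/(\alpha-1)$, which is the first assertion.

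For the second assertion I would invoke the sharper two-sided form of L'Hospital, valid for a quotient in $0/0$ or $\infty/\infty$ form with $v'$ of constant sign, namely
\[
\liminf_{x\to x_0}\frac{u'}{v'}\ \le\ \liminf_{x\to x_0}\frac{u}{v}\ \le\ \limsup_{x\to x_0}\frac{u}{v}\ \le\ \limsup_{x\to x_0}\frac{u'}{v'}.
\]
This chain follows from the Cauchy mean value theorem: fixing $y$ near $x_0$ and using $\frac{u(x)-u(y)}{v(x)-v(y)}=\frac{u'(\theta)}{v'(\theta)}$ sandwiches $u/v$ between bounds for $u'/v'$ upon letting $x\to x_0$ and then $y\to x_0$, the $\infty/\infty$ case being handled by dividing through by $v(x)$ and absorbing the error terms $v(y)/v(x)\to 0$ and $u(y)/v(x)\to 0$. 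Translating back via $u/v=f/g$ and $(\alpha-1)\,u'/v'=\alpha f'/g'-f/g$ then recovers the claimed inequalities, but only after multiplying through by $\alpha-1$, and this is the step that needs care: for $\alpha>1$ the factor is positive and the four inequalities are preserved verbatim, whereas for $\alpha<1$ the factor is negative and interchanges $\liminf$ with $\limsup$. A direct check shows the two interchanges compensate so that exactly the stated chain emerges in both regimes. This sign bookkeeping is the only genuinely delicate point; everything else is the clean reduction to a standard result.
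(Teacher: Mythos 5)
Your proposal is correct and follows essentially the same route as the paper: the substitution $u=f\,g^{-1/\alpha}$, $v=g^{1-1/\alpha}$, the identity $u'/v'=(\alpha-1)^{-1}(\alpha f'/g'-f/g)$ with $u/v=f/g$, and an appeal to the classical L'Hospital rule. Your additional care with the two-sided (liminf/limsup) form and the sign of $\alpha-1$ only makes explicit what the paper leaves implicit.
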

\begin{proof}
   A straightforward computation shows that
   \begin{eqnarray*}
      \frac{1}{\alpha-1}\bigg(\alpha\frac{f'(x)}{g'(x)} - \frac{f(x)}{g(x)}\bigg)
      \ =\ \frac{f'(x)g^{-1/\alpha}(x)-(1/\alpha)
           g^{-1-1/\alpha}(x)g'(x)f(x)}{(1-1/\alpha)g^{-1/\alpha}g'(x)},
   \end{eqnarray*}
   where the numerator and the denominator are the derivatives of
   $f(x)g^{-1/\alpha}(x)$ and $g^{1-1/\alpha}(x)$ respectively.
   Thus the convergence of the left hand side to $c(\alpha-1)^{-1}\in
   [-\infty,\infty]$ implies $\lim_{x\to x_0}f(x)/g(x)=\lim_{x\to x_0}
   (f(x)/g^{1/\alpha}(x))/g^{1-1/\alpha}(x)=c(\alpha-1)^{-1}$.\hfill$\Box$
\end{proof}
The following two results are needed in the proof of Theorem \ref{main2}.
Lemma \ref{lemmauniformweakconv} contains a statement on uniform weak
convergence. The last result (Lemma \ref{dense}) provides a certain
dense subset of $\widehat{C}(\rz\times [0,\infty))$.
\begin{lemma}
   Let $(X_n)_{n\in\nz}$ be a sequence of real-valued random variables
   converging weakly to a real-valued random variable $X$. Then, for
   every bounded and continuous function $f:\rz\to\rz$ and $A,B>0$,
   \begin{eqnarray}
      \lim_{n\to\infty}
      \sup_{|a|\le A,|b|\le B}|\me(f(aX_n+b))-\me(f(aX+b))|\ =\ 0.
      \label{local}
   \end{eqnarray}
   If $f\in\widehat{C}(\rz)$, then (\ref{local}) even holds if the
   supremum is taken over $[-A,A]\times\rz$ instead of $[-A,A]\times[-B,B]$.
   \label{lemmauniformweakconv}
\end{lemma}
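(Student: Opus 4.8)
The plan is to treat the quantities $\Phi_n(a,b):=\me(f(aX_n+b))$ and $\Phi(a,b):=\me(f(aX+b))$ as functions of the parameter pair $(a,b)$ and to upgrade the obvious pointwise convergence $\Phi_n\to\Phi$ to uniform convergence on the compact rectangle $K:=[-A,A]\times[-B,B]$. For each fixed $(a,b)$ the map $x\mapsto f(ax+b)$ is bounded and continuous, so the weak convergence $X_n\Rightarrow X$ gives $\Phi_n(a,b)\to\Phi(a,b)$. The standard device for passing from pointwise to uniform convergence on a compact set is to establish equicontinuity of the family $\{\Phi_n:n\in\nz\}\cup\{\Phi\}$ and then run an $\varepsilon/3$ argument over a finite net. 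The only genuine input needed is that a weakly convergent sequence on $\rz$ is tight: choosing continuity points $\pm R$ of the distribution function of $X$ with small tails and using $F_{X_n}\to F_X$ at these points, one obtains $R=R(\varepsilon)$ with $\sup_n\pr(|X_n|>R)<\varepsilon$ (after enlarging $R$ to absorb the finitely many initial indices).

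For the equicontinuity I would fix $\varepsilon>0$ and the associated $R$. On the event $\{|X_n|\le R\}$ the argument $aX_n+b$ stays in the compact interval $[-M,M]$ with $M:=AR+B$, where $f$ is uniformly continuous; hence there is $\delta>0$ such that $|a-a'|R+|b-b'|<\delta$ forces $|f(aX_n+b)-f(a'X_n+b')|<\varepsilon$ on this event. On the complementary event, of probability below $\varepsilon$, the same difference is at most $2\|f\|$. Taking expectations yields $|\Phi_n(a,b)-\Phi_n(a',b')|\le\varepsilon+2\|f\|\varepsilon$ whenever $(a',b')$ lies within $\delta/(R+1)$ of $(a,b)$, uniformly in $n$, and the identical bound holds for $\Phi$. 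Covering $K$ by finitely many balls of this radius, centred at points where $\Phi_n\to\Phi$ pointwise, and combining equicontinuity at the centres with pointwise convergence there, gives $\sup_{(a,b)\in K}|\Phi_n(a,b)-\Phi(a,b)|\to 0$, which is the first claim.

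For the second claim I would assume $f\in\widehat{C}(\rz)$, so that $f$ is bounded, uniformly continuous, and vanishes at infinity, and note that the extension from $[-A,A]\times[-B,B]$ to $[-A,A]\times\rz$ only requires controlling large $|b|$. Given $\varepsilon>0$, pick $M_0$ with $|f(y)|<\varepsilon$ for $|y|>M_0$ and $R$ as above with $\sup_n\pr(|X_n|>R)<\varepsilon$; then for $|b|>B_0:=M_0+AR$ and $|a|\le A$ the bound $|aX_n+b|\ge|b|-AR>M_0$ holds on $\{|X_n|\le R\}$, so $|\Phi_n(a,b)|\le\varepsilon+\|f\|\varepsilon$, and likewise $|\Phi(a,b)|\le\varepsilon+\|f\|\varepsilon$. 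Hence the supremum over $|a|\le A$, $|b|>B_0$ of $|\Phi_n-\Phi|$ is at most $2(1+\|f\|)\varepsilon$ for every $n$, while the supremum over the remaining compact region $[-A,A]\times[-B_0,B_0]$ tends to $0$ by the first part; letting $\varepsilon\to 0$ finishes the proof.

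The step I expect to be the crux is the equicontinuity estimate: since $f$ is assumed only continuous (not uniformly so) in the first part, one cannot bound $f(aX_n+b)-f(a'X_n+b')$ directly, and it is precisely the tightness extracted from weak convergence that confines the relevant arguments to a compact interval on which $f$ is uniformly continuous. Everything else is the routine combination of a finite net with the $\varepsilon/3$ trick.
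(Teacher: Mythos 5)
Your proposal is correct and follows essentially the same route as the paper: pointwise convergence from weak convergence, upgraded to uniform convergence on the compact rectangle via equicontinuity of the family $\{\Phi_n\}$, with tightness confining the argument of $f$ to a compact interval where $f$ is uniformly continuous, and the extension to unbounded $b$ handled by showing all the $\Phi_n$ and $\Phi$ are uniformly small for $|b|$ large when $f$ vanishes at infinity. The only cosmetic differences are that you spell out the finite-net $\varepsilon/3$ argument where the paper invokes Arzel\`a--Ascoli, and you derive tightness directly from the distribution functions where the paper cites Prohorov's theorem.
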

\begin{proof}
   For $n\in\nz$ define $g_n:\rz^2\to\rz$ via $g_n(a,b):=\me(f(aX_n+b))$,
   $a,b\in\rz$, and $g$ similarly with $X_n$ replaced by $X$. Fix $A,B>0$.
   Obtaining pointwise convergence of $g_n$ to $g$ from weak convergence,
   (\ref{local}) follows, in view of the Arzel\`a--Ascoli theorem, from
   the uniform equicontinuity of $\{g_n:n\in \nz\}$ on
   $K:=[-A,A]\times [-B,B]$, that is, for every $\varepsilon>0$ there
   exists $\delta>0$ such that $\max(|a-a'|,|b-b'|)<\delta$ implies
   $|g_n(a,b)-g_n(a',b')|<\varepsilon$ for all $n\in\nz$ and all
   $(a,b),(a',b')\in K$.

   Let $\varepsilon>0$. By Prohorov's theorem the family of distributions
   of the weakly convergent sequence $(X_n)_{n\in\nz}$ is tight. Thus, there
   exists $C\in (0,\infty)$ such that $\sup_{n\in\nz}\pr(|X_n|>C)<\varepsilon$
   and $\pr(|X|>C)<\varepsilon$. Using the uniform continuity of $f$ on $K$,
   choose $\delta>0$ such that $|x-y|<\delta(C+1)$ implies
   $|f(x)-f(y)|<\varepsilon$. Consequently,
   \begin{eqnarray*}
      |g_n(a,b)-g_n(a',b')|
      & = & |\me(f(aX_n+b))-\me(f(a'X_n+b'))|\\
      & \le & 2\varepsilon \|f\|
               + \me(1_{\{|X_n|\le C\}}|f(aX_n+b)-f(a'X_n+b')|)\\
      & \le & 2\varepsilon\|f\|+\varepsilon
   \end{eqnarray*}
   for $(a,b),(a',b')\in K$ with $\max(|a-a'|,|b-b'|)<\delta$, proving
   the first statement.

   If $f\in\widehat{C}(\rz)$ then there exists $L>0$ such that
   $|f(x)|<\varepsilon$ for all $|x|>L$. In particular (\ref{local}) holds
   for $B:=AC+L$. On the remaining area $[-A,A] \times(\rz\setminus[-B,B])$
   all the functions $g_n$ and $g$ are sufficiently small. More precisely,
   if $|a|\le A$ and $|b|>B$, then $|aX_n+b|>L$ on the event $\{|X_n|\le C\}$,
   hence
   \[
	|g_n(a,b)|
	\ =\ |\me(f(aX_n+b))|
	\ \le\ \varepsilon\|f\|+\me(1_{\{|X_n|\le C\}}|f(aX_n+b)|)
    \ \le\ \varepsilon \|f\|+\varepsilon
   \]
   for all $n\in\nz$, and similarly $|g(a,b)|\le\varepsilon\|f\|+\varepsilon$,
   which proves the additional statement.\hfill$\Box$
\end{proof}
\begin{lemma} \label{dense}
   Let $S:=\rz\times [0,\infty)$. The space of functions
   $f:S\to\rz$ of the form $f(x,y)=\sum_{i=1}^l g_i(x)h_i(y)$
   with $l\in\nz$, $g_1,\ldots,g_l\in\widehat{C}(\rz)$ and
   $h_1,\ldots,h_l\in\widehat{C}([0,\infty))$ is dense in
   $\widehat{C}(S)$.
\end{lemma}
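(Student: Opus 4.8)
The plan is to recognize the set $\mathcal{A}$ of finite sums $\sum_{i=1}^l g_i(x)h_i(y)$ as the algebraic tensor product $\widehat{C}(\rz)\otimes\widehat{C}([0,\infty))$ sitting inside $\widehat{C}(S)$, and then to invoke the Stone--Weierstrass theorem in its version for the space $C_0(X)=\widehat{C}(X)$ of a locally compact Hausdorff space $X$. Since $S=\rz\times[0,\infty)$ is locally compact and Hausdorff, that version asserts that a subalgebra of $\widehat{C}(S)$ which separates the points of $S$ and vanishes at no point of $S$ is automatically dense in the supremum norm. No complex-conjugation hypothesis enters, as all functions here are real-valued. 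Thus the whole proof reduces to checking three structural properties of $\mathcal{A}$.

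First I would verify that $\mathcal{A}\subseteq\widehat{C}(S)$ and that $\mathcal{A}$ is a subalgebra. Each summand $g_i(x)h_i(y)$ is continuous; to see that it vanishes at infinity, fix $\varepsilon>0$ and choose $R>0$ so large that $|g_i(x)|<\varepsilon/(1+\|h_i\|)$ for $|x|>R$ and $|h_i(y)|<\varepsilon/(1+\|g_i\|)$ for $y>R$. Outside the compact box $[-R,R]\times[0,R]$ at least one of these estimates applies, and the other factor is bounded, so $|g_i(x)h_i(y)|<\varepsilon$ there; hence $g_ih_i\in\widehat{C}(S)$ and so is every finite sum. Closure of $\mathcal{A}$ under addition and scalar multiplication is immediate, while closure under multiplication follows by expanding $\big(\sum_i g_ih_i\big)\big(\sum_j\tilde g_j\tilde h_j\big)=\sum_{i,j}(g_i\tilde g_j)(h_i\tilde h_j)$ and using that the pointwise product of two members of $\widehat{C}(\rz)$ again lies in $\widehat{C}(\rz)$, and likewise on $[0,\infty)$.

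Next I would check separation of points and nonvanishing, both of which reduce to the fact that $\widehat{C}(\rz)$ and $\widehat{C}([0,\infty))$ separate points and vanish nowhere (continuous compactly supported bump functions already witness this). Given distinct $(x_1,y_1),(x_2,y_2)\in S$, if $x_1\neq x_2$ choose $g\in\widehat{C}(\rz)$ with $g(x_1)\neq g(x_2)$ and $h\in\widehat{C}([0,\infty))$ with $h(y_1)=h(y_2)=1$, so that $f:=gh\in\mathcal{A}$ gives $f(x_1,y_1)=g(x_1)\neq g(x_2)=f(x_2,y_2)$; if instead $x_1=x_2$ and $y_1\neq y_2$, pick $h$ with $h(y_1)\neq h(y_2)$ and $g$ with $g(x_1)\neq0$, and again $f=gh$ separates the two points. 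For nonvanishing, given $(x,y)$ simply take $g,h$ with $g(x)\neq0$ and $h(y)\neq0$, whence $(gh)(x,y)=g(x)h(y)\neq0$.

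With the subalgebra, separation, and nonvanishing properties established, the $C_0$-form of Stone--Weierstrass delivers the density of $\mathcal{A}$ in $\widehat{C}(S)$, which is exactly the assertion of the lemma. I expect the only genuinely delicate point to be the vanishing-at-infinity verification: one must exploit that escaping every compact subset of the product $S$ forces at least one coordinate to leave its bounded range, so that the decay of the escaping factor combined with the boundedness of the other yields uniform smallness; the separation and nonvanishing steps are routine once the factor spaces are known to have those properties.
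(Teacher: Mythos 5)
Your argument is correct and coincides with the paper's second proof, which likewise verifies that the span of products is a subalgebra of $\widehat{C}(S)$ separating points and vanishing nowhere and then applies the Stone--Weierstrass theorem for $C_0$ of a locally compact space. (The paper additionally offers an elementary constructive proof via partitions of unity, but your route is exactly its Proof 2.)
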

\begin{proof}
   Two proofs are provided. The first proof is elementary and constructive.
   The second proof exploits the Stone--Weierstrass theorem for locally
   compact spaces.

   \vspace{2mm}

   {\bf Proof 1.} (elementary)
   Each $f\in\widehat{C}(S)$ can be transformed (with the additional
   definition $f(\pm\infty,y):=0$ for all $y\in [0,\infty)$ and
   $f(x,\infty):=0$ for all $x\in\rz$) into a map
   $\widetilde{f}\in C([0,1]^2)$ satisfying
   $\widetilde{f}(0,y)=\widetilde{f}(1,y)=\widetilde{f}(x,1)=0$
   for all $x,y\in [0,1]$ via
   \[
   \widetilde{f}(x,y)
   \ :=\ f\bigg(\frac{1}{1-x}-\frac{1}{x},\frac{y}{1-y}\bigg),
   \qquad x,y\in [0,1]^2.
   \]
   Thus, it suffices to verify that the space $D$ of functions
   $f:[0,1]^2\to\rz$ of the form $f(x,y)=\sum_{i=1}^l g_i(x)h_i(y)$
   with $l\in\nz$, $g_1,\ldots,g_l\in D_1:=\{g\in C([0,1]):g(0)=g(1)=0\}$
   and $h_1,\ldots,h_l\in D_2:=\{h\in C([0,1]):h(1)=0\}$ is dense in
   $\{f\in C([0,1]^2):f(0,y)=f(1,y)=f(x,1)=0$ for all $x,y\in [0,1]\}$.
   This is seen as follows. Let $m\in\nz$. For $i\in\{0,\ldots,m\}$
   define $x_i:=i/m$ and $g_i:[0,1]\to [0,1]$ via
   \[
   g_i(x)\ :=\ (1-m|x-x_i|)\,1_{\{|x-x_i|\le 1/m\}},\qquad x\in [0,1].
   \]
   Note that $g_0,\ldots,g_m$ form a partition of unity,
   i.e.~$\sum_{i=0}^m g_i(x)=1$ for all $x\in [0,1]$. Moreover,
   $g_1,\ldots,g_{m-1}\in D_1$. In the same manner define
   $y_j:=j/m$ and $h_j:[0,1]\to [0,1]$ via
   $h_j(y):=(1-m|y-y_j|)1_{\{|y-y_j|\le 1/m\}}$ for all $j\in\{0,\ldots,m\}$.
   Again, $h_0,\ldots,h_m$ form a partition of unity,
   i.e.~$\sum_{j=0}^m h_j(y)=1$ for all $y\in [0,1]$.
   Moreover, $h_0,\ldots,h_{m-1}\in D_2$. Now define $f_m:[0,1]^2\to\rz$ via
   \[
   f_m(x,y)
   \ :=\ \sum_{i,j=0}^m f(x_i,y_j) g_i(x)h_j(y)
   \ =\ \sum_{i=1}^{m-1}\sum_{j=0}^{m-1} f(x_i,y_j) g_i(x)h_j(y),
   \qquad x,y\in [0,1],
   \]
   where the last equality holds since $f(0,y)=f(1,y)=f(x,1)=0$ for all
   $x,y\in [0,1]$. From $g_1,\ldots,g_{m-1}\in D_1$ and
   $h_0,\ldots,h_{m-1}\in D_2$ it follows that $f_m\in D$. It remains
   to verify that $\lim_{m\to\infty}\|f_m-f\|=0$. Let $\varepsilon>0$.
   Since $f$ is uniformly continuous on $[0,1]^2$ there exists
   $\delta=\delta(\varepsilon)>0$ such that $|f(x',y')-f(x,y)|<\varepsilon$
   for all $x,y,x',y'\in [0,1]$ with $|x-x'|<\delta$ and $|y-y'|<\delta$.
   For all $x,y\in [0,1]$ it follows from $\sum_{i,j=0}^m g_i(x)h_j(y)=1$ that
   \[
   |f_m(x,y)-f(x,y)|
   =\bigg|\sum_{i,j=0}^m \big(f(x_i,y_j)-f(x,y)\big)g_i(x)h_j(y)\bigg|
   \le\sum_{i,j=0}^m |f(x_i,y_j)-f(x,y)| g_i(x)h_j(y).
   \]
   Now for each $(x,y)\in [0,1]^2$ there exist $i_0,j_0\in\{0,\ldots,m-1\}$
   (depending on $x$ and $y$) such that $x_{i_0}\le x\le x_{i_0+1}$ and
   $y_{j_0}\le y\le y_{j_0+1}$. Since $g_i(x)=0$ for all
   $i\in\{0,\ldots,m\}\setminus\{i_0,i_0+1\}$ and $h_j(y)=0$ for all
   $j\in\{0,\ldots,m\}\setminus\{j_0,j_0+1\}$ we conclude that
   \begin{eqnarray*}
      |f_m(x,y)-f(x,y)|
      & \le & |f(x_{i_0},y_{i_0})-f(x,y)| + |f(x_{i_0},y_{j_0+1})-f(x,y)|\\
      &     &  + |f(x_{i_0+1},y_{j_0})-f(x,y)| + |f(x_{i_0+1},y_{j_0+1})-f(x,y)|\\
      & \le & 4\varepsilon
   \end{eqnarray*}
   for all $m\in\nz$ with $m>1/\delta$. Thus,
   $\lim_{m\to\infty}\|f_m-f\|=0$.\hfill$\Box$

   \vspace{2mm}

   {\bf Proof 2.} (using the Stone--Weierstrass theorem)
   The space of functions $f:S\to\rz$ of the form
   $f(x,y)=\sum_{i=1}^l g_i(x)h_i(y)$ with $l\in\nz$,
   $g_1,\ldots,g_l\in\widehat{C}(\rz)$ and
   $h_1,\ldots,h_l\in\widehat{C}([0,\infty))$
   is a subalgebra of $\widehat{C}(S)$, which separates points and
   vanishes nowhere, whence is dense in $\widehat{C}(S)$ by the
   Stone--Weierstrass theorem (see, for example, \cite{debranges}). In
   \cite{debranges} the theorem is stated for complex-valued functions,
   but it remains true for real-valued functions. To see this, let
   $f\in\widehat{C}(S)\subseteq\widehat{C}(S,\cz)$ be arbitrary.
   By the theorem there exist $g_1,g_2,\ldots\in\widehat{C}(S,\cz)$
   such that $\lim_{n\to\infty}\|g_n-f\|=0$. Then $f_n:=\mathrm{Re}(g_n)\in
   \widehat{C}(S)$, $n\in\nz$, and $\|f_n -f\|\le\|g_n-f\|\to 0$ as
   $n\to\infty$.\hfill$\Box$
\end{proof}


\footnotesize


\begin{thebibliography}{99}
   \bibitem{abramowitzstegun}
      {\sc Abramowitz, M. and Stegun, I.~A.} (1972)
      {\em Handbook of Mathematical Functions
      with Formulas, Graphs, and Mathematical Tables}.
      9th printing. Dover, New York. \MR{0167642}
   \bibitem{aczel}
      {\sc Aczel, J.} (1966)
      {\em Lectures on Functional Equations and Their Applications}.
      Academic Press, New York. \MR{0208210}
   \bibitem{aldous}
      {\sc Aldous, D.} (1989)
      Stopping times and tightness II.
      {\em Ann. Probab.} {\bf 17}, 586--595. \MR{0985380}
   \bibitem{athreyaney}
      {\sc Athreya, K.~B. and Ney, P.~E.} (1972)
      {\em Branching Processes}.
      Die Grundlehren der Mathematischen Wissenschaften {\bf 196}.
      Springer, New York. \MR{0373040}
   \bibitem{bhatia1}
      {\sc Bhatia, R.} (2006)
      Infinitely divisible matrices.
      {\em Amer. Math. Monthly} {\bf 113}, 221--235. \MR{2204486}
   \bibitem{bhatia2}
      {\sc Bhatia, R.} (2011)
      Min matrices and mean matrices.
      {\em Math. Intelligencer} {\bf 33}, 22--28. \MR{2813259}
   \bibitem{billingsley}
      {\sc Billingsley, P.} (1995)
      {\em Probability and Measure}.
      Third Edition. Wiley. New York. \MR{1324786}
   \bibitem{binghamdoney}
      {\sc Bingham, N.~H. and Doney, R.~A.} (1974)
       Asymptotic properties of supercritical branching processes I:
       The Galton-Watson process.
       {\em Adv. Appl. Probab.} {\bf 6}, 711--731. \MR{0362525}
   \bibitem{binghamgoldieteugels}
      {\sc Bingham, N.~H., Goldie, C.~M., and Teugels, J.~L.} (1987)
      {\em Regular Variation}. Cambridge University Press, Cambridge.
      \MR{0898871}
   \bibitem{bogachevroecknerschmuland}
      {\sc Bogachev, V.~I., R\"ockner, M. and Schmuland, B.} (1996)
      Generalized Mehler semigroups and applications.
      {\em Probab. Theory Relat. Fields} {\bf 105}, 193--225.
      \MR{1392452}
   \bibitem{bojanicseneta}
      {\sc Bojani\'c, R. and Seneta, E.} (1971)
      Slowly varying functions and asymptotic relations.
      {\em J. Math. Anal. Appl.} {\bf 34}, 302--315. \MR{0274676}
   \bibitem{christophschreiber}
      {\sc Christoph, G. and Schreiber, K.} (1998)
      Discrete stable random variables.
      {\em Statist. Probab. Lett.} {\bf 37}, 243--247. \MR{1614930}
   \bibitem{debranges}
      {\sc de Branges, L.}	(1959)
      The Stone--Weierstrass theorem.
      {\em Proc. Amer. Math. Soc.} {\bf 10}, 822--824. \MR{0113131}
   \bibitem{debruijn}
      {\sc de Bruijn, N.~G.} (1959)
      Pairs of slowly oscillating functions occurring in asymptotic
      problems concerning the Laplace transform.
      {\em Nieuw Arch. Wisk.} {\bf 7} , 20--26. \MR{0107120}
   \bibitem{ethierkurtz}
      {\sc Ethier, S.~N. and Kurtz, T.~G.} (1986)
      {\em Markov Processes, Characterization and Convergence}.
      Wiley, New York. \MR{0838085}
   \bibitem{flajoletodlyzko}
      {\sc Flajolet, P. and Odlyzko, A.} (1990)
      Singularity analysis of generating functions.
      {\em SIAM J. Disc. Math.} {\bf 3}, 216--240. \MR{1039294}
   \bibitem{gelukdehaan}
      {\sc Geluk, J.~L. and de Haan, L.} (2000)
      Stable probability distributions and their domains of
      attraction: a direct approach.
      {\em Prob. and Math. Stat.} \textbf{20}, 169--188.
   \bibitem{gnedin}
      {\sc Gnedin, A.} (2010)
      A species sampling model with finitely many types.
      {\em Electron. Commun. Probab.} \textbf{15}, 79--88. \MR{2606505}
   \bibitem{green}
      {\sc Green, P.~J.} (1977)
      Conditional limit theorems for general branching processes.
      {\em J. Appl. Probab.} {\bf 14}, 451--463. \MR{0448596}
   \bibitem{grey}
      {\sc Grey, D.~R.} (1977)
      Almost sure convergence in Markov branching processes with infinite mean.
      {\em J. Appl. Probab.} {\bf 14}, 702--716. \MR{0478377}
   \bibitem{harris}
      {\sc Harris, T.~E.} (1963)
      {\em The Theory of Branching Processes}.
      Die Grundlehren der Mathematischen Wissenschaften {\bf 119},
      Springer, Berlin. \MR{0163361}
   \bibitem{huilletmoehle}
      {\sc Huillet, T. and M\"ohle, M.} (2013)
      On the extended Moran model and its relation to coalescents
      with multiple collisions.
      {\em Theor. Popul. Biol.} \textbf{87}, 5--14.
   \bibitem{iksanovmoehle}
      {\sc Iksanov, A. anf M\"ohle, M.} (2008)
      On the number of jumps of random walks with a barrier.
      {\em Adv. Appl. Probab.} \textbf{40}, 206--228. \MR{2411821}
   \bibitem{kallenberg}
      {\sc Kallenberg, O.} (2002)
      {\em Foundations of Modern Probability}.
      2nd Edition, Probability and Its Applications, Springer, New York.
      \MR{1876169}
   \bibitem{kuklamoehle}
      {\sc Kukla, J. and M\"ohle, M.} (2018)
      On the block counting process and the fixation line of
      the Bolthausen--Sznitman coalescent.
      {\em Stoch. Process. Appl.} {\bf 128}, 939--962. \MR{3758343}
   \bibitem{lamperti3}
	   {\sc Lamperti, J.} (1958)
	   An occupation time theorem for a class of stochastic processes.
	   \emph{Trans. Amer. Math. Soc.} {\bf 88}, 380--387 \MR{0094863}
   \bibitem{lamperti1}
      {\sc Lamperti, J.} (1967a)
      Continuous state branching processes.
      {\em Bull. Amer. Math. Soc.} {\bf 73}, 382--386. \MR{0208685}
   \bibitem{lamperti2}
      {\sc Lamperti, J.} (1967b)
      The limit of a sequence of branching processes.
      {\em Z. W. verw. Geb.} {\bf 7}, 271--288. \MR{0217893}
   \bibitem{mattilahaukkanen1}
      {\sc Mattila, M. and Haukkanen, P.} (2014)
      On the positive definiteness and eigenvalues of meet and join matrices.
      {\em Discrete Math.} {\bf 326}, 9--19. \MR{3188982}
   \bibitem{mattilahaukkanen2}
      {\sc Mattila, M. and Haukkanen, P.} (2016)
      Studying the various properties of MIN and MAX matrices - elementary
      vs. more advanced methods.
      {\em Spec. Matrices} {\bf 4}, 101--109. \MR{3451273}
   \bibitem{neveu}
      {\sc Neveu, J.} (1992)
      A continuous-state branching process in relation with the GREM model
      of spin glass theory. Rapport interne no 267, \'Ecole Polytechnique.
   \bibitem{pakes}
      {\sc Pakes, A.~G.} (1997)
      On the recognition and structure of probability generating functions.
      {\em Classical and Modern Branching Processes},
      {\em IMA Vol. Math. Appl.} {\bf 84}, Springer, New York,
      263--284. \MR{1601686}
   \bibitem{pitman1}
      {\sc Pitman, J.} (1997)
      Partition structures derived from Brownian motion and
      stable subordinators.
      {\sc Bernoulli} \textbf{3}, 79--96. \MR{1466546}
   \bibitem{pitman2}
      {\sc Pitman, J.} (2006)
      {\em Combinatorial Stochastic Processes}.
      Lecture Notes in Mathematics {\bf 1875}, Springer. \MR{2245368}
   \bibitem{polya}
      {\sc P\'olya, G.} (1920) \"Uber den zentralen Grenzwertsatz der
      Wahrscheinlichkeitsrechnung und das Momentenproblem.
      {\em Math. Z.} {\bf 8}, 171--181. \MR{1544437}
   \bibitem{revuzyor}
      {\sc Revuz, D. and Yor, M.} (2005)
      {\em Continuous Martingales and Brownian Motion}.
      Grundlehren der mathematischen Wissenschaften {\bf 293},
      Corrected Third Printing of the Third Edition.
      Springer, Berlin. \MR{1725357}
   \bibitem{sagitov1}
      {\sc Sagitov, S.} (1995)
      A key limit theorem for critical branching processes.
      {\em Stoch. Process. Appl.} {\bf 56}, 87--100. \MR{1324323}
   \bibitem{shunxiangroeckner}
      {\sc Shun-Xiang, O. and R\"ockner, M.} (2016)
      Time inhomogeneous generalized Mehler semigroups and skew
      convolution equations.
      {\em Forum Math.} {\bf 28}, 339--376. \MR{3466574}
   \bibitem{slack}
      {\sc Slack, R.~S.} (1972)
      Further notes on branching processes with mean 1.
      {\em Z. Wahrscheinlichkeitstheorie und verw. Gebiete} {\bf 25}, 31--38. \MR{0331539}
   \bibitem{vandervaart}
      {\sc van der Vaart, A.~W.} (1998)
      {\em Asymptotic Statistics}, Cambridge Series in Statistical and
      Probabilistic Mathematics 3, Cambridge University Press, Cambridge.
      \MR{1652247}
   \bibitem{vatutin2}
      {\sc Vatutin, V.~A.} (1979)
      A new limit theorem for a critical Bellman-Harris branching process.
      {\em Mat. Sb.} {\bf 109}, 440--452. \MR{0542812}
\end{thebibliography}
\end{document}